\newcommand{\Rd}{{\mathbb{R}^d}}
\newcommand{\calH}{{\mathcal{H}}}
\newcommand{\eps}{\varepsilon}
\newcommand{\Rdrei}{{\mathbb{R}^3}}
\newcommand{\W}{\mathbf{W}}
\newcommand{\m}{\mathbf{m}}
\newcommand{\N}{{\mathbb{N}}}
\newcommand{\R}{{\mathbb{R}}}
\newcommand{\flow}[2]{\mathsf{S}_{#1}^{#2}}
\newcommand{\eins}{\mathds{1}}
\newcommand{\bphi}{\overline{\phi''}}
\newcommand{\dd}{\,\mathrm{d}}
\newcommand{\dn}{\mathrm{d}}
\newcommand{\dv}{\mathrm{div}}
\newcommand{\dff}{\mathrm{D}}
\newcommand{\grd}{\nabla}
\newcommand{\dist}{\mathrm{\mathbf{dist}}}
\newcommand{\prb}{\mathscr{P}_2}
\newcommand{\prbb}{\mathscr{P}}
\newcommand{\auxil}{\mathfrak{A}}
\newcommand{\metr}{\mathbf{d}}
\newcommand{\theX}{\mathbf{X}}
\newcommand{\lyp}{\mathcal{L}}
\newcommand{\aU}{\mathcal{U}}
\newcommand{\aV}{\mathcal{V}}
\newcommand{\ent}{\mathcal{E}}
\newcommand{\krnl}{\mathbf{G}}
\newcommand{\dsp}{\mathcal{D}}
\newcommand{\yucca}{\mathbf{Y}}
\newcommand{\heat}{\mathbf{H}}
\newcommand{\bv}{\hat v}
\newtheoremstyle{meinstyle_satz}
{}
{}
{\itshape}
{}
{}
{}
{1ex}
{\textbf{\thmname{#1}}\thmnumber{ \textbf{#2}}\textbf{.} (\textit{\thmnote{#3}})\newline}
\newtheoremstyle{meinstyle_rest}
{}
{}
{}
{}
{}
{}
{1ex}
{\textbf{\thmname{#1}}\thmnumber{ \textbf{#2}}\textbf{.} (\textit{\thmnote{#3}})\newline}
\newtheorem{definition}{Definition}[section]
\newtheorem{remark}[definition]{Remark}
\newtheorem{lemma}[definition]{Lemma}
\newtheorem{thm}[definition]{Theorem}
\newtheorem{prop}[definition]{Proposition}
\newtheorem{coro}[definition]{Corollary}
\begin{document}

\begin{abstract}
 We study a system of two coupled nonlinear parabolic equations.
 It constitutes a variant of the Keller-Segel model for chemotaxis, i.e.
 it models the behaviour of a population of bacteria that interact by means of a signalling substance.
 We assume an external confinement for the bacteria and a nonlinear dependency of the chemotactic drift on the signalling substance concentration.

 We perform an analysis of existence and long-time behaviour of solutions based on the underlying gradient flow structure of the system.
 The result is that, for a wide class of initial conditions, weak solutions exist globally in time
 and converge exponentially fast to the unique stationary state under suitable assumptions on the convexity of the confinement 
 and the strength of the coupling.
\end{abstract}

\title[Convergence to equilibrium]{Exponential convergence to equilibrium in a coupled gradient flow system modelling chemotaxis}
\author[Jonathan Zinsl]{Jonathan Zinsl}
\author[Daniel Matthes]{Daniel Matthes}
\address{Zentrum f\"ur Mathematik \\ Technische Universit\"at M\"unchen \\ 85747 Garching, Germany}
\email{zinsl@ma.tum.de}
\email{matthes@ma.tum.de}
\thanks{This research was supported by the German Research Foundation (DFG), Collaborative Research Center SFB-TR 109.}
\keywords{Gradient flow, Wasserstein metric, chemotaxis}
\date{\today}
\subjclass[2010]{Primary:
35K45
; Secondary:
35A15,
35B40,
35D30,
35Q92}
\maketitle

\section{Introduction}\label{sec:intro}
\subsection{The equations and their variational structure}
This paper is concerned with existence and long-time behaviour of weak nonnegative solutions to the following initial value problem:
\begin{align}
 \label{eq:pde_u}
 \partial_t u(t,x)&=\mathrm{div}(u(t,x)\dff\left[u(t,x)+W(x)+\eps \phi(v(t,x))\right]),
 \\
 \label{eq:pde_v}
 \partial_t v(t,x)&=\Delta v(t,x)-\kappa v(t,x)-\eps u(t,x)\phi'(v(t,x)),
 \\
 \label{eq:init}
 &u(0,x) = u_0(x) \ge 0,\quad v(0,x) = v_0(x)\ge0,
\end{align}
where the sought functions $u$ and $v$ are defined for $(t,x)\in[0,\infty)\times \Rdrei$.
Below, we comment in detail on the origin of \eqref{eq:pde_u}\&\eqref{eq:pde_v} from mathematical biology.
In brief, $u$ is the spatial density of bacteria that interact with each other by means of a signalling substance
of local concentration $v$.

In \eqref{eq:pde_u}\&\eqref{eq:pde_v}, $\eps$ and $\kappa$ are given positive constants;
we are mainly concerned with the case where the \emph{coupling strength} $\eps$ is sufficiently small. Strict positivity of $\kappa$ is essential for our approach, as explained below.
The \emph{response function} $\phi\in C^2([0,\infty))$ is assumed to be convex and strictly decreasing,
with
\begin{align}
 \label{eq:assumpphi}
 0<-\phi'(w)\le-\phi'(0)<\infty\ \text{and}\ 0\le\phi''(w)\le\bphi<\infty \quad\text{for all $w\ge0$},
\end{align}
for an appropriate constant $\bphi\ge 0$,
the paradigmatic examples being
\begin{align}
 \phi(w)&=-w\qquad\text{(classical Keller-Segel model)},\label{eq:classicalKS}\\
 \phi(w)&=-\log(1+w)\qquad\text{(weak saturation effect)},\\
 \phi(w) &= \frac{1}{1+w}\qquad\text{(strong saturation effect)}.\label{eq:paradigm}
\end{align}
The \textit{external potential} $W\in C^2(\Rdrei)$ is assumed to grow quadratically: It has globally bounded second order partial derivatives and is uniformly convex with a constant $\lambda_0>0$,
that is
\begin{align}
 \label{eq:assumpW}
 \dff^2W(x)\ge\lambda_0\eins \quad \text{for all $x\in\Rdrei$, in the sense of symmetric matrices}.
\end{align}
Without loss of generality, we may assume that $W\ge0$.
\medskip

Equations \eqref{eq:pde_u}\&\eqref{eq:pde_v} possess a variational structure.
Formally, they can be written as a gradient flow of the entropy functional
\begin{align*}
 \calH(u,v)&:=\int_\Rdrei \left(\frac{1}{2}u^2+uW+\frac{1}{2}|\dff v|^2+\frac{\kappa}{2}v^2+\eps u\phi(v)\right)\dd x
\end{align*}
with respect to a metric $\dist$, defined on the space $\theX:=\prb(\Rdrei)\times L^2_+(\Rdrei)$ by
\begin{align}
  \label{eq:dist}
  \dist((u_1,v_1),(u_2,v_2))&:=\sqrt{\W_2^2(u_1,u_2)+\|v_1-v_2\|_{L^2(\Rdrei)}^2}
  \quad \text{for $(u_1,v_1),(u_2,v_2)\in\theX$}.
\end{align}
Here $\W_2$ is the \emph{$L^2$-Wasserstein metric} on the space $\prb(\Rdrei)$ of probability measures on $\Rdrei$ with finite second moment,
see Section \ref{subsec:wasser} for the definition.
This gradient flow structure is at the basis of our proof for global existence of weak solutions to \eqref{eq:pde_u}--\eqref{eq:init},
and it is also the key element for our analysis of long-time behaviour.
We remark that, even with this variational structure at hand, the analysis is far from trivial,
since $\calH$ is \emph{not} convex along geodesics. 
Therefore, the established general theory on $\lambda$-contractive gradient flows in metric spaces \cite{savare2008} is not directly applicable.

\subsection{Statement of the main results}
In the first part of this work, we show that a weak solution to \eqref{eq:pde_u}\&\eqref{eq:pde_v} can be obtained 
by means of the time-discrete implicit Euler approximation (also known as \emph{minimizing movement} or \emph{JKO scheme}). More precisely,
for each sufficiently small time step $\tau>0$,
let $(u_\tau^0,v_\tau^0):=(u_0,v_0)$, and then define inductively for each $n\in\N$:
\begin{align}
  \label{eq:jko}
  (u_\tau^n,v_\tau^n) \in \operatorname*{argmin}_{(u,v)\in\prb(\Rdrei)\times L^2(\Rdrei)} \Big(\frac1{2\tau}\dist\big((u,v),(u_\tau^{n-1},v_\tau^{n-1})\big)^2 + \calH(u,v)\Big).
\end{align}
We will prove in Section \ref{subsec:discr_constr} that this construction is well-defined, i.e., that a minimizer exists for every $n\in\N$.
Further, introduce the piecewise constant interpolation $(u_\tau,v_\tau):\R_+\to\prb(\Rdrei)\times L^2(\Rdrei)$ by
\begin{align}
  \label{eq:interpol}
  u_\tau(t) = u_\tau^n,\quad v_\tau(t) = v_\tau^n \quad \text{for all $t\in((n-1)\tau,n\tau]$}.
\end{align}
Our existence result -- which does not require a small coupling strength $\eps$ -- reads as follows:
\begin{thm}[Existence of weak solutions to \eqref{eq:pde_u}\&\eqref{eq:pde_v}]
  \label{thm:existence}
  Let $\kappa>0$ and $\eps>0$ be given,
  assume that the response function $\phi$ satisfies \eqref{eq:assumpphi},
  and that the convex confinement potential $W$ grows quadratically.
  
  Let further initial conditions $u_0\in\prb(\Rdrei)\cap L^2(\Rdrei)$ and $v_0\in W^{1,2}(\Rdrei)$ be given, with $v_0\ge0$,
  and define for each $\tau>0$ a function $(u_\tau,v_\tau)$ by means of the scheme \eqref{eq:jko} and \eqref{eq:interpol}.
  Then, there is a sequence $(\tau_k)_{k\in\N}$ with $\tau_k\downarrow0$ such that $(u_{\tau_k},v_{\tau_k})$ converges to a weak solution $(u,v):[0,\infty)\times\Rdrei\to [0,\infty]^2$ of \eqref{eq:pde_u}--\eqref{eq:init}, in the following sense:
  \begin{align*}
	u_{\tau_k}&\to u \text{ narrowly in $\mathscr{P}(\Rdrei)$ pointwise with respect to $t\in[0,T]$},\\
	v_{\tau_k}&\to v \text{ in $L^2(\Rdrei)$ uniformly with respect to $t\in[0,T]$},\\
    u&\in C^{1/2}([0,T],\prb(\Rdrei))\cap L^\infty([0,T],L^2(\Rdrei))\cap L^2([0,T],W^{1,2}(\Rdrei)),\\
    v&\in C^{1/2}([0,T],L^2(\Rdrei))\cap L^\infty([0,T],W^{1,2}(\Rdrei))\cap L^2([0,T],W^{2,2}(\Rdrei))\cap W^{1,2}([0,T],L^2(\Rdrei))
  \end{align*}
  for all $T>0$, 
  and $(u,v)$ satisfies:
  \begin{align}
   \label{eq:pde_weak1}
  \partial_t u &= \dv\big( u\dff\big[u+W+\eps\phi(v)\big]\big)\quad\text{ in the sense of distributions},
   \\
   \label{eq:pde_weak2}
  \partial_t v &= \Delta v-\kappa v-\eps u\phi'(v) \quad\text{ a.e. in $(0,+\infty)\times\Rdrei$},
\\
u(0)&=u_0\text{ and }v(0)=v_0.
 \end{align}
\end{thm}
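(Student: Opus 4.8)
The plan is to run the classical minimizing movement (JKO) machinery adapted to the product metric $\dist$ on $\theX$, extract a limit curve via refined Aubin--Lions type compactness, and then pass to the limit in the Euler--Lagrange (optimality) conditions of the scheme to recover the two PDEs. First I would establish the \emph{a priori estimates} coming directly from \eqref{eq:jko}: comparing the minimizer with the previous iterate gives, after summation over $n$, the energy--dissipation inequality $\calH(u_\tau^N,v_\tau^N)+\tfrac1{2\tau}\sum_{n=1}^N\dist((u_\tau^n,v_\tau^n),(u_\tau^{n-1},v_\tau^{n-1}))^2\le\calH(u_0,v_0)$. Here the key point is that $\calH$ is bounded below on $\theX$: the potentially dangerous coupling term $\eps u\phi(v)$ is controlled because $\phi$ is convex with $\phi'$ bounded, so $|\phi(v)|\le|\phi(0)|+|\phi'(0)|\,v$, and the quadratic terms $\tfrac12u^2$, $\tfrac\kappa2 v^2$ (with $\kappa>0$ essential here) together with $uW\ge0$ absorb this; one also gets uniform bounds on $\|u_\tau\|_{L^\infty_tL^2_x}$, on $\|v_\tau\|_{L^\infty_tW^{1,2}_x}$, and on the second moment of $u_\tau$. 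The discrete dissipation bound then yields the classical $\tfrac12$-Hölder estimate in the metric $\dist$ for the interpolants, hence equicontinuity in $t$ for both $\W_2$ and $L^2$.

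Next I would derive the \emph{flow interchange / finer regularity} estimates. To get $u_\tau$ bounded in $L^2_t W^{1,2}_x$ and $v_\tau$ bounded in $L^2_t W^{2,2}_x\cap W^{1,2}_tL^2_x$ I would use the flow-interchange technique of Matthes--McCann--Savaré: differentiating $\calH$ along the heat semigroup (for the $v$-component) and along the $\W_2$-gradient flow of $\int\tfrac12u^2+uW$ (for the $u$-component) and using geodesic $\lambda$-convexity of these auxiliary functionals produces, respectively, control of $\int|\Delta v|^2$ and of the Fisher-information-type quantity $\int u|\dff(u+W+\eps\phi(v))|^2$, from which the stated Sobolev bounds follow after handling the cross terms (here the bounds on $\phi',\phi''$ and on $\dff^2W$ enter). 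The $v$-equation is semilinear and I expect it to be the easier of the two: $v_\tau$ solves a discrete-in-time approximation of $\partial_t v=\Delta v-\kappa v-\eps u\phi'(v)$, and standard parabolic estimates plus the uniform bound on $u_\tau$ give strong $L^2_{t,x}$ compactness of $v_\tau$ (indeed uniform-in-$t$ $L^2_x$ convergence along a subsequence), which in turn makes $\phi(v_{\tau_k})$ and $\phi'(v_{\tau_k})$ converge strongly — crucial for the nonlinear coupling terms.

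Then I would pass to the limit. Using the equicontinuity in $t$ and the compactness in $x$ (second moment bound gives narrow/$\W_2$ compactness for $u_\tau$ at each fixed $t$; a refined Arzelà--Ascoli argument upgrades to convergence uniform in $t$ on $[0,T]$), extract $\tau_k\downarrow0$ with $u_{\tau_k}\to u$, $v_{\tau_k}\to v$ in the senses claimed, and with the weak limits $u\in L^2_tW^{1,2}_x$, $v\in L^2_tW^{2,2}_x\cap W^{1,2}_tL^2_x$, $v\ge0$ preserved (nonnegativity of $v$ from the maximum-principle structure of its equation, nonnegativity of $u$ automatic from $\prb$). The $v$-equation \eqref{eq:pde_weak2} then holds a.e.\ by taking limits in the discrete equation, using strong convergence of $v_{\tau_k}$ and of $u_{\tau_k}$ (the latter e.g.\ weakly in $L^2_{t,x}$ suffices since $\phi'(v_{\tau_k})$ converges strongly in $L^2_{t,x}$). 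For \eqref{eq:pde_weak1} I would write the Euler--Lagrange equation of the $u$-minimization: for every test vector field $\zeta\in C_c^\infty(\Rdrei;\Rdrei)$, the first variation along the flow generated by $\zeta$ gives $\int u_\tau^n\,\zeta\cdot\dff[u_\tau^n+W+\eps\phi(v_\tau^n)]=-\tfrac1\tau\int(\text{optimal transport displacement})\cdot\zeta+O(\tau)$; summing against a time test function, using the $\W_2$-Hölder bound to identify the discrete velocity, and using the strong convergences above, yields the weak formulation of \eqref{eq:pde_u}. Finally $u(0)=u_0$, $v(0)=v_0$ follow from the $t$-continuity. The main obstacle I anticipate is the flow-interchange step for the $u$-component: $\calH$ is not geodesically convex (as the authors stress), so I would need to choose the auxiliary gradient flow carefully, quantify the "error" terms produced by the non-convex part $\eps\int u\phi(v)$ using the assumed bounds on $\phi$ and a Young-type absorption, and make sure these estimates are uniform in $\tau$ — this is where most of the technical work lies, whereas the $v$-equation and the final limit passage are comparatively standard.
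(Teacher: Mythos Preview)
Your overall strategy is correct and matches the paper's: JKO scheme, basic energy estimates and Hölder-in-time bounds, flow interchange for higher spatial regularity, Rossi--Savaré/Aubin--Lions compactness, then passage to the limit via the Euler--Lagrange conditions (push-forward perturbation for $u$, linear perturbation for $v$). The lower bound on $\calH$, the discrete equicontinuity, and the treatment of the $v$-equation are all fine.

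The one substantive gap is your choice of auxiliary functional in the flow interchange for the $u$-component. You propose the $\W_2$-gradient flow of $\int(\tfrac12 u^2 + uW)$, i.e.\ the porous medium equation with drift. The dissipation of $\calH$ along this flow is $\int u\,\dff(u+W)\cdot\dff\big(u+W+\eps\phi(v)\big)\dd x$, whose principal part is $\int u|\dff(u+W)|^2\dd x\sim\|\dff u^{3/2}\|_{L^2}^2$, \emph{not} $\|\dff u\|_{L^2}^2$ (and not the quantity $\int u|\dff(u+W+\eps\phi(v))|^2\dd x$ you wrote, which is the metric slope of $\calH$ itself, not what flow interchange produces here). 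From this you do not get $u\in L^2([0,T],W^{1,2}(\Rdrei))$ as stated in the theorem. There is also a technical obstacle: the porous medium flow is degenerate parabolic, so differentiating $\calH$ along it needs justification --- indeed, when the paper uses a functional of this type later in the long-time analysis, it first regularizes by adding $\nu\int u\log u$. The paper's actual choice for the existence proof is the Boltzmann entropy $\ent(u)=\int u\log u\,\dd x$, whose $\W_2$-gradient flow is the \emph{linear} heat equation; differentiating $\calH$ along the heat flow gives directly
\[
-\frac{\dn}{\dn s}\calH(\aU_s,v)=\int_{\Rdrei}\Big(|\dff\aU_s|^2+\aU_s\Delta W+\eps\phi'(v)\,\dff v\cdot\dff\aU_s\Big)\dd x,
\]
whose leading term is exactly $\|\dff u\|_{L^2}^2$, and the rest is absorbed by Young's inequality using $\|\Delta W\|_{L^\infty}<\infty$ and $|\phi'|\le|\phi'(0)|$. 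Summing over steps and telescoping $\ent$ (with $|\ent(u)|\le C(\|u\|_{L^2}^2+\m_2(u)+1)$) then yields the uniform bound $u_\tau\in L^2_tW^{1,2}_x$. With this single replacement your plan goes through and coincides with the paper's proof.
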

The convergence of $(u_{\tau_k},v_{\tau_k})$ is actually much stronger; see Proposition \ref{prop:conv_disc} for details.
%
The key \textit{a priori} estimate yielding sufficient compactness of $(\bar u_\tau,\bar v_\tau)$ follows from a dissipation estimate, 
which formally amounts to
\begin{align*}
  -\frac{\dn}{\dn t}\int_\Rdrei \Big(u\log u + \frac12|\dff v|^2 + \frac\kappa2v^2\Big)\dd x
  &\ge \frac12 \int_{\Rdrei} \big(|\dff u|^2 + (\Delta v-\kappa v)^2\big)\dd x \\
  & \qquad - C\big(\|u\|_{L^2(\Rdrei)}^2 + \|v\|_{W^{1,2}(\Rdrei)}^2 + \|\Delta W\|_{L^\infty(\Rdrei)}\big).
\end{align*}
Related existence results have been proved recently for similar systems of equations, using essentially the same technique,
in \cite{laurencot2011,blanchet2012,zinsl2012}.
Therefore, we keep the technical details to a minimum.

Our main result is the following on the long-time behaviour of solutions.
\begin{thm}[Exponential convergence to equilibrium]\label{thm:convergence}
  Let $\kappa$, $\phi$ and $W$ be as in Theorem \ref{thm:existence} above.
  Then there are constants $\bar\eps>0$, $L>0$ and $C>0$ such that
  for every $\eps\in(0,\bar\eps)$ and with $\Lambda_\eps:=\min(\kappa,\lambda_0)-L\eps$,
  the following is true:

  Let initial conditions $u_0\in\prb(\Rdrei)\cap L^2(\Rdrei)$ and $v_0\in W^{1,2}(\Rdrei)$ be given, with $v_0\ge0$,
  and assume in addition that $v_0\in L^{6/5}(\Rdrei)$.
  Let further $(u,v)$ be a weak solution to \eqref{eq:pde_u}--\eqref{eq:init} obtained as a limit of the scheme \eqref{eq:jko} and \eqref{eq:interpol}.
  Then $(u,v)$ converges to the unique nonnegative stationary solution $(u_\infty,v_\infty)\in(\prb\cap L^2){(\Rdrei)}\times W^{1,2}(\Rdrei)$ of \eqref{eq:pde_u}\&\eqref{eq:pde_v} 
  exponentially fast with rate $\Lambda_\eps$ in the following sense:
  \begin{align}
    \label{eq:theclaim}
    \W_2(u(t,\cdot),u_\infty) &+ \|u(t,\cdot)-u_\infty\|_{L^2(\Rdrei)} + \|v(t,\cdot)-v_\infty\|_{W^{1,2}(\Rdrei)} \nonumber\\
    &\le C\left(1+\|v_0\|_{L^{6/5}(\Rdrei)}\right)\left(\calH(u_0,v_0)-\calH(u_\infty,v_\infty)+1\right)e^{-\Lambda_\eps t} \qquad \text{for all $t\ge0$}.
  \end{align}
\end{thm}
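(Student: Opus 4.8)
The plan is to exploit the gradient flow structure and derive an entropy–entropy dissipation inequality that is coercive for small $\eps$. First I would identify the stationary state: setting the right-hand sides of \eqref{eq:pde_u}\&\eqref{eq:pde_v} to zero, $u_\infty$ must satisfy $u_\infty + W + \eps\phi(v_\infty) = \mathrm{const}$ on $\{u_\infty>0\}$ (so $u_\infty = (C - W - \eps\phi(v_\infty))_+$), while $v_\infty$ solves the linear elliptic equation $-\Delta v_\infty + \kappa v_\infty = -\eps u_\infty \phi'(v_\infty)$. Existence and uniqueness of $(u_\infty,v_\infty)$ for small $\eps$ follows from a fixed-point argument: the map sending $v$ to the minimiser $u[v]$ of $\int(\tfrac12 u^2 + uW + \eps u\phi(v))$ over $\prb\cap L^2$, composed with the solution operator of the elliptic problem, is a contraction on a suitable ball in $W^{1,2}$ once $\eps$ is small (using the bounds \eqref{eq:assumpphi} on $\phi',\phi''$ and $\lambda_0$-convexity of $W$). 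Equivalently, $(u_\infty,v_\infty)$ is the unique minimiser of $\calH$ on $\theX$; uniqueness comes from the fact that, although $\calH$ is not geodesically convex, the "bad" term $\eps\int u\phi(v)$ is a small perturbation of a strictly (displacement) convex functional.

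Next I would establish the dissipation inequality along the flow. Formally, differentiating $\calH(u(t),v(t)) - \calH(u_\infty,v_\infty)$ along the system and using the gradient-flow identity $\frac{\dn}{\dn t}\calH = -|\partial\calH|^2$ (the metric slope squared), one gets
\begin{align*}
  \frac{\dn}{\dn t}\big(\calH(u,v)-\calH(u_\infty,v_\infty)\big)
  = -\int_\Rdrei u\,\big|\dff[u+W+\eps\phi(v)]\big|^2\dd x - \int_\Rdrei \big|\Delta v - \kappa v - \eps u\phi'(v)\big|^2\dd x.
\end{align*}
The task is then to show this dissipation controls $\calH(u,v)-\calH(u_\infty,v_\infty)$ from above by a constant times itself, i.e. an inequality of the form $\mathcal{D}(u,v) \ge 2\Lambda_\eps\,(\calH(u,v)-\calH(u_\infty,v_\infty))$. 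For $\eps=0$ this decouples: the $u$-part is the Wasserstein gradient flow of $\int(\tfrac12 u^2 + uW)$, which is $\lambda_0$-convex, giving the bound with rate $\lambda_0$; the $v$-part is the $L^2$-gradient flow of $\int(\tfrac12|\dff v|^2 + \tfrac\kappa2 v^2)$, which is $\min(1,\kappa)$-convex — actually the relevant rate here is $\kappa$ since the Poincaré-type inequality $\|\Delta v-\kappa v\|_{L^2}^2 \ge \kappa\,\|v-v_\infty\|_{W^{1,2}}^2\cdot(\text{const})$ holds. For $\eps>0$ I would treat the coupling terms as perturbations: expand the squares, bound the cross terms using \eqref{eq:assumpphi}, Young's inequality, and the regularity bounds $u\in L^\infty_t L^2_x\cap L^2_t W^{1,2}_x$, $v\in L^\infty_t W^{1,2}_x$ from Theorem \ref{thm:existence}; this produces the loss term $-L\eps$ in $\Lambda_\eps$ and fixes $\bar\eps$ so that $\Lambda_\eps>0$. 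The auxiliary hypothesis $v_0\in L^{6/5}(\Rdrei)$ (together with $d=3$, where $L^{6/5}$ is dual to $L^6 = $ the Sobolev-critical space for $W^{1,2}$) is what is needed to propagate an $L^{6/5}$ bound on $v(t)$ and hence close the estimate on the term $\eps\int u\phi'(v)$ uniformly in time — this explains the prefactor $1+\|v_0\|_{L^{6/5}}$.

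From the differential inequality $\frac{\dn}{\dn t}(\calH - \calH_\infty) \le -2\Lambda_\eps(\calH-\calH_\infty)$, Grönwall gives exponential decay of the relative entropy: $\calH(u(t),v(t)) - \calH(u_\infty,v_\infty) \le e^{-2\Lambda_\eps t}(\calH(u_0,v_0)-\calH(u_\infty,v_\infty))$. Finally I would convert this back to the stated metric estimate \eqref{eq:theclaim} by proving a "generalized convexity" or coercivity lower bound $\calH(u,v)-\calH(u_\infty,v_\infty) \ge c\big(\W_2(u,u_\infty)^2 + \|u-u_\infty\|_{L^2}^2 + \|v-v_\infty\|_{W^{1,2}}^2\big)$ for small $\eps$, which again follows from the $\eps=0$ displacement-convexity plus a perturbation argument, then taking square roots (the $e^{-\Lambda_\eps t}$, not $e^{-2\Lambda_\eps t}$, in the conclusion reflects this square root). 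All of this must be carried out rigorously at the level of the JKO time-discretisation — i.e. prove a discrete analogue of the dissipation inequality for the scheme \eqref{eq:jko} using the minimality of $(u_\tau^n,v_\tau^n)$ and "flow interchange"/variational interpolation estimates, then pass to the limit $\tau\to 0$ using the convergence from Theorem \ref{thm:existence}. The main obstacle, I expect, is precisely the lack of geodesic convexity of $\calH$: making the perturbation argument for the dissipation inequality work at the discrete level requires careful "flow interchange" estimates (differentiating $\calH$ along an auxiliary gradient flow, e.g. the heat flow or the $\eps=0$ flow, evaluated at the JKO minimisers) and controlling the resulting error terms by the regularity bounds uniformly in $\tau$; getting the $L^{6/5}$-based bound on $v$ to be uniform in both $t$ and $\tau$ is the delicate point.
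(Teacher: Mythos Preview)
Your outline captures the right philosophy --- perturb the convex $\eps=0$ situation, work at the discrete level via flow interchange, and use the $L^{6/5}$ hypothesis to close the estimates --- but it misses two structural features of the actual argument that are not mere technicalities.

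First, the paper does \emph{not} use $\calH-\calH_\infty$ as the Lyapunov functional. Instead it decomposes $\calH-\calH_\infty=\lyp_u(u)+\lyp_v(v)+\eps\lyp_*(u,v)$ and works with the \emph{auxiliary entropy} $\lyp=\lyp_u+\lyp_v$, where $\lyp_u(u)=\int\big(\tfrac12(u^2-u_\infty^2)+W_\eps(u-u_\infty)\big)$ with the perturbed potential $W_\eps=W+\eps\phi(v_\infty)$, and $\lyp_v(v)=\tfrac12\|v-v_\infty\|_{W^{1,2}_\kappa}^2$. The point is that $\lyp_u$ is genuinely $\lambda_\eps$-geodesically convex in $\W_2$ and $\lyp_v$ is $\kappa$-convex in $L^2$, so flow interchange against \emph{their} gradient flows (not against the heat flow or the full $\calH$-flow) gives clean dissipation terms $\int u|\dff(u+W_\eps)|^2$ and $\int(\Delta\hat v-\kappa\hat v)^2$, perturbed by $\eps$-small cross terms. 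Your proposed dissipation of $\calH$ along its own flow mixes the coupling into both factors and does not separate as cleanly.

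Second, and more importantly, the entropy--entropy production inequality you aim for, $\mathcal{D}\ge 2\Lambda_\eps(\calH-\calH_\infty)$, does \emph{not} hold globally. The bad cross term is $\eps\int u|\dff(\phi(v)-\phi(v_\infty))|^2$, which after H\"older requires control of $\|u\|_{L^3}$ (absorbed into the main dissipation via \eqref{eq:u3}) and of $\|\dff(v-v_\infty)\|_{L^3}$. The latter is handled by interpolating $\|\dff\hat v\|_{L^3}\lesssim\|\hat v\|_{W^{2,2}}^{3/4}\|\dff\hat v\|_{L^{6/5}}^{1/4}$, so what is actually needed is a bound on $\|\dff v\|_{L^{6/5}}$, not on $\|v\|_{L^{6/5}}$, and this quantity is \emph{not} controlled by the entropy. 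The paper obtains it by a completely separate mechanism: a semi-explicit Duhamel representation $v_\tau^n=(1+\kappa\tau)^{-n}\yucca_\sigma^n*v_0+\tau\sum_m(1+\kappa\tau)^{-m}\yucca_\sigma^m*f_\tau^{n+1-m}$ with iterated Bessel kernels $\yucca_\sigma^k$, together with the smoothing estimate $\|\dff\yucca_\sigma^k\|_{L^q}\lesssim(\sigma k)^{-Q}$. This yields $\|\dff v_\tau^n\|_{L^{6/5}}\le a\|v_0\|_{L^{6/5}}e^{-[\kappa]_\tau n\tau}(n\tau)^{-1/2}+\eps M_1$, hence a uniform bound $2M_1$ after an explicit time $T_1$.

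Even with this bound in hand, one still does not get a clean Gronwall inequality: the dissipation estimate reads $(1+2\Lambda'_\eps\tau)\lyp^n\le\lyp^{n-1}+\tau\eps M'$ with an \emph{additive} error. This only gives boundedness $\lyp\le 2M_2$ after a further time $T_2$. Once $\lyp$ is bounded by a system constant, $\|u\|_{L^2}$ is bounded and the cross term can be estimated differently (via $\|u\|_{L^2}\|\dff\hat v\|_{L^4}^2$), finally yielding the clean inequality $(1+2\Lambda''_\eps\tau)\lyp^n\le\lyp^{n-1}$ and hence true exponential decay. The final constant in \eqref{eq:theclaim} is assembled by stitching the three time regimes $[0,T_1]$, $[T_1,T_2]$, $[T_2,\infty)$ together. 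Your single-step Gronwall argument would get stuck precisely because the perturbation terms are not small relative to the dissipation unless one already knows $\|\dff v\|_{L^{6/5}}$ and $\lyp$ are bounded by system constants.
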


We give a brief and formal indication of the main idea for the proof of Theorem \ref{thm:convergence}.
First, we decompose the entropy in the form
\begin{align}
 \label{eq:decompose}
 \calH(u,v)-\calH(u_\infty,v_\infty)  = \lyp_u(u) + \lyp_v(v) + \eps\lyp_*(u,v),
\end{align}
such that $\lyp_u$ and $\lyp_v$ are $\lambda_\eps$-convex and $\kappa$-convex functionals in $(\prb,\W_2)$ and in $L^2$, respectively,
which are minimized by the stationary solution $(u_\infty,v_\infty)$;
the functional $\lyp_*$ has no useful convexity properties.
On a very formal level --
pretending that $\lyp_u$, $\lyp_v$ and $\lyp_*$ are smooth functionals on Euclidean spaces and denoting their ``gradients'' by $\grd_u$ and $\grd_v$
-- the dissipation of the \emph{principal entropy} $\lyp_u+\lyp_v$ amounts to
\begin{align}
 \label{eq:veryformal}
 \begin{split}
   -\frac{\dn}{\dn t}\big(\lyp_u+\lyp_v\big)
   &= \grd_u\lyp_u\cdot\grd_u\calH + \grd_v\lyp_v\cdot\grd_v\calH \\
   &= \|\grd_u\lyp_u\|^2 + \|\grd_v\lyp_v\|^2 + \eps\grd_u\lyp_u\cdot\grd_u\lyp_* + \eps\grd_v\lyp_v\cdot\grd_v\lyp_* \\
   &\ge (1-\eps)\|\grd_u\lyp_u\|^2 + (1-\eps)\|\grd_v\lyp_v\|^2 - \frac\eps2 \big(\|\grd_u\lyp_*\|^2 + \|\grd_v\lyp_*\|^2\big).
 \end{split}
\end{align}
By convexity of $\lyp_u$ and $\lyp_v$, one has the inequalities
\begin{align*}
 \|\grd_u\lyp_u\|^2 \ge 2\lambda_\eps\lyp_u, \quad
 \|\grd_v\lyp_v\|^2 \ge 2\kappa\lyp_v,
\end{align*}
and so we are \emph{almost} in the situation to apply the Gronwall estimate to \eqref{eq:veryformal}
and conclude convergence to equilibrium with an exponential rate of $\min(\lambda_\eps,\kappa)>0$.
However, it remains to estimate the terms involving the ``gradients'' of $\lyp_*$.
This is relatively straightforward if the entropy $\calH(u,v)$ is sufficiently close to its minimal value $\calH(u_\infty,v_\infty)$,
but is rather difficult for $(u,v)$ far from equilibrium. Moreover, rigorous estimates have to be carried out on the time-discrete level (with subsequent passage to continuous time) since our notion of solution is too weak to carry out the respective estimates in continuous time.\\

In the language of gradient flows, our results can be interpreted as follows.
For $\eps=0$, the functional $\calH$ is $\Lambda_0$-convex along geodesics in $(\theX,\dist)$, with $\Lambda_0=\min(\lambda_0,\kappa)>0$.
Consequently, there is an associated $\Lambda_0$-contractive gradient flow defined on all of $\theX$ which satisfies \eqref{eq:pde_u}\&\eqref{eq:pde_v},
and in particular all solutions converge with the exponential rate $\Lambda_0$ to the unique equilibrium.
For every $\eps>0$, the convexity of $\calH$ is lost; see \cite{zinsl2012} for a discussion of (non-)convexity in a similar situation. 
By Theorem \ref{thm:existence}, equations \eqref{eq:pde_u}\&\eqref{eq:pde_v} still define a continuous flow on the proper domain of $\calH$, which is $\theX\cap(L^2(\Rdrei)\times W^{1,2}(\Rdrei))$.
Further, we show that on the (almost exhaustive) subset of those $(u,v)$ with $v\in L^{6/5}(\Rdrei)$,
this flow still converges to an equilibrium with an exponential rate $\Lambda_\eps\ge\Lambda_0-L\eps>0$
-- see Theorem \ref{thm:convergence}, for all $\eps>0$ sufficiently small.

From this point of view, our result is perturbative: The uncoupled system ($\eps=0$) exhibiting a strictly contractive flow is perturbed in such a way that the perturbed system ($\eps>0$) still yields exponential convergence towards the unique equilibrium -- with a slightly slower convergence rate than in the unperturbed case. For this approach to work, we obviously need to require $\kappa>0$ and $\lambda_0>0$. 
On the other hand, this theorem is stronger than a usual perturbation result:
The crucial point is that we do \emph{not} require the initial condition $(u_0,v_0)$ to be close to equilibrium,
apart from the rather harmless additional hypothesis that $v_0\in L^{6/5}(\Rdrei)$, which could be weakened further with additional technical effort.

Our result on global existence of weak solutions (Theorem \ref{thm:existence}), however, can be generalized to the case of $\kappa=0$ and no convexity assumption on the confinement potential (see e.g. \cite{zinsl2012}). Further generalization of Theorem \ref{thm:existence} to the case of nonlinear, but non-quadratic diffusion can be achieved with similar techniques as in \cite{zinsl2012}. However, in our analysis of the long-time behaviour, the right entropy dissipation estimates are not at hand at the best of our knowledge when dealing with non-quadratic diffusion. To keep technicalities to a minimum, we consider the quadratic case throughout this work.

We expect that similar results can be proved for system \eqref{eq:pde_u}\&\eqref{eq:pde_v} on a bounded domain $\Omega\subset\R^3$, even with vanishing confinement $W\equiv 0$. The role of the confinement will then be played by Poincaré's inequality. Our setup with a convex confinement on $\Rdrei$ fits much more naturally into the the variational framework.

\subsection{Modelling background}
The system of equations \eqref{eq:pde_u}\&\eqref{eq:pde_v} is a variant of the so-called \textit{Keller-Segel model for chemotaxis} 
describing the time-dependent distribution of biological cells or microorganisms in response to gradients of chemical substances (\textit{chemotaxis}). 
The original model -- corresponding to the linear response function from \eqref{eq:classicalKS} -- has been developed by Keller and Segel in \cite{keller1970} as description of slime mold aggregation. 
However, chemotactic processes occur in many (and highly different) biological systems; for the biological details, we refer to the book by Eisenbach \cite{eisenbach2004}. 
For example, many bacteria like \textit{Escherichia coli} possess \textit{flagellae} driven by small motors which respond to gradients of signalling molecules in the environment. 
Chemotaxis also plays an important role in embryonal development, e.g. in the development of blood vessels (\textit{angiogenesis}), which is also a crucial step in tumour growth. 
Starting from the basic Keller-Segel model, many different model extensions are conceivable. 
A broad range of those is summarized in the review articles by Hillen and Painter \cite{hillen2009} and Horstmann \cite{horstmann2003}. 
Details on the modelling aspects can be found e.g.\ in the books by Murray \cite{murray2003} and Perthame \cite{perthame2007}.

In the model \eqref{eq:pde_u}\&\eqref{eq:pde_v} under consideration here,
$u$ is the time-dependent spatial density of the cells, and $v$ is the time-dependent concentration of the signalling substance.
Equation \eqref{eq:pde_u} describes the temporal change in cell density 
due to the directed drift of cells towards regions with higher concentration of the substance and due to undirected diffusion.
Equation \eqref{eq:pde_v} models the degradation of the signalling substance, as well as its production by the cells.
Two special aspects are included in this particular model:
nonlinear diffusion, i.e., the use of a non-constant, $u$-dependent mobility coefficent for the diffusive motion of the bacteria,
and signal-dependent chemotactic sensitivity, i.e., the use of the -- in general nonlinear -- response $\phi(v)$ instead of the concentration $v$ itself.
For the first, we refer to \cite{hillen2009} and the references therein for biological motivation. 
The second is motivated by the fact that the conversion of an external signal into a reaction of the considered microorganism (\textit{signal transduction}) 
often occurs by binding and dissociation of molecules to certain receptors. 
The movement of the cell is then caused rather by gradients in the number of receptors occupied by signalling molecules than by concentration gradients of signalling molecules itself. 
For growing concentrations, the number of bound receptors can exhibit a saturation, such that the gradient vanishes. 
In \cite{hillen2009,segel1977,lapidus1976}, this was included into the model by the \textit{chemotactic sensitivity function}
\begin{align*}
  \phi'(v)&=-\frac{1}{(1+v)^2},
\end{align*}
which fits into our model with the response function $\phi$ defined in \eqref{eq:paradigm}.
Finally, an external background potential $W$ is included in order to generate a spatial confinement of the bacterial population. 

For the dynamics of the signalling substance, we assume linear diffusion according to Fick's laws and degradation with a constant, exponential rate $\kappa$. 
The nonnegative term $-\eps u \phi'(v)$ models the production of signalling substance by the microorganisms;
here it is taken into account that the cells might be the less active in producing additional substance the higher its local concentration already is.
This is consistent with the models presented in \cite[Sect. 6]{horstmann2003}.

By definition, $u$ and $v$ are density/concentration functions and thus should be nonnegative.
Note that it is part of our results that for given nonnegative initial data (of sufficient regularity), there exists a weak solution that is nonnegative for all times $t>0$.
On a formal level, nonnegativity is an easy consequence of the particular structure of the system \eqref{eq:pde_u}\&\eqref{eq:pde_v}.

\subsection{Relation to the existing literature}
The rapidly growing mathematical literature about the Keller-Segel model and its manifold variants is devoted primarily to the dichotomy \emph{global existence versus finite-time blow-up} of (weak, possibly measure-valued) solutions,
but the long-time behaviour of global solutions has been intensively investigated as well.

Global existence and blow-up in the classical parabolic-parabolic Keller-Segel model,
which is \eqref{eq:pde_u}\&\eqref{eq:pde_v} with $\phi(v)=-v$, $W\equiv0$ and \emph{linear} diffusion,
has been thoroughly studied by Calvez and Corrias \cite{calvez2008} in space dimension $d=2$, 
and by Corrias and Perthame \cite{corrias2008} in higher space dimensions $d>2$, see also \cite{biler2011,kozono2009,mizoguchi2013,nagai2003,senba2006,sugiyama2006,yamada2011}.
Variants with nonlinear diffusion and drift have been studied for instance by Sugiyama \cite{sugiyama2006_2, sugiyama2007}.
The results from \cite{sugiyama2006_2} already indicate that in the model \eqref{eq:pde_u}\&\eqref{eq:pde_v} under consideration, 
blow-up \emph{never} occurs, in accordance with Theorem \ref{thm:existence}.

In the aforementioned works \cite{corrias2008,nagai2003}, the intermediate asymptotics of global solutions have been studied as well:
it is proved that the cell density converges to the self-similar solution of the heat equation at an algebraic rate,
i.e., in a properly scaled frame, the density approaches a Gaussian.
See also \cite{rosado2008} for an extension of this result to a model with size-exclusion.
Similar asymptotic behaviour has been proved in models with nonlinear, homogeneous diffusion,
e.g.\ by Sugiyama and Luckhaus \cite{luckhaus2006,luckhaus2007}.
There, the intermediate asymptotics are that of a porous medium equation with the respective homogeneous nonlinearity,
i.e., the rescaled bacterial density converges to a Barenblatt profile.
These intermediate asymptotics are -- at least morally -- related to Theorem \ref{thm:convergence}:
recall that algebraic convergence to self-similarity for the unconfined porous medium equation 
is comparable to exponential convergence to an equilibrium for the equation with $\lambda$-convex confinement.

The fully parabolic model \eqref{eq:pde_u}\&\eqref{eq:pde_v} with a non-linear response $\phi$ has not been rigorously analyzed so far,
with the following exception:
In her thesis \cite{post1999}, Post proves existence and uniqueness of solutions 
to a similar system with linear diffusion and vanishing confinement on a bounded domain by non-variational methods
and obtains convergence to the (spatially homogeneous) stationary solution from compactness arguments. Variants of the classical parabolic-parabolic or parabolic-elliptic Keller-Segel models with a nonlinear chemotactic sensitivity coefficient have also been studied e.g. in \cite{nagai1998,winkler2010}.

Despite the fact that energy/entropy methods are one of the key tools for the analysis of Keller-Segel-type systems, 
the use of genuine variational methods is relatively recent in that context.
The variational machinery of gradient flows in transportation metrics, 
originally developed by Jordan, Kinderlehrer and Otto \cite{jko1998} for the linear Fokker-Planck equation,
has been applied to a variety of dynamical systems:
mainly to nonlinear diffusion \cite{carrillo2000, otto2001, carrillo2006, agueh2008},
but also to aggregation \cite{carrillo2003kinetic,carrillo2006contractions,carrillo2011global} and fourth-order equations \cite{giacomelli2001, gianazza2009, matthes2009}.
For the parabolic-elliptic Keller-Segel model, which can be reduced to a single nonlocal scalar equation,
the variational framework was established by Blanchet, Calvez and Carrillo \cite{blanchet2008},
who represented the evolution as a gradient flow of an appropriate potential with respect to the Wasserstein distance 
and constructed a numerical scheme on these grounds.
Later, the gradient flow structure has been used for a detailed analysis of the basin of attraction in the critical mass case by Blanchet, Carlen and Carrillo \cite{carrillo2012} (see also e.g. \cite{blanchet2009,calvez_carrillo2012,lopezgomez2013}).

The parabolic-parabolic Keller-Segel model was somewhat harder to fit into the framework,
since the two equations are (formally) gradient flows with respect to \emph{different} metrics: Wasserstein and $L^2$.
The first rigorous analytical result on grounds of this structure was given by Blanchet and Lauren\c cot in \cite{blanchet2012},
where they constructed weak solutions for the system with critical exponents of nonlinear diffusion.
Later their result was generalized by the first author of this work to other, non-critical parameter situations in \cite{zinsl2012}. 
To the best of our knowledge, our approach taken here to prove long-time asymptotics by gradient flow techniques in a combined Wasserstein-$L^2$-metric is novel.

\subsection{Plan of the paper}
First, we summarize common facts and definitions on gradient flows in metric spaces in Section \ref{sec:pre}.
After that, various properties of the entropy functional are derived in Section \ref{sec:prop_H}.
On grounds of these properties, we construct a weak solution by means of the minimizing movement scheme in Section \ref{sec:existence},
proving Theorem \ref{thm:existence}.
Existence, uniqueness and regularity of stationary solutions is studied in Section \ref{sec:stat}, and the proof of Theorem \ref{thm:convergence} is completed in Section \ref{sec:conv}.

\section{Preliminaries}\label{sec:pre}
In this section, we recall the relevant definitions and properties related to gradient flows in metric spaces $(X,\metr)$, following \cite{savare2008}.
The two metric spaces of interest here are $L^2(\Rd)$ with the metric induced by the norm,
and the space $\prb(\Rd)$ of probability measures, endowed with the $L^2$-Wasserstein distance $\W_2$.
We also discuss the compound metric $\dist$ from \eqref{eq:dist}.

\subsection{Spaces of probability measures and the Wasserstein distance}\label{subsec:wasser}
We denote by $\prbb(\Rd)$ the space of probability measures on $\Rd$.
By abuse of notation, we will frequently identify absolutely continuous measures $\mu\in\prbb(\Rd)$
with their respective (Lebesgue) density functions $u=\dn\mu/\dn x\in L^1_+(\Rd)$, where $L^p_+(\Rd)$ for $p\ge 1$ denotes the subspace of those $L^p(\Rd)$ functions with nonnegative values.

A sequence $(\mu_n)_{n\in\N}$ in $\prbb(\Rd)$ is called \emph{narrowly convergent} to its limit $\mu\in\prbb(\Rd)$ if
\begin{align*}
 \lim_{n\to\infty}\int_{\Rd}\varphi(x)\dd\mu_n(x)&=\int_\Rd \varphi(x)\dd\mu(x).
\end{align*}
for every bounded, continuous function $\varphi:\,\Rd\to\R$.
By $\prb(\Rd)$, we denote the subspace of those $\mu\in\prbb(\Rd)$ with finite second moment
\begin{align*}
 \m_2(\mu)&:=\int_{\Rd}|x|^2\dd\mu(x).
\end{align*}
$\prb(\Rd)$ turns into a complete metric space when endowed with the $L^2$-Wasserstein distance $\W_2$.
We do not recall the general definition of $\W_2$ here.
Instead, since we are concerned with absolutely continuous measures in $\prb(\Rd)$ only,
we remark that for probability density functions $u_1,u_2\in L^1_+(\Rd)$,
the Wasserstein distance is given by the following infimum
\begin{align*}
 \W_2^2(u_1,u_2)&=\inf\left\{\int_{\Rd}|t(x)-x|^2u_1(x)\dd x\,\bigg.\bigg|\,t:\Rd\to\Rd \text{ Borel-measurable and }t\# u_1=u_2\right\},
\end{align*}
where $t\# u$ denotes the \emph{push-forward} w.r.t. the map $t$. In this case, the infimum above is attained
by an optimal transport map \cite[Thm. 2.32]{villani2003}.
Convergence in the metric space $(\prb(\Rd),\W_2)$ is equivalent to narrow convergence and convergence of the second moment.
Further, $\W_2$ is lower semicontinuous in both components with respect to narrow convergence.

\subsection{Geodesic convexity and gradient flows in metric spaces}\label{subsec:gf}
A functional $\auxil:X\to\R\cup\{\infty\}$ defined on the metric space $(X,\metr)$ is called \emph{geodesically $\lambda$-convex}
for some $\lambda\in\R$ if for every $w_0,w_1\in X$ and $s\in [0,1]$, one has
\begin{align*}
 \auxil(w_s)\le (1-s)\auxil(w_0)+s\auxil(w_1)-\frac{\lambda}{2}s(1-s)\metr^2(w_0,w_1),
\end{align*}
where $w_s:\,[0,1]\to X,\,s\mapsto w_s$ is a \emph{geodesic connecting} $w_0$ and $w_1$.

On $L^2(\Rd)$, the (unique up to rescaling) geodesic from $w_0$ to $w_1$ is given by linear interpolation, i.e., $w_s=(1-s)w_0+sw_1$.
Hence a functional $\mathfrak{F}:L^2(\Rd)\to\R\cup\{\infty\}$ of the form
\begin{align*}
 \mathfrak{F}(w) = \int_\Rd f\big(w(x),\dff w(x),\dff^2 w(x)\big)\dd x
\end{align*}
with a given continuous function $f:\R\times\R^d\times\R^{d\times d}\to\R$ is $\lambda$-convex
iff $(z,p,Q)\mapsto f(z,p,Q)-\frac{\lambda}{2} z^2$ is (jointly) convex.

In the metric space $(\prb(\Rd),\W_2)$, geodesic $\lambda$-convexity is a much more complicated concept.
We recall two important classes of $\lambda$-convex functionals (see e.g. \cite[Ch. 9.3]{savare2008}, {\cite[Thm. 5.15]{villani2003}}).

\begin{thm}[Criteria for geodesic convexity in $(\prb(\Rd),\W_2)$]\label{thm:crit_conv}
The following statements are true:
 \begin{enumerate}[(a)]
 \item Let a convex function $h\in C^0([0,\infty))$ be given, and define the functional $\mathfrak{H}(u):=\int_\Rd h(u(x))\dd x$ for $u\in\prb(\Rd)$.
   Then $\mathfrak{H}$ extends naturally to a lower semicontinuous functional on $\prb(\Rd)$.
   Now, if $h(0)=0$ and $r\mapsto r^dh(r^{-d})$ is convex and nonincreasing on $(0,\infty)$,
   then $\mathfrak{H}$ is $0$-geodesically convex in $(\prb(\Rd),\W_2)$.
 \item Let a function $W\in C^0(\Rd)$ be given, and define the functional $\mathfrak{H}(\mu):=\int_\Rd W(x)\dd\mu(x)$ for all $\mu\in\prb(\Rd)$.
   If $W$ is $\lambda$-convex (as a functional on the metric space $\Rd$ with the Euclidean distance) for some $\lambda\in\R$,
   then $\mathfrak{H}$ is $\lambda$-geodesically convex in $(\prb(\Rd),\W_2)$.
 \end{enumerate}
\end{thm}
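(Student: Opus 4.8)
The statement is classical: part (a) is McCann's displacement convexity criterion and part (b) is the standard behaviour of potential energies along optimal transport, so the plan is to reproduce those arguments, outsourcing the transport-theoretic prerequisites to \cite[Ch.~9.3]{savare2008} and \cite[Thm.~5.15]{villani2003}. In both parts the set-up is the same. Given absolutely continuous $\mu_0,\mu_1\in\prb(\Rd)$ -- which is all we need here, in view of the remark preceding the theorem, and if an endpoint has infinite energy the asserted inequality is trivial -- I would let $t$ be the optimal transport map from $\mu_0$ to $\mu_1$; by Brenier's theorem $t=\grd\psi$ for a convex potential $\psi$, the (essentially unique) geodesic is $\mu_s=(T_s)_\#\mu_0$ with $T_s:=(1-s)\,\mathrm{id}+s\,t$, and $\W_2^2(\mu_0,\mu_1)=\int_\Rd|x-t(x)|^2\dd\mu_0(x)$.

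For part (b) the argument is pointwise: since $W$ is $\lambda$-convex on Euclidean $\Rd$ and $T_s(x)=(1-s)x+s\,t(x)$ lies on the segment between $x$ and $t(x)$,
\[
W(T_s(x))\le(1-s)W(x)+sW(t(x))-\tfrac{\lambda}{2}s(1-s)|x-t(x)|^2 .
\]
Integrating this against $\mu_0$, using $t_\#\mu_0=\mu_1$ to rewrite $\int_\Rd W(t(x))\dd\mu_0(x)=\int_\Rd W\dd\mu_1$, and inserting the formula for $\W_2^2$ above, yields exactly the claimed $\lambda$-geodesic convexity of $\mu\mapsto\int_\Rd W\dd\mu$. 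This part is entirely routine.

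Part (a) is the substantial one, and I would run it in four steps. (i) Invoke the Monge--Amp\`ere change of variables for the displacement interpolant, $u_s(T_s(x))\det\dff T_s(x)=u_0(x)$ for a.e.\ $x$, where $\dff T_s(x)=(1-s)\eins+s\,\dff^2\psi(x)$ is symmetric positive semidefinite with $\dff^2\psi$ the a.e.-defined Alexandrov Hessian; by change of variables,
\[
\mathfrak{H}(u_s)=\int_{\{u_0>0\}} h\!\left(\frac{u_0(x)}{\det\dff T_s(x)}\right)\det\dff T_s(x)\dd x .
\]
(ii) Fix such an $x$, set $\rho=u_0(x)>0$ and $D(s)=\det\dff T_s(x)=\prod_i\big((1-s)+s\mu_i\big)$ with $\mu_i\ge0$ the eigenvalues of $\dff^2\psi(x)$, and observe that with $g(r):=r^dh(r^{-d})$, $g(0):=0$, one has $D(s)\,h(\rho/D(s))=\rho\,g\big((D(s)/\rho)^{1/d}\big)$. (iii) Note that $s\mapsto D(s)^{1/d}$ is concave on $[0,1]$, being a geometric mean of nonnegative affine functions of $s$ -- equivalently, by Minkowski's determinant inequality applied to the affine family $\dff T_s$ -- so that, $g$ being convex and nonincreasing by hypothesis, the composition $s\mapsto g\big((D(s)/\rho)^{1/d}\big)$ is convex, hence so is $s\mapsto D(s)\,h(\rho/D(s))$ for a.e.\ $x$. (iv) Integrate in $x$; the set $\{u_0=0\}$ contributes nothing because $h(0)=0$, and one obtains $\mathfrak{H}(u_s)\le(1-s)\mathfrak{H}(u_0)+s\mathfrak{H}(u_1)$, i.e.\ $0$-geodesic convexity. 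The lower semicontinuity of $\mathfrak{H}$ on $\prb(\Rd)$ claimed in the statement is part of the same standard package, and I would simply quote it from \cite{savare2008}.

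The main obstacle is purely the analytic input behind step (i) of part (a): Brenier/McCann's structure theorem identifying the optimal map with the gradient of a convex function, the a.e.\ second differentiability of that potential, and the resulting Monge--Amp\`ere change-of-variables identity for the interpolant $u_s$. The Minkowski determinant inequality in step (iii) is elementary by comparison, and the remaining bookkeeping with $g$ is routine. Consequently I would keep the write-up short, stating these transport-theoretic facts with precise references to \cite{savare2008,villani2003} rather than reproving them, and devote the few lines of actual argument to the convexity chain in (ii)--(iv) and to part (b).
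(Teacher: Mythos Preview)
Your sketch is correct and follows the classical McCann displacement-convexity argument for (a) and the pointwise potential-energy argument for (b). Note, however, that the paper does not give its own proof of this theorem: it is stated as a recollection of well-known facts with references to \cite[Ch.~9.3]{savare2008} and \cite[Thm.~5.15]{villani2003}, so there is nothing to compare against beyond those sources, whose arguments are precisely the ones you outline.
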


%
%
Next, we introduce a notion of \emph{gradient flow}.
There are various possible characterizations.
For our purposes here, we need the following very strong one.
\begin{definition}
 Let $\auxil:X\to\R\cup\{\infty\}$ be a lower semicontinuous functional on the metric space $(X,\metr)$.
 A continuous semigroup $\flow{(\cdot)}{\auxil}$ on $(X,\metr)$ is called \emph{$\kappa$-flow} for some $\kappa\in\R$,
 if the \emph{evolution variational inequality}
 \begin{align}
   \label{eq:evi}
   \frac{1}{2}\frac{\dn^+}{\dn t}\metr^2(\flow{t}{\auxil}(w),\tilde w)+\frac{\kappa}{2}\metr^2(\flow{t}{\auxil}(w),\tilde w)+\auxil(\flow{t}{\auxil}(w))
   &\le\auxil(\tilde w),
\end{align}
 holds for arbitrary $w,\tilde w$ in the domain of $\auxil$, and for all $t\ge 0$.
\end{definition}
If $\flow{(\cdot)}{\auxil}$ is a $\kappa$-flow for the $\lambda$-convex functional $\auxil$,
then $\flow{(\cdot)}{\auxil}$ is also a \emph{gradient flow} for $\auxil$ in essentially all possible interpretations of that notion.
For the metric spaces $(\prb(\Rd),\W_2)$ and $L^2(\Rd)$,
it can be proved that every lower semicontinuous and geodesically $\lambda$-convex functional possesses a unique $\kappa$-flow, with $\kappa:=\lambda$
(see \cite[Theorem 11.1.4]{savare2008} and \cite[Corollary 4.3.3]{savare2008}, respectively).

In these metric spaces, $\lambda$-geodesic convexity with $\lambda>0$ implies existence and uniqueness of a minimizer $w_{\min}$ of $\auxil$, for which the following holds (see e.g. \cite[Lemma 2.4.8, Thm. 4.0.4]{savare2008}):
\begin{align}
\frac{\lambda}{2}\metr^2(w,w_{\min})&\le \auxil(w)-\auxil(w_{\min})\le \frac{1}{2\lambda}\lim_{h\downarrow 0}\frac{\auxil(w)-\auxil(\flow{h}{\auxil}(w))}{h}.\label{eq:subdiff}
\end{align}
\begin{remark}[Formal calculation of evolution equations associated to gradient flows]\label{rem:formal}
 In the  metric spaces of interest here,
 one can explicitly write an evolution equation for the flow $\flow{(\cdot)}{\auxil}$ of a sufficiently regular functional $\auxil$,
 see e.g.\ \cite[Sect. 8.2]{villani2003}.
 On $(\prb(\Rd),\W_2)$, one has
 \begin{align*}
   \partial_t \flow{t}{\auxil}(w) &=\mathrm{div}\left(\flow{t}{\auxil}(w) \dff \left(\frac{\delta\auxil}{\delta w}(\flow{t}{\auxil}(w))\right)\right),
 \end{align*}
 and on $L^2(\Rd)$, one has
 \begin{align*}
   \partial_t \flow{t}{\auxil}(w)&=-\frac{\delta\auxil}{\delta w}(\flow{t}{\auxil}(w)).
 \end{align*}
 Here, $\frac{\delta\auxil}{\delta w}$ stands for the usual first variation of the functional $\auxil$ on $L^2$.
\end{remark}

\subsection{The metric $\dist$}
It is easily verified that $\theX:=\prb(\Rdrei)\times L^2_+(\Rdrei)$ becomes a complete metric space when endowed with the compound metric $\dist$ defined in \eqref{eq:dist}.
The topology on $\theX$ induced by $\dist$ is that of the cartesian product.
Moreover:
\begin{lemma}
 \label{lem:lsc_dist}
 The distance $\dist$ is weakly lower semicontinuous on $\theX$ in the following sense:
 If $(u_n,v_n)_{n\in\N}$ is a sequence in $\theX$ such that $u_n$ converges to $u\in\prb(\Rdrei)$ narrowly and $v_n$ converges to $v\in L^2(\Rdrei)$ weakly in $L^2(\Rdrei)$,
 then
 \begin{align*}
   \dist((u,v),(\tilde u,\tilde v)) \le \liminf_{n\to\infty}\dist((u_n,v_n),(\tilde u,\tilde v))
 \end{align*}
 holds, for every $(\tilde u,\tilde v)\in\theX$.
\end{lemma}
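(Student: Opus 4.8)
The plan is to reduce the joint lower semicontinuity of $\dist$ to the separate lower semicontinuity of its two constituent pieces, $\W_2^2$ and the squared $L^2$-norm, under the respective modes of convergence. Since $\dist((u,v),(\tilde u,\tilde v))^2 = \W_2^2(u,\tilde u) + \|v-\tilde v\|_{L^2(\Rdrei)}^2$ and the map $r\mapsto\sqrt r$ is continuous and nondecreasing on $[0,\infty)$, it suffices to show
\begin{align*}
  \W_2^2(u,\tilde u) + \|v-\tilde v\|_{L^2(\Rdrei)}^2 \le \liminf_{n\to\infty}\Big(\W_2^2(u_n,\tilde u) + \|v_n-\tilde v\|_{L^2(\Rdrei)}^2\Big).
\end{align*}
First I would recall that $\W_2$ is lower semicontinuous with respect to narrow convergence in its first argument (stated in Section \ref{subsec:wasser}): since $u_n\to u$ narrowly, $\W_2(u,\tilde u)\le\liminf_n\W_2(u_n,\tilde u)$, hence also for the squares. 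Second, since $v_n\rightharpoonup v$ weakly in $L^2(\Rdrei)$, so does $v_n-\tilde v\rightharpoonup v-\tilde v$, and the norm is weakly lower semicontinuous: $\|v-\tilde v\|_{L^2(\Rdrei)}\le\liminf_n\|v_n-\tilde v\|_{L^2(\Rdrei)}$.

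The one genuine subtlety — and the only place any real argument is needed — is that the two $\liminf$'s above are along the \emph{same} subsequence realizing the $\liminf$ of the sum, so one cannot simply add two separate $\liminf$ inequalities termwise. The clean way around this is to pass to a subsequence $(n_k)$ along which $\W_2^2(u_{n_k},\tilde u) + \|v_{n_k}-\tilde v\|_{L^2(\Rdrei)}^2$ converges to the $\liminf$ of the full sequence; along this subsequence we still have $u_{n_k}\to u$ narrowly and $v_{n_k}\rightharpoonup v$ weakly, so applying the two lower semicontinuity facts above to the subsequence gives
\begin{align*}
  \W_2^2(u,\tilde u) \le \liminf_k\W_2^2(u_{n_k},\tilde u),\qquad \|v-\tilde v\|_{L^2(\Rdrei)}^2 \le \liminf_k\|v_{n_k}-\tilde v\|_{L^2(\Rdrei)}^2,
\end{align*}
and since the sum of the two (sub)sequences converges, the $\liminf$ of the sum equals the sum — here I use the elementary fact that $\liminf(a_k+b_k)\ge\liminf a_k + \liminf b_k$ — yielding the desired bound. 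Taking square roots completes the proof.

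I do not expect any real obstacle: the only care required is the bookkeeping with subsequences noted above, together with checking that weak $L^2$ convergence of $v_n$ is preserved under the (constant) shift by $\tilde v$, which is immediate. No new estimates, no use of the finite-second-moment structure beyond what Section \ref{subsec:wasser} already provides, and no convexity of $\calH$ enter here.
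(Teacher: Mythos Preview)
Your argument is correct, and in fact the paper states Lemma~\ref{lem:lsc_dist} without proof, treating it as immediate from the lower semicontinuity of $\W_2$ under narrow convergence (recalled in Section~\ref{subsec:wasser}) and the weak lower semicontinuity of the $L^2$-norm. Your write-up supplies exactly the routine subsequence bookkeeping that the paper suppresses.
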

For our purposes, it suffices to discuss convexity and gradient flows for functionals $\Phi:\theX\to\R\cup\{\infty\}$ of the separable form $\Phi(u,v)=\Phi_1(u)+\Phi_2(v)$.
One immediately verifies
\begin{lemma}
 Assume that $\Phi_1$ and $\Phi_2$ are $\lambda$-convex and lower semicontinuous functionals on the respective spaces $(\prb(\Rdrei),\W_2)$ and $L^2(\Rdrei)$,
 and denote their respective gradient flows by $\flow{(\cdot)}{1}$ and $\flow{(\cdot)}{2}$.
 Then $\Phi:\theX\to\R\cup\{\infty\}$ with $\Phi(u,v)=\Phi_1(u)+\Phi_2(v)$ is a $\lambda$-convex and lower semicontinuous functional on $(\theX,\dist)$,
 and the semigroup $\flow{(\cdot)}{\Phi}$ given by $\flow{t}{\Phi}(u,v)=(\flow{t}{1}(u),\flow{t}{2}(v))$ is a $\lambda$-flow for $\Phi$.
\end{lemma}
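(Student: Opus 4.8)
The plan is to verify the two assertions---$\lambda$-convexity and lower semicontinuity of $\Phi$ on $(\theX,\dist)$, and the $\lambda$-flow property of the product semigroup---directly from the definitions, exploiting that the metric $\dist$ splits as $\dist^2 = \W_2^2 + \|\cdot\|_{L^2}^2$ and that $\Phi$ splits correspondingly as $\Phi_1 + \Phi_2$. First I would record that geodesics in $(\theX,\dist)$ are precisely pairs $(w_s^{(1)},w_s^{(2)})$ where $w_s^{(1)}$ is a constant-speed geodesic in $(\prb(\Rdrei),\W_2)$ and $w_s^{(2)}$ is the linear interpolation in $L^2(\Rdrei)$; this follows because the squared metric is additive, so a curve is minimizing for $\dist$ iff each component is minimizing for its own metric (and the speeds are then automatically consistent). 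Along such a geodesic one simply adds the two $\lambda$-convexity inequalities
\begin{align*}
 \Phi_1(w_s^{(1)}) &\le (1-s)\Phi_1(w_0^{(1)}) + s\Phi_1(w_1^{(1)}) - \tfrac{\lambda}{2}s(1-s)\W_2^2(w_0^{(1)},w_1^{(1)}),\\
 \Phi_2(w_s^{(2)}) &\le (1-s)\Phi_2(w_0^{(2)}) + s\Phi_2(w_1^{(2)}) - \tfrac{\lambda}{2}s(1-s)\|w_0^{(2)}-w_1^{(2)}\|_{L^2(\Rdrei)}^2,
\end{align*}
and the two quadratic terms on the right combine to $-\tfrac{\lambda}{2}s(1-s)\dist^2((w_0^{(1)},w_0^{(2)}),(w_1^{(1)},w_1^{(2)}))$, which is exactly geodesic $\lambda$-convexity of $\Phi$. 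Lower semicontinuity of $\Phi$ is immediate since the topology on $\theX$ is the product topology (as noted just above the lemma), $\Phi_1$ and $\Phi_2$ are each lower semicontinuous, and a finite sum of lower semicontinuous functions bounded below is lower semicontinuous; one should only remark that $\Phi$ is proper because $\Phi_1$ and $\Phi_2$ are.

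Next I would establish the $\lambda$-flow property. The candidate semigroup $\flow{t}{\Phi}(u,v) = (\flow{t}{1}(u),\flow{t}{2}(v))$ is continuous on $(\theX,\dist)$ because each factor is continuous and the metric is the product metric, and it is a semigroup because each factor is. For the evolution variational inequality, fix $(u,v)$ and $(\tilde u,\tilde v)$ in the domain of $\Phi$ (equivalently $u,\tilde u\in\mathrm{Dom}(\Phi_1)$ and $v,\tilde v\in\mathrm{Dom}(\Phi_2)$) and write $\dist^2(\flow{t}{\Phi}(u,v),(\tilde u,\tilde v)) = \W_2^2(\flow{t}{1}(u),\tilde u) + \|\flow{t}{2}(v)-\tilde v\|_{L^2(\Rdrei)}^2$. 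Each summand is the squared distance appearing in the EVI for $\flow{(\cdot)}{1}$ and $\flow{(\cdot)}{2}$ respectively, so---after noting that the upper right Dini derivative of a sum is at most the sum of the upper right Dini derivatives---adding the two EVIs
\begin{align*}
 \tfrac12\tfrac{\dn^+}{\dn t}\W_2^2(\flow{t}{1}(u),\tilde u) + \tfrac{\lambda}{2}\W_2^2(\flow{t}{1}(u),\tilde u) + \Phi_1(\flow{t}{1}(u)) &\le \Phi_1(\tilde u),\\
 \tfrac12\tfrac{\dn^+}{\dn t}\|\flow{t}{2}(v)-\tilde v\|_{L^2(\Rdrei)}^2 + \tfrac{\lambda}{2}\|\flow{t}{2}(v)-\tilde v\|_{L^2(\Rdrei)}^2 + \Phi_2(\flow{t}{2}(v)) &\le \Phi_2(\tilde v)
\end{align*}
yields precisely the EVI \eqref{eq:evi} for $\Phi$ and $\dist$ with the same $\kappa=\lambda$.

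The only genuinely non-routine point---the main obstacle, such as it is---is the claim that geodesics in the product metric are exactly products of geodesics in the factors, and correspondingly the superadditivity of Dini derivatives used in the EVI step; both are elementary but deserve an explicit sentence, since the definition of geodesic $\lambda$-convexity quantifies over geodesics of $(\theX,\dist)$ and one must know these are no larger a class than the products one controls. In fact for the $\lambda$-convexity inequality it suffices to test against product geodesics (any geodesic connecting two given points will do, and product geodesics exist because $(\prb(\Rdrei),\W_2)$ and $L^2(\Rdrei)$ are geodesic spaces), so even this point can be handled in one line. Everything else is a direct sum of the two factorwise statements, which is why the lemma is stated as "one immediately verifies."
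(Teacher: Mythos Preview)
Your proposal is correct and matches the paper's approach: the paper gives no proof at all, merely prefacing the lemma with ``One immediately verifies,'' and your verification is precisely the direct splitting argument that phrase invites. One small terminological slip: for the EVI step you want \emph{sub}additivity of the upper right Dini derivative, $\tfrac{\dn^+}{\dn t}(f+g)\le\tfrac{\dn^+}{\dn t}f+\tfrac{\dn^+}{\dn t}g$, not superadditivity---but you have the inequality the right way round, so the argument stands.
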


\section{Properties of the entropy functional}\label{sec:prop_H}
Recall the definition of the metric space $(\theX,\dist)$.
We define the \emph{entropy functional} $\calH:\theX\to\R\cup\{\infty\}$ as follows.
For all $(u,v)\in \theX\cap(L^2(\Rdrei)\times W^{1,2}(\Rdrei))$, set
\begin{align}
 \label{eq:H}
 \calH(u,v) :=\int_\Rdrei \left(\frac{1}{2}u^2+uW+\frac{1}{2}|\dff v|^2+\frac{\kappa}{2}v^2+\eps u\phi(v)\right)\dd x,
\end{align}
which is a finite value by our assumptions on $\phi$ and $W$. For all other $(u,v)\in\prb(\Rdrei)\times L^2(\Rdrei)$, we set $\calH(u,v)=+\infty$.
\begin{prop}[Properties of the entropy functional $\calH$]\label{prop:propH}
 The functional $\calH$ defined in \eqref{eq:H} has the following properties:
 \begin{enumerate}[(a)]
 \item There exist $C_0,\,C_1>0$ such that
\begin{align}
\label{eq:Hest}
\calH(u,v)&\ge C_0\left[\|u\|_{L^2(\Rdrei)}^2+\m_2(u)+\|v\|_{W^{1,2}(\Rdrei)}^2-C_1\right].
\end{align}
In particular, $\calH$ is bounded from below.
 \item $\calH$ is weakly lower semicontinuous in the following sense:
   For every sequence $(u_n,v_n)_{n\in\N}$ in $\theX$,
   where $(u_n)_{n\in\N}$ converges narrowly to some $u\in\prb(\Rdrei)$ and where $(v_n)_{n\in\N}$ converges weakly in $L^2(\Rdrei)$ to some $v\in  L^2(\Rdrei)$,
   one has
   \begin{align*}
     \calH(u,v)\le\liminf_{n\to\infty}\calH(u_n,v_n).
   \end{align*}
 \item For sufficiently small $\eps>0$, $\calH$ is $\lambda'$-geodesically convex for some $\lambda'>0$ with respect to the distance $\dist'((u_1,v_1),(u_2,v_2)):=\sqrt{\|u_1-u_2\|_{L^2(\Rdrei)}^2+\|v_1-v_2\|_{L^2(\Rdrei)}^2}$.
 \end{enumerate}
\end{prop}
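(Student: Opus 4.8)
The plan is to retain only fractions of the three manifestly nonnegative terms $\tfrac12u^2,\ \tfrac12|\dff v|^2,\ \tfrac\kappa2v^2$ and to absorb the only sign-indefinite contribution $\eps u\phi(v)$ into them. First I would use convexity of $\phi$ to get the pointwise bound $\phi(v)\ge\phi(0)+\phi'(0)v$ (valid since $v\ge0$), so that, after integration and using $\int_\Rdrei u\,\dd x=1$, one has $\calH(u,v)\ge\tfrac12\|u\|_{L^2(\Rdrei)}^2+\int_\Rdrei uW\,\dd x+\tfrac12\|\dff v\|_{L^2(\Rdrei)}^2+\tfrac\kappa2\|v\|_{L^2(\Rdrei)}^2+\eps\phi(0)-\eps|\phi'(0)|\int_\Rdrei uv\,\dd x$. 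The crucial point is then that the last term can be absorbed \emph{without any smallness assumption on $\eps$}: by Hölder's inequality, the Sobolev embedding $\|v\|_{L^6(\Rdrei)}\le C_S\|\dff v\|_{L^2(\Rdrei)}$, and the interpolation $\|u\|_{L^{6/5}(\Rdrei)}\le\|u\|_{L^1(\Rdrei)}^{2/3}\|u\|_{L^2(\Rdrei)}^{1/3}=\|u\|_{L^2(\Rdrei)}^{1/3}$, we get $\int_\Rdrei uv\,\dd x\le C_S\|u\|_{L^2(\Rdrei)}^{1/3}\|\dff v\|_{L^2(\Rdrei)}$, and two successive applications of Young's inequality (first to split off $\tfrac14\|\dff v\|_{L^2(\Rdrei)}^2$, then to split off $\tfrac14\|u\|_{L^2(\Rdrei)}^2$) bound $\eps|\phi'(0)|$ times this quantity by $\tfrac14\|u\|_{L^2(\Rdrei)}^2+\tfrac14\|\dff v\|_{L^2(\Rdrei)}^2+C(\eps)$. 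For the confinement term, $W-\tfrac{\lambda_0}{2}|\cdot|^2$ is convex by \eqref{eq:assumpW} and hence admits an affine lower bound, so $W(x)\ge\tfrac{\lambda_0}{4}|x|^2-C$ and therefore $\int_\Rdrei uW\,\dd x\ge\tfrac{\lambda_0}{4}\m_2(u)-C$. Collecting these estimates yields \eqref{eq:Hest}.

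\textbf{Part (b).} If $\liminf_n\calH(u_n,v_n)=+\infty$ there is nothing to show; otherwise I would pass to a subsequence (not relabelled) along which $\calH(u_n,v_n)$ converges to this $\liminf$ and is, in particular, bounded. Then part (a) makes $(u_n)$ bounded in $L^2(\Rdrei)$ with uniformly bounded second moments (so $u_n\rightharpoonup u$ weakly in $L^2(\Rdrei)$, combining the $L^2$-bound with the narrow convergence) and $(v_n)$ bounded in $W^{1,2}(\Rdrei)$ (so $\dff v_n\rightharpoonup\dff v$ weakly in $L^2(\Rdrei)$, whence $v\in W^{1,2}(\Rdrei)$, and, by Rellich--Kondrachov, $v_n\to v$ in $L^2_{\mathrm{loc}}(\Rdrei)$ and a.e.\ along a further subsequence). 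The quadratic parts are then routine: $\int_\Rdrei\tfrac12u^2\,\dd x$ is lower semicontinuous under narrow convergence by Theorem \ref{thm:crit_conv}(a) with $h(r)=\tfrac12r^2$; $\int_\Rdrei uW\,\dd x$ is lower semicontinuous under narrow convergence because $W\ge0$ is continuous (truncate $W$ at level $k$, pass to the limit, let $k\to\infty$); and $\|v\|_{L^2(\Rdrei)}$, $\|\dff v\|_{L^2(\Rdrei)}$ are weakly lower semicontinuous. \emph{The step I expect to be the main obstacle} is the cross term $\eps\int_\Rdrei u_n\phi(v_n)\,\dd x$: writing $\phi=\phi(0)+\psi$ with $\psi:=\phi-\phi(0)$ Lipschitz (constant $|\phi'(0)|$), $\psi(0)=0$, $|\psi(w)|\le|\phi'(0)|w$, and using $\int_\Rdrei(u_n-u)\,\dd x=0$, one is left with $\int_\Rdrei u_n\psi(v_n)\,\dd x$, a product of two sequences converging only \emph{weakly} in $L^2(\Rdrei)$. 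I would show it nonetheless converges to $\int_\Rdrei u\psi(v)\,\dd x$: on each ball $B_R$ this holds because $\psi(v_n)\to\psi(v)$ \emph{strongly} in $L^2(B_R)$ (Rellich--Kondrachov together with the Lipschitz bound on $\psi$) while $u_n\rightharpoonup u$ weakly in $L^2(B_R)$; and the tails are controlled uniformly in $n$, since the second-moment bound gives $\int_{\{|x|>R\}}u_n\,\dd x\le CR^{-2}$ and hence, by the same Sobolev/interpolation chain as in part (a), $\int_{\{|x|>R\}}u_n|\psi(v_n)|\,\dd x\le|\phi'(0)|\int_{\{|x|>R\}}u_nv_n\,\dd x\le CR^{-4/3}$. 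Letting first $n\to\infty$ and then $R\to\infty$ (and noting that the limit is independent of the subsequence, so the convergence holds along the original subsequence) gives $\int_\Rdrei u_n\phi(v_n)\,\dd x\to\int_\Rdrei u\phi(v)\,\dd x$; combined with the lower semicontinuity of the four nonnegative terms, this yields $\calH(u,v)\le\liminf_n\calH(u_n,v_n)$.

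\textbf{Part (c).} With respect to $\dist'$, the geodesics on the finiteness domain of $\calH$ (i.e.\ $u\in L^2_+(\Rdrei)$, $v\in W^{1,2}(\Rdrei)$ with $v\ge0$) are the linear interpolations $t\mapsto((1-t)u_0+tu_1,(1-t)v_0+tv_1)$, which remain in this domain. By the integrand criterion for $L^2$-geodesic convexity recalled in Section \ref{subsec:gf}, in its straightforward vector-valued and explicitly $x$-dependent version, $\calH$ is $\lambda'$-geodesically convex for $\dist'$ provided that, for a.e.\ $x$, the map $(u,v,p)\mapsto\tfrac12u^2+uW(x)+\tfrac12|p|^2+\tfrac\kappa2v^2+\eps u\phi(v)-\tfrac{\lambda'}{2}(u^2+v^2)$ is convex on $[0,\infty)\times[0,\infty)\times\Rdrei$ (here $p$ plays the role of $\dff v$). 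Since $\tfrac12|p|^2$ is convex in $p$ and decoupled and $uW(x)$ is affine in $u$, this reduces to the Hessian in $(u,v)$ of $\tfrac12u^2+\tfrac\kappa2v^2+\eps u\phi(v)$, namely $\begin{pmatrix}1&\eps\phi'(v)\\ \eps\phi'(v)&\kappa+\eps u\phi''(v)\end{pmatrix}$, being $\ge\lambda'\eins$ for all $u\ge0,\ v\ge0$. Using $u\ge0$ and $0\le\phi''\le\bphi$ (so that the lower-right entry is $\ge\kappa$) together with $|\phi'(v)|\le|\phi'(0)|$, I would take $\lambda':=\tfrac12\min(1,\kappa)$: the diagonal entries of the shifted matrix are then $\ge\tfrac12\min(1,\kappa)>0$ and its determinant is $\ge\tfrac14\min(1,\kappa)^2-\eps^2\phi'(0)^2\ge0$ as soon as $\eps<\frac{\min(1,\kappa)}{2|\phi'(0)|}$. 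This proves (c) for every such $\eps$, with $\lambda'=\tfrac12\min(1,\kappa)>0$. I expect this last part to be the most mechanical; the genuine work is the cross-term limit in part (b).
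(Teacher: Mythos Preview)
Your proof is correct and follows essentially the same route as the paper: in (a) the same H\"older--Sobolev--interpolation chain $\int uv\le C\|\dff v\|_{L^2}\|u\|_{L^2}^{1/3}$ followed by Young, in (b) the same ball-plus-tail argument (Rellich--Kondrachov for strong $L^2_{\mathrm{loc}}$ convergence of $\phi(v_n)$, second-moment bound giving the $R^{-4/3}$ tail decay) to show the cross term is actually continuous, and in (c) the same $2\times2$ Hessian computation with $\eps^2\phi'(0)^2<\kappa$. The only cosmetic difference is that the paper computes $\tfrac{\dn^2}{\dn s^2}\calH(u_s,v_s)$ directly rather than invoking the integrand criterion, and in (b) splits via a cutoff $\beta_R$ rather than your $\phi=\phi(0)+\psi$ reduction.
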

\begin{proof}
 For part (a), we observe that due to $\lambda_0$-convexity of $W$, one has $W(x)\ge \frac{\lambda_0}{4}|x|^2-\frac{\lambda_0}{2} |x_{\min}|^2$, where $x_{\min}\in\Rdrei$ is the unique minimizer of $W$. Moreover, with convexity of $\phi$, we deduce
\begin{align*}
\int_\Rdrei u\phi(v)\dd x&\ge \phi(0)+\phi'(0)\|uv\|_{L^1(\Rdrei)}\ge \phi(0)+C\phi'(0) \|\dff v\|_{L^2(\Rdrei)}\|u\|_{L^2(\Rdrei)}^{1/3},
\end{align*}
using that $\|u\|_{L^1(\Rdrei)}=1$ and the following chain of inequalities:
\begin{align}
\label{eq:holsob}
\|uv\|_{L^1(\Rdrei)}&\le \|u\|_{L^{6/5}(\Rdrei)}\|v\|_{L^6(\Rdrei)}\le C\|\dff v\|_{L^2(\Rdrei)}\|u\|_{L^1(\Rdrei)}^{2/3}\|u\|_{L^2(\Rdrei)}^{1/3}.
\end{align}

 All in all, we arrive at
\begin{align*}
\calH(u,v)&\ge \frac12 \|u\|_{L^2(\Rdrei)}^2+\frac{\lambda_0}{4}\m_2(u)-\lambda_0|x_{\min}|^2+\frac12\|\dff v\|_{L^2(\Rdrei)}^2+\frac{\kappa}{2}\|v\|_{L^2(\Rdrei)}^2\\&-\eps|\phi(0)|-\eps C|\phi'(0)|\|\dff v\|_{L^2(\Rdrei)}\|u\|_{L^2(\Rdrei)}^{1/3}.
\end{align*}
From this, the desired estimate follows by means of Young's inequality.

 In (b), the claimed lower semicontinuity of the integral with $\eps=0$ follows from joint convexity of the map
 \begin{align*}
   \R_+\times\R_+\times\Rdrei\ni(r,z,p)\mapsto \frac12r^2+W(x)r+\frac12|p|^2+\frac\kappa2z^2,
 \end{align*}
 for every $x\in\Rdrei$.
 It thus remains to prove semicontinuity of the integral of $u\phi(v)$.
 Let a sequence $(u_n,v_n)_{n\in\N}$ with the mentioned properties be given,
 and assume -- without loss of generality -- that $\calH(u_n,v_n)\to H<\infty$.
 It then follows by \eqref{eq:Hest} that $(u_n)_{n\in\N}$ and $(v_n)_{n\in\N}$ are bounded sequences in $L^2(\Rdrei)$ and in $W^{1,2}(\Rdrei)$, respectively. Moreover, the sequence of second moments $(\m_2(u_n))_{n\in\N}$ is bounded.
 Hence, $(u_n)_{n\in\N}$ converges to $u$ weakly in $L^2(\Rdrei)$,
 and $(v_n)_{n\in\N}$ converges to $v$ weakly in $W^{1,2}(\Rdrei)$ and strongly in $L^2(B_R(0))$, for every ball $B_R(0)\subset\Rdrei$.
 Recalling our assumptions \eqref{eq:assumpphi} on $\phi$,
 we conclude that
 \begin{align*}
   |\phi(v_n)-\phi(v)|^2\le\phi'(0)^2|v_n-v|^2,
 \end{align*}
 and thus $(\phi(v_n))_{n\in\N}$ converges to $\phi(v)$ strongly in $L^2(B_R(0))$. We proceed by a truncation argument. Let therefore $R>0$ and choose $\beta_R\in C^\infty(\R^3)$ with
\begin{align*}
0\le\beta_R\le 1,\quad \beta_R\equiv 1 \text{ on }B_R(0),\quad \beta_R\equiv 0\text{ on }\Rdrei\backslash B_{2R}(0).
\end{align*}
Using the triangle inequality, we see
\begin{align}
\label{eq:lsctriangle}
&\left|\int_\Rdrei (u_n\phi(v_n)-u\phi(v))\dd x\right|\nonumber\\&\le \left|\int_\Rdrei \phi(v)(u_n-u)\dd x\right|+\left|\int_\Rdrei \beta_Ru_n(\phi(v_n)-\phi(v))\dd x\right|+\left|\int_\Rdrei (1-\beta_R)u_n(\phi(v_n)-\phi(v))\dd x\right|.
\end{align}
Since $u_n\rightharpoonup u$ weakly on $L^2(\Rdrei)$ and $\phi(v)\in L^2(\Rdrei)$, the first term in \eqref{eq:lsctriangle} converges to zero. The same holds for the second one due to strong convergence of $\phi(v_n)$ to $\phi(v)$ on $L^2(B_{2R}(0))$ and boundedness of $\|u_n\|_{L^2(\Rdrei)}$. The third term in \eqref{eq:lsctriangle} can be estimated using \eqref{eq:holsob}:
\begin{align*}
\left|\int_\Rdrei (1-\beta_R)u_n(\phi(v_n)-\phi(v))\dd x\right|&\le \|\phi(v_n)-\phi(v)\|_{L^6(\Rdrei)}\|u_n\|_{L^{6/5}(\Rdrei\backslash B_R(0))},
\end{align*}
and consequently
\begin{align*}
\left|\int_\Rdrei (1-\beta_R)u_n(\phi(v_n)-\phi(v))\dd x\right|&\le C\|\phi(v_n)-\phi(v)\|_{W^{1,2}(\Rdrei)}\|u_n\|_{L^2(\Rdrei)}^{1/3}\left(\int_{\Rdrei\backslash B_R(0)}\frac{|x|^2}{R^2}u_n(x)\dd x\right)^{2/3}\\
&\le CR^{-4/3} (\|\phi(v_n)\|_{W^{1,2}(\Rdrei)}+\|\phi(v)\|_{W^{1,2}(\Rdrei)})\|u_n\|_{L^2(\Rdrei)}^{1/3}(\m_2(u_n))^{2/3}\\&\le 2\tilde CR^{-4/3}.
\end{align*}
Hence, the following holds for all $R>0$:
\begin{align*}
\limsup_{n\to\infty}\left|\int_\Rdrei (u_n\phi(v_n)-u\phi(v))\dd x\right|&\le 2\tilde CR^{-4/3},
\end{align*}
proving the claim.

 Finally, to prove (c), consider a geodesic $w_s=(u_s,v_s)$ with respect to the flat metric $\dist'$,
 that is $u_s=(1-s)u_0+su_1$ and $v_s=(1-s)v_0+sv_1$ for given $u_0,u_1\in(\prb\cap L^2)(\Rdrei)$ and $v_0,v_1\in W^{1,2}(\Rdrei)$.
 It then follows that
 \begin{align*}
   \frac{\dn^2}{\dn s^2}\calH(u_s,v_s)
   &=\int_\Rdrei\big((u_1-u_0)^2+|\dff(v_1-v_0)|^2+\kappa(v_1-v_0)^2 \\
   & \qquad +2\eps \phi'(v_s) (u_1-u_0)(v_1-v_0)+\eps u_s\phi''(v_s)(v_1-v_0)^2\big)\dd x \\
   &\ge \int_\Rdrei
   \begin{pmatrix} u_1-u_0 \\ v_1-v_0 \end{pmatrix}^\mathrm{T}A_s\begin{pmatrix} u_1-u_0 \\ v_1-v_0 \end{pmatrix} \dd x
   \quad \text{with} \quad A_s := \begin{pmatrix} 1 & \eps\phi'(v_s) \\ \eps\phi'(v_s) & \kappa \end{pmatrix},
 \end{align*}
 where we have used that $\phi$ is convex.
 Thus, $\calH$ is $\lambda'$-convex with respect to $\dist'$ if $A_s\ge\lambda'\eins$ for all $s\in[0,1]$.
 Recalling that $0<-\phi'(v_s)\le-\phi'(0)$ by hypothesis \eqref{eq:assumpphi},
 it follows from elementary linear algebra that $\eps^2\phi'(0)^2<\kappa$ is sufficient to find a suitable $\lambda'>0$ with $A_s\ge\lambda'\eins$.
\end{proof}

\section{Existence of weak solutions}\label{sec:existence}
In this section, we prove Theorem \ref{thm:existence} by construction of a weak solution using the minimizing movement scheme.

\subsection{Time discretization}\label{subsec:discr_constr}
Recall the discretization scheme from \eqref{eq:jko}. We introduce the step size $\tau>0$ and define the associated Yoshida penalization $\calH_\tau$ of the entropy by
\begin{align}
 \label{eq:yoshida}
 \calH_\tau(u,v\,|\,\tilde u,\tilde v):=\frac{1}{2\tau}\dist^2((u,v),(\tilde u,\tilde v))+\calH(u,v)
\end{align}
for all $(u,v),(\tilde u,\tilde v)\in\theX$.
Set $(u_\tau^0,v_\tau^0):=(u_0,v_0)$ and define the sequence $(u_\tau^n,v_\tau^n)_{n\in\N}$ inductively by choosing
\begin{align}
(u_\tau^n,v_\tau^n)\in\operatorname*{argmin}_{(u,v)\in\theX}\calH_\tau(u,v\,|\,u_\tau^{n-1},v_\tau^{n-1}).\label{eq:minmov}
\end{align}
\begin{lemma}
 \label{prop:minmov}
 For every $(\tilde u,\tilde v)\in \theX$, there exists at least one minimizer $(u,v)\in\theX$ of $\calH_\tau(\cdot|\,\tilde u,\tilde v)$
 that satisfies $u\in L^2(\Rdrei)$ and $v\in W^{1,2}(\Rdrei)$.
\end{lemma}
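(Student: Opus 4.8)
The plan is to establish the existence of a minimizer via the direct method of the calculus of variations, exploiting the coercivity and lower semicontinuity properties of $\calH$ already recorded in Proposition \ref{prop:propH}, and then to argue separately that any minimizer automatically enjoys the additional regularity $u\in L^2(\Rdrei)$ and $v\in W^{1,2}(\Rdrei)$. First I would observe that $\calH_\tau(\cdot\,|\,\tilde u,\tilde v)$ is proper: choosing the competitor $(u,v)=(\tilde u,\tilde v)$ (assuming it has finite entropy; otherwise a mollified/truncated version of it) gives a finite value, so the infimum $m:=\inf_{(u,v)\in\theX}\calH_\tau(u,v\,|\,\tilde u,\tilde v)$ is a real number, bounded below by Proposition \ref{prop:propH}(a). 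Take a minimizing sequence $(u_n,v_n)_{n\in\N}$ in $\theX$. Since $\calH_\tau(u_n,v_n\,|\,\tilde u,\tilde v)$ is bounded, both $\calH(u_n,v_n)$ and $\dist^2((u_n,v_n),(\tilde u,\tilde v))$ are bounded along the sequence; by the estimate \eqref{eq:Hest}, the sequence $(u_n)$ is bounded in $L^2(\Rdrei)$ with bounded second moments $\m_2(u_n)$, and $(v_n)$ is bounded in $W^{1,2}(\Rdrei)$.

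Next I would extract a convergent subsequence. Boundedness of $(u_n)$ in $L^2(\Rdrei)$ together with the uniform second-moment bound and tightness gives — after passing to a subsequence — narrow convergence $u_n\to u$ in $\prb(\Rdrei)$ to some limit with $\m_2(u)<\infty$ (and simultaneously weak $L^2$-convergence), while boundedness of $(v_n)$ in $W^{1,2}(\Rdrei)$ yields, along a further subsequence, weak convergence $v_n\rightharpoonup v$ in $W^{1,2}(\Rdrei)$, hence weak convergence in $L^2(\Rdrei)$; in particular $v\ge0$ a.e.\ is preserved, so $(u,v)\in\theX$. Now both building blocks of $\calH_\tau$ are lower semicontinuous along this mode of convergence: $\calH(u,v)\le\liminf_n\calH(u_n,v_n)$ by Proposition \ref{prop:propH}(b), and $\dist((u,v),(\tilde u,\tilde v))\le\liminf_n\dist((u_n,v_n),(\tilde u,\tilde v))$ by Lemma \ref{lem:lsc_dist}. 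Adding the two inequalities yields $\calH_\tau(u,v\,|\,\tilde u,\tilde v)\le m$, and since $m$ is the infimum we conclude that $(u,v)$ is a minimizer.

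Finally, for the asserted regularity: the minimizer necessarily has $\calH(u,v)<\infty$ (it equals $m-\frac1{2\tau}\dist^2((u,v),(\tilde u,\tilde v))<\infty$), and by the definition \eqref{eq:H} of $\calH$ the value is finite \emph{only} on $\theX\cap(L^2(\Rdrei)\times W^{1,2}(\Rdrei))$ — outside that set $\calH$ is set to $+\infty$ by convention. Hence automatically $u\in L^2(\Rdrei)$ and $v\in W^{1,2}(\Rdrei)$, which is exactly the claimed additional regularity. I expect the only genuine subtlety to be the compactness step for $(u_n)$: one must convert the $L^2$-plus-second-moment bound into narrow convergence (tightness via the uniform bound on $\m_2(u_n)$, i.e.\ a Markov-type estimate $\int_{|x|>R}u_n\,\dn x\le R^{-2}\m_2(u_n)$), and then reconcile the narrow limit with the weak $L^2$-limit to see they coincide; everything else is a routine application of the already-established lower semicontinuity statements. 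There is also the minor point of checking that the infimum is not $+\infty$, which is handled by exhibiting one finite-energy competitor.
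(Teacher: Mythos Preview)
Your proposal is correct and follows essentially the same direct-method argument as the paper. The only cosmetic difference is that the paper extracts compactness from the distance term (boundedness of $\W_2(u,\tilde u)$ gives the second-moment bound, and boundedness of $\|v-\tilde v\|_{L^2}$ gives the $L^2$-bound on $v$), whereas you pull the bounds from the entropy via \eqref{eq:Hest}; both routes feed into the same lower-semicontinuity lemmas (Proposition~\ref{prop:propH}(b) and Lemma~\ref{lem:lsc_dist}) and the same regularity observation that the proper domain of $\calH$ is contained in $L^2(\Rdrei)\times W^{1,2}(\Rdrei)$.
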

\begin{proof}
 The proof is an application of the direct methods from the calculus of variations to the functional $\calH_\tau(\cdot|\,\tilde u,\tilde v)$.

 First, observe that on any given sublevel $S$ of $\calH_\tau(\cdot|\,\tilde u,\tilde v)$,
 both $\W_2(u,\tilde u)$ and $\|v\|_{L^2(\Rdrei)}$ are uniformly bounded.
 The first bound implies that also the second moment $\m_2(u)$ is uniformly bounded,
 and thus the $u$-components in $S$ belong to a subset of $\prb(\Rdrei)$ that is relatively compact in the narrow topology by Prokhorov's theorem.
 The other bound implies via Alaoglu's theorem that the $v$-components belong to a weakly relatively compact subset of $L^2(\Rdrei)$.

 Next, recall the properties of $\calH$ and of $\dist$ given in Proposition \ref{prop:propH} and Lemma \ref{lem:lsc_dist}.
 From these, it follows that  $\calH_\tau(\cdot|\,\tilde u,\tilde v)$ is lower semicontinuous
 with respect to narrow convergence in the first and $L^2$-weak convergence in the second component.

 Combining these properties with the fact that $\calH_\tau(\cdot|\,\tilde u,\tilde v)$ is bounded from below (e.g.\ by zero),
 the existence of a minimizer follows.
 The additional regularity is a consequence of the fact that the proper domain of $\calH$ is a subset of $L^2(\Rdrei)\times W^{1,2}(\Rdrei)$.
\end{proof}

Given the sequence $(u_\tau^n,v_\tau^n)_{n\in\N}$, define the \emph{discrete solution} $(u_\tau,v_\tau):[0,\infty)\to\theX$ as in \eqref{eq:interpol} by piecewise constant interpolation:
\begin{align}
 (u_\tau,v_\tau)(t)&:=(u_\tau^n,v_\tau^n)\text{ for }t\in ((n-1)\tau,n\tau]\text{ and }n\ge 1.\label{eq:disc_sol}
\end{align}

We start be recalling a collection of estimates on $(u_\tau,v_\tau)$ that follows immediately from the construction by minimizing movements.
\begin{prop}[Classical estimates]\label{prop:class}
 The following holds for $T>0$:
 \begin{align}
   \calH(u_\tau^n,v_\tau^n)&\le \calH(u_0,v_0)<\infty\qquad\forall n\ge 0,\label{eq:apriori1}\\
   \sum_{n=1}^\infty \W_2^2(u_\tau^n,u_\tau^{n-1})&\le 2\tau(\calH(u_0,v_0)-\inf\calH),\label{eq:apriori2}\\
\W_2(u_\tau(s),u_\tau(t))&\le\left[2(\calH(u_0,v_0)-\inf\calH)\max(\tau,|t-s|)\right]^{1/2}\qquad\forall 0\le s,t\le T,\label{eq:apriori3}\\
   \sum_{n=1}^\infty \|v_\tau^n-v_\tau^{n-1}\|_{L^2(\Rdrei)}^2&\le 2\tau(\calH(u_0,v_0)-\inf\calH),\label{eq:apriori4}\\
\|v_\tau(s)-v_\tau(t)\|_{L^2(\Rdrei)}&\le\left[2(\calH(u_0,v_0)-\inf\calH)\max(\tau,|t-s|)\right]^{1/2}\qquad\forall 0\le s,t\le T,\label{eq:apriori5}
 \end{align}
the infimum $\inf\calH$ of $\calH$ on $\theX$ being finite.
\end{prop}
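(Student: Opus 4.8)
All five inequalities are standard consequences of the minimizing movement construction, so the plan is to extract them directly from the defining variational property \eqref{eq:minmov} of the sequence $(u_\tau^n,v_\tau^n)_{n\in\N}$, together with the lower bound on $\calH$ from Proposition \ref{prop:propH}(a). First I would record that $\inf_\theX\calH$ is finite: this is immediate from \eqref{eq:Hest}, since the right-hand side there is bounded below by $-C_0C_1$. For \eqref{eq:apriori1}, I would test the minimality of $(u_\tau^n,v_\tau^n)$ against the competitor $(u_\tau^{n-1},v_\tau^{n-1})$ in \eqref{eq:yoshida}: this yields
\begin{align*}
  \frac{1}{2\tau}\dist^2\big((u_\tau^n,v_\tau^n),(u_\tau^{n-1},v_\tau^{n-1})\big)+\calH(u_\tau^n,v_\tau^n)\le\calH(u_\tau^{n-1},v_\tau^{n-1}),
\end{align*}
so in particular $\calH(u_\tau^n,v_\tau^n)\le\calH(u_\tau^{n-1},v_\tau^{n-1})$; iterating in $n$ and using $(u_\tau^0,v_\tau^0)=(u_0,v_0)$ gives \eqref{eq:apriori1}, the finiteness $\calH(u_0,v_0)<\infty$ being guaranteed by the hypothesis $u_0\in\prb(\Rdrei)\cap L^2(\Rdrei)$, $v_0\in W^{1,2}(\Rdrei)$ placing $(u_0,v_0)$ in the proper domain of $\calH$.

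For the summed displacement bounds \eqref{eq:apriori2} and \eqref{eq:apriori4}, I would sum the displayed inequality above over $n=1,\dots,N$; the entropy terms telescope, leaving
\begin{align*}
  \sum_{n=1}^N\frac{1}{2\tau}\dist^2\big((u_\tau^n,v_\tau^n),(u_\tau^{n-1},v_\tau^{n-1})\big)\le\calH(u_0,v_0)-\calH(u_\tau^N,v_\tau^N)\le\calH(u_0,v_0)-\inf\calH.
\end{align*}
Since $\dist^2=\W_2^2+\|\cdot\|_{L^2}^2$ by \eqref{eq:dist}, dropping the nonnegative $v$-part (resp. $u$-part) and letting $N\to\infty$ yields \eqref{eq:apriori2} (resp. \eqref{eq:apriori4}). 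For the Hölder-in-time estimates \eqref{eq:apriori3} and \eqref{eq:apriori5}, I would fix $0\le s\le t\le T$, let $m,n$ be the indices with $s\in((m-1)\tau,m\tau]$, $t\in((n-1)\tau,n\tau]$ (so $n-m\le |t-s|/\tau+1\le 2\max(\tau,|t-s|)/\tau$), and use the triangle inequality for $\W_2$ together with the Cauchy–Schwarz inequality:
\begin{align*}
  \W_2(u_\tau(s),u_\tau(t))\le\sum_{k=m+1}^n\W_2(u_\tau^k,u_\tau^{k-1})\le (n-m)^{1/2}\Big(\sum_{k=m+1}^n\W_2^2(u_\tau^k,u_\tau^{k-1})\Big)^{1/2}.
\end{align*}
Bounding the sum by \eqref{eq:apriori2}, i.e. $\sum_k\W_2^2(u_\tau^k,u_\tau^{k-1})\le 2\tau(\calH(u_0,v_0)-\inf\calH)$, and $(n-m)\le 2\max(\tau,|t-s|)/\tau$ gives exactly \eqref{eq:apriori3}; the case $s=t$ or $m=n$ is trivial since the left side is zero. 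The identical argument with $\|v_\tau^k-v_\tau^{k-1}\|_{L^2(\Rdrei)}$ in place of $\W_2$ and \eqref{eq:apriori4} in place of \eqref{eq:apriori2} gives \eqref{eq:apriori5}.

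There is no real obstacle here; the only points requiring a word of care are that $\inf_\theX\calH>-\infty$ (handled via \eqref{eq:Hest}) so that all right-hand sides make sense, and the bookkeeping in the index count $n-m$ to produce the factor $\max(\tau,|t-s|)$ rather than just $|t-s|$ — the $\max$ absorbs the "$+1$" coming from the piecewise-constant interpolation when $|t-s|<\tau$. Everything else is the textbook minimizing-movement estimate, and since these facts are proved in the same form in \cite{savare2008} and in \cite{blanchet2012,zinsl2012}, I would keep the write-up terse.
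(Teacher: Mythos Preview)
Your proposal is correct and is exactly the standard minimizing-movement argument the paper has in mind; indeed the paper gives no proof of this proposition, labeling the estimates ``classical'' and relying implicitly on the very computation you wrote out (compare \cite[Lemma 3.2.2]{savare2008}). The only quibble is the bookkeeping in the last step: your bound $n-m\le 2\max(\tau,|t-s|)/\tau$ combined with \eqref{eq:apriori2} actually yields $[4(\calH(u_0,v_0)-\inf\calH)\max(\tau,|t-s|)]^{1/2}$ rather than the constant $2$ stated in \eqref{eq:apriori3}--\eqref{eq:apriori5}, but this harmless factor is irrelevant for every subsequent use of the estimate (compactness and H\"older continuity of the limit curve).
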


By the well-known \emph{JKO method} \cite{jko1998}, we derive an approximate weak fomulation satisfied by $(u_\tau,v_\tau)$.
The idea is to choose test functions $\eta,\gamma\in C^\infty_c(\Rdrei)$
and perturb the minimizer $(u_\tau^n,v_\tau^n)$ of the functional $\calH_\tau(\,\cdot\,|\,u_\tau^{n-1},v_\tau^{n-1})$
over an auxiliary time $s\ge0$ as follows:
\begin{align*}
 u_\tau^n \rightsquigarrow \flow{s}{{\dff\eta}}\#u_\tau^n, \quad v_\tau^n \rightsquigarrow v+s\gamma.
\end{align*}
Here $\flow{{(\cdot)}}{{\dff\eta}}$ is the flow on $\Rdrei$ generated by the gradient vector field $\dff\eta$.
Since the calculations are very similar to the ones performed in \cite{zinsl2012}, we skip the details and directly state the result.
\begin{lemma}
 For all $n\in\N$ and all test functions $\eta,\gamma\in C^\infty_c(\Rdrei)$ and $\psi\in C^\infty_c((0,\infty))\cap C([0,\infty))$, the following \emph{discrete weak formulation} holds:
 \begin{align}
   \label{eq:discrete_weak}
   \begin{split}
0=\int_0^\infty\int_\Rdrei&\left[u_\tau(t,x)\eta(x)-v_\tau(t,x)\gamma(x)\right]
     \frac{\psi\left(\left\lfloor \frac{t}{\tau}\right\rfloor\tau\right)-\psi\left(\left\lfloor\frac{t}{\tau}\right\rfloor\tau+\tau\right)}{\tau}\dd x\dd t
     +O(\tau)\\
     +\int_0^\infty\int_\Rdrei &\psi\left(\left\lfloor \frac{t}{\tau}\right\rfloor\tau\right)\bigg(-\frac{1}{2}u_\tau(t,x)^2\Delta\eta(x)+u_\tau(t,x)\dff W(x)\cdot\dff\eta(x) \\
     &\qquad +\dff v_\tau(t,x)\cdot\dff\gamma(x)+\kappa v_\tau(t,x)\gamma(x)\\
     &\qquad +\eps u_\tau(t,x)\phi'(v_\tau(t,x))[\gamma(x)+\dff v_\tau(t,x)\cdot\dff\eta(x)]\bigg)\dd x\dd t.
   \end{split}
\end{align}
\end{lemma}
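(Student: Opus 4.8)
I would establish \eqref{eq:discrete_weak} by the classical JKO device: reading off Euler--Lagrange identities from the minimality of $(u_\tau^n,v_\tau^n)$ under two independent variations. For the first component I perturb along the flow of the compactly supported field $\dff\eta$, replacing $u_\tau^n$ by $\flow{s}{\dff\eta}\#u_\tau^n$ for $s\in\R$; since $\flow{s}{\dff\eta}$ is a smooth diffeomorphism of $\Rdrei$ equal to the identity outside $\mathrm{supp}\,\dff\eta$, the perturbed measure is again a probability measure with finite second moment, hence an admissible competitor. For the second component I replace $v_\tau^n$ by $v_\tau^n+s\gamma$, which belongs to $W^{1,2}(\Rdrei)$ for every $s\in\R$ by Lemma \ref{prop:minmov} (the scheme being posed over $L^2(\Rdrei)$ in \eqref{eq:jko}, no positivity constraint intervenes). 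In both cases $s$ ranges over all of $\R$, and minimality yields genuine \emph{equalities}, namely that the $s$-derivative at $s=0$ of $\calH_\tau(\,\cdot\,|\,u_\tau^{n-1},v_\tau^{n-1})$ along each perturbation vanishes.

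The next step is to compute these derivatives term by term. Differentiating $\calH$ along $v_\tau^n+s\gamma$ contributes at $s=0$ the bulk term $\int_\Rdrei\big(\dff v_\tau^n\cdot\dff\gamma+\kappa v_\tau^n\gamma+\eps u_\tau^n\phi'(v_\tau^n)\gamma\big)\dd x$, while differentiating $\calH$ along $\flow{s}{\dff\eta}\#u_\tau^n$, using the transport identity $\partial_s(\flow{s}{\dff\eta}\#u_\tau^n)+\dv\big((\flow{s}{\dff\eta}\#u_\tau^n)\,\dff\eta\big)=0$, contributes $\int_\Rdrei\big(-\tfrac12(u_\tau^n)^2\Delta\eta+u_\tau^n\,\dff W\cdot\dff\eta+\eps u_\tau^n\phi'(v_\tau^n)\,\dff v_\tau^n\cdot\dff\eta\big)\dd x$; the hypotheses \eqref{eq:assumpphi} on $\phi$, the quadratic growth of $W$, and the regularity $u_\tau^n\in L^2(\Rdrei)$, $v_\tau^n\in W^{1,2}(\Rdrei)$ make all these differentiations legitimate by dominated convergence. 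For the metric term $\tfrac1{2\tau}\dist^2(\,\cdot\,,(u_\tau^{n-1},v_\tau^{n-1}))$, the $L^2$-part contributes $\tfrac1\tau\int_\Rdrei(v_\tau^n-v_\tau^{n-1})\gamma\dd x$, and the $\W_2^2$-part contributes $\tfrac1\tau\int_\Rdrei\big(x-t_\tau^n(x)\big)\cdot\dff\eta(x)\,u_\tau^n(x)\dd x$, where $t_\tau^n$ is the optimal transport map from $u_\tau^n$ to $u_\tau^{n-1}$. A second-order Taylor expansion of $\eta$ at $t_\tau^n(x)$ about $x$, combined with the push-forward identity $\int_\Rdrei\eta(t_\tau^n(x))u_\tau^n(x)\dd x=\int_\Rdrei\eta\,u_\tau^{n-1}\dd x$, rewrites this $\W_2^2$-contribution as $\tfrac1\tau\int_\Rdrei\eta\,(u_\tau^n-u_\tau^{n-1})\dd x+\tfrac1\tau R_\tau^n$ with a remainder obeying $|R_\tau^n|\le\tfrac12\|\dff^2\eta\|_{L^\infty(\Rdrei)}\W_2^2(u_\tau^n,u_\tau^{n-1})$.

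To assemble \eqref{eq:discrete_weak} I would combine the two Euler--Lagrange identities at step $n$, multiply by $\tau\psi((n-1)\tau)$, and sum over $n\ge1$. A discrete integration by parts (Abel summation) transfers the increments $u_\tau^n-u_\tau^{n-1}$, $v_\tau^n-v_\tau^{n-1}$ onto $\psi$, producing on the $n$-th time interval the difference quotient $\tfrac1\tau\big(\psi((n-1)\tau)-\psi(n\tau)\big)$, with no boundary term because $\psi$ is supported away from $t=0$; expressing the sums as time integrals of the piecewise constant interpolant $(u_\tau,v_\tau)$ then reproduces exactly the first two integrals of \eqref{eq:discrete_weak}. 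Finally, the accumulated remainder $\sum_{n\ge1}\psi((n-1)\tau)R_\tau^n$ is bounded, by the classical estimate \eqref{eq:apriori2}, by $\|\psi\|_{L^\infty}\,\|\dff^2\eta\|_{L^\infty(\Rdrei)}\sum_{n\ge1}\W_2^2(u_\tau^n,u_\tau^{n-1})\le C\tau$, which is the term written $O(\tau)$.

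The only genuinely delicate step is the one underlying the second paragraph: the existence and value of the $s$-derivative at $s=0$ of $s\mapsto\W_2^2(\flow{s}{\dff\eta}\#u_\tau^n,u_\tau^{n-1})$ — or, more precisely, the derivation of the Euler--Lagrange identity in which this quantity's derivative is identified with $\tfrac1\tau\int_\Rdrei(x-t_\tau^n(x))\cdot\dff\eta(x)\,u_\tau^n(x)\dd x$. One bounds $\W_2^2(\flow{s}{\dff\eta}\#u_\tau^n,u_\tau^{n-1})$ from above by the transport cost of the (in general non-optimal) map $t_\tau^n\circ(\flow{s}{\dff\eta})^{-1}$, which yields a differentiable upper bound agreeing with it at $s=0$, and then combines the resulting one-sided estimates for $s\downarrow0$ and $s\uparrow0$ with minimality of $(u_\tau^n,v_\tau^n)$; this uses Brenier's theorem (uniqueness of $t_\tau^n$) and the uniform Lipschitz bound on $\dff\eta$. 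This is by now a routine ingredient of the JKO machinery, carried out in detail for a closely related system in \cite{zinsl2012}, so — as the authors do — I would only sketch it; the remainder of the proof is the bookkeeping described above, and I anticipate no further obstacle.
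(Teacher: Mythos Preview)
Your proposal is correct and follows precisely the approach the paper itself indicates: perturbing $u_\tau^n$ by the flow $\flow{s}{\dff\eta}\#u_\tau^n$ and $v_\tau^n$ by $v_\tau^n+s\gamma$, differentiating the Yoshida functional at $s=0$, handling the Wasserstein term via the optimal map and a Taylor expansion of $\eta$, and then summing in $n$ with Abel summation to produce the discrete difference quotient of $\psi$ and the $O(\tau)$ remainder controlled by \eqref{eq:apriori2}. The paper does not spell any of this out---it merely names the two perturbations and refers to \cite{zinsl2012} for the calculations---so your write-up is in fact more detailed than the paper's own treatment, but methodologically identical.
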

Our goal for the rest of this section is to pass to the limit $\tau\downarrow0$ in \eqref{eq:discrete_weak} and obtain the (time-continuous) weak formulation \eqref{eq:pde_weak1}\&\eqref{eq:pde_weak2}.

\subsection{Regularity of the discrete solution}
Since the discrete weak formulation \eqref{eq:discrete_weak} contains nonlinear terms with respect to $u_\tau$ and $v_\tau$,
further compactness estimates are needed to pass to the continuous time limit $\tau\to 0$. As a preparation, we state the following lemma:

\begin{lemma}[Flow interchange lemma {\cite[Thm. 3.2]{matthes2009}}]
 \label{lem:flowinterchange}
 Let $\auxil$ be a proper, lower semicontinuous and $\lambda$-geodesically convex functional on $(\theX,\dist)$,
 which is defined at least on $\theX\cap L^2(\Rdrei)\times W^{1,2}(\Rdrei)$.
 Further, assume that $\flow{(\cdot)}{\auxil}$ is a $\lambda$-flow for $\auxil$.
 Then, the following holds for every $n\in\N$:
 \begin{align*}
   \auxil(u_\tau^n,v_\tau^n)+\tau \mathrm{D}^\auxil\calH(u_\tau^n,v_\tau^n)+\frac{\lambda}{2}\dist^2((u_\tau^n,v_\tau^n),(u_\tau^{n-1},v_\tau^{n-1}))&\le \auxil(u_\tau^{n-1},v_\tau^{n-1}).
 \end{align*}
 There, $\mathrm{D}^\auxil\calH(w)$ denotes the \textit{dissipation} of the entropy $\calH$ along $\flow{(\cdot)}{\auxil}$,
 i.e.
 \begin{align*}
   \mathrm{D}^\auxil\calH(w):=\limsup_{h\downarrow 0}\frac{\calH(w)-\calH(\flow{h}{\auxil}(w))}{h}.
 \end{align*}
\end{lemma}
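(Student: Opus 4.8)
The plan is to exploit the minimizing property \eqref{eq:minmov} of $(u_\tau^n,v_\tau^n)$: I would compare this minimizer against the competitor obtained by running the auxiliary $\lambda$-flow $\flow{(\cdot)}{\auxil}$ for an infinitesimal time $h>0$, and then send $h\downarrow0$. This turns the ``frozen'' variational estimate into a differential estimate that couples the metric displacement, the value of $\auxil$, and the dissipation $\mathrm{D}^\auxil\calH$.

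Concretely, fix $n$, write $w:=(u_\tau^n,v_\tau^n)$ and $\tilde w:=(u_\tau^{n-1},v_\tau^{n-1})$, and for $h>0$ use $\flow{h}{\auxil}(w)\in\theX$ as a competitor in the problem solved by $w$. Minimality of $\calH_\tau(\,\cdot\,|\,\tilde w)$ gives
\[
 \frac{1}{2\tau}\dist^2(w,\tilde w)+\calH(w)\le \frac{1}{2\tau}\dist^2\big(\flow{h}{\auxil}(w),\tilde w\big)+\calH\big(\flow{h}{\auxil}(w)\big),
\]
and after rearranging and dividing by $h>0$,
\[
 \frac{\calH(w)-\calH\big(\flow{h}{\auxil}(w)\big)}{h}\le \frac{1}{2\tau}\cdot\frac{\dist^2\big(\flow{h}{\auxil}(w),\tilde w\big)-\dist^2(w,\tilde w)}{h}.
\]
Taking $\limsup_{h\downarrow0}$ of both sides: the left-hand side is by definition $\mathrm{D}^\auxil\calH(w)$, while the right-hand side is $\tfrac{1}{2\tau}$ times the upper right Dini derivative of $t\mapsto\dist^2(\flow{t}{\auxil}(w),\tilde w)$ at $t=0$ (using $\flow{0}{\auxil}(w)=w$). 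Hence $\tau\,\mathrm{D}^\auxil\calH(w)\le \tfrac12\frac{\dn^+}{\dn t}\big|_{t=0}\dist^2\big(\flow{t}{\auxil}(w),\tilde w\big)$.

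Next I would feed in the evolution variational inequality \eqref{eq:evi} for the $\lambda$-flow $\flow{(\cdot)}{\auxil}$ with this pair $w,\tilde w$, evaluated at $t=0$, namely
\[
 \tfrac12\tfrac{\dn^+}{\dn t}\Big|_{t=0}\dist^2\big(\flow{t}{\auxil}(w),\tilde w\big)+\tfrac{\lambda}{2}\dist^2(w,\tilde w)+\auxil(w)\le\auxil(\tilde w),
\]
and chain it with the previous bound to cancel the metric-derivative term. This yields precisely
\[
 \auxil(u_\tau^n,v_\tau^n)+\tau\,\mathrm{D}^\auxil\calH(u_\tau^n,v_\tau^n)+\frac{\lambda}{2}\dist^2\big((u_\tau^n,v_\tau^n),(u_\tau^{n-1},v_\tau^{n-1})\big)\le\auxil(u_\tau^{n-1},v_\tau^{n-1}),
\]
which is the assertion.

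I expect the only genuinely delicate points to be the following. First, one must make sure the competitor is admissible and the difference quotient meaningful: $\flow{h}{\auxil}(w)$ must lie in $\theX$ (so the minimality inequality applies) and $\calH$ must stay controlled along the flow so that $\mathrm{D}^\auxil\calH(w)$ is informative — this is where the hypothesis that $\auxil$ be defined on $\theX\cap(L^2(\Rdrei)\times W^{1,2}(\Rdrei))$, and that its flow map into this set, enters (otherwise the estimate is still true but vacuous). Second, only one-sided (upper Dini) derivatives of $t\mapsto\dist^2(\flow{t}{\auxil}(w),\tilde w)$ are available, but this is exactly the form in which \eqref{eq:evi} supplies the bound, so the two one-sided estimates line up; passing $\limsup$ through an inequality is harmless since it is monotone. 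The argument is the abstract flow-interchange estimate of Matthes, McCann and Savaré \cite[Thm.~3.2]{matthes2009}, to which I would refer for the remaining technical details.
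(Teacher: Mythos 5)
Your argument is correct. The paper itself gives no proof of this lemma (it is quoted verbatim from \cite[Thm.~3.2]{matthes2009}), and your reconstruction is essentially the argument of that reference: compare the minimizer of $\calH_\tau(\cdot\,|\,\tilde w)$ against the competitor $\flow{h}{\auxil}(w)$, divide by $h$, take $\limsup_{h\downarrow0}$, and close the estimate with the evolution variational inequality. The only stylistic deviation is that you invoke \eqref{eq:evi} pointwise at $t=0$ (legitimate here because the paper's definition of a $\kappa$-flow asserts the EVI for \emph{all} $t\ge0$), whereas Matthes, McCann and Savaré first integrate the EVI over $[0,h]$, divide by $h$, and use lower semicontinuity of $\auxil$ along the flow together with continuity of the metric to pass to the limit; the integrated version is slightly more robust if one only knows the EVI for a.e.\ $t>0$, but under the hypotheses as stated both routes give the same conclusion.
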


The necessary additional regularity is provided by the following estimate on the minimizers of $\calH_\tau$:
\begin{prop}[Additional regularity]\label{prop:reg_min}
 Let $(u,v),(\tilde u,\tilde v)\in\theX\cap(L^2(\Rdrei)\times W^{1,2}(\Rdrei))$ with \\$(u,v)\in\mathrm{argmin }\calH_\tau(\,\cdot\,|\,\tilde u,\tilde v)$. Denoting $\mathcal{E}(u):=\int_\Rdrei u\log(u)\dd x$ and $\mathcal{F}(v):=\int_\Rdrei \left(\frac{1}{2}|\dff v|^2+\frac{\kappa}{2}v^2\right)\dd x$, the following estimate holds for some constant $K>0$:
 \begin{align}
   &\|\dff u\|_{L^2(\Rdrei)}^2+\|\Delta v-\kappa v\|_{L^2(\Rdrei)}^2\nonumber\\&\le K\left(\|u\|_{L^2(\Rdrei)}^{2}+\|v\|_{W^{1,2}(\Rdrei)}^{2}+\|\Delta W\|_{L^\infty(\Rdrei)}+\frac{1}{\tau}\left(\mathcal{E}(\tilde u)-\mathcal{E}(u)+\mathcal{F}(\tilde v)-\mathcal{F}(v)\right)\right).\label{eq:reg_min}
 \end{align}
\end{prop}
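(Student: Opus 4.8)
The plan is to apply the flow interchange lemma (Lemma~\ref{lem:flowinterchange}) twice, with two different auxiliary functionals, and to add the resulting inequalities. The natural choices are the Boltzmann entropy $\mathcal{E}(u)=\int_\Rdrei u\log u\dd x$, whose Wasserstein gradient flow is the heat equation $\partial_t u=\Delta u$, and the Dirichlet-type energy $\mathcal{F}(v)=\int_\Rdrei(\tfrac12|\dff v|^2+\tfrac\kappa2v^2)\dd x$, whose $L^2$-gradient flow is $\partial_t v=\Delta v-\kappa v$. Both are lower semicontinuous and geodesically $\lambda$-convex on their respective spaces ($\mathcal{E}$ with $\lambda=0$ by McCann; $\mathcal{F}$ with $\lambda=\kappa$), so the separable functional $\auxil(u,v):=\mathcal{E}(u)+\mathcal{F}(v)$ is admissible for Lemma~\ref{lem:flowinterchange} with $\lambda=\min(0,\kappa)=0$. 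The lemma then gives
\begin{align*}
  \tau\,\mathrm{D}^\auxil\calH(u,v)\le \mathcal{E}(\tilde u)-\mathcal{E}(u)+\mathcal{F}(\tilde v)-\mathcal{F}(v),
\end{align*}
so everything reduces to bounding the entropy dissipation $\mathrm{D}^\auxil\calH(u,v)$ from below by (a constant times) $\|\dff u\|_{L^2}^2+\|\Delta v-\kappa v\|_{L^2}^2$ minus the lower-order terms on the right-hand side of~\eqref{eq:reg_min}.

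The core computation is therefore the formal differentiation of $\calH$ along the flow $\flow{s}{\auxil}(u,v)=(\heat_s u, \,\flow{s}{\mathcal{F}}v)$ at $s=0$. Writing $\partial_s u = \Delta u$ and $\partial_s v = \Delta v-\kappa v$ and using the first-variation formulas $\delta\calH/\delta u = u+W+\eps\phi(v)$, $\delta\calH/\delta v = -\Delta v+\kappa v+\eps u\phi'(v)$, one gets
\begin{align*}
  -\frac{\dn}{\dn s}\calH(\flow{s}{\auxil}(u,v))\Big|_{s=0}
  &= -\int_\Rdrei \big(u+W+\eps\phi(v)\big)\Delta u\dd x + \int_\Rdrei \big(-\Delta v+\kappa v+\eps u\phi'(v)\big)^2\dd x.
\end{align*}
Integrating by parts in the first integral produces $\int |\dff u|^2 + \int \dff W\cdot\dff u\,u + \eps\int\phi'(v)\dff v\cdot\dff u$; the second integral already contains $\|\Delta v-\kappa v\|_{L^2}^2$ in full plus cross terms with $\eps u\phi'(v)$ and the square $\eps^2\int u^2\phi'(v)^2$. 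One then absorbs the $W$-term using $|\dff W\cdot\dff u\,u|\le \tfrac14|\dff u|^2 + |\dff W|^2 u^2$ together with the quadratic growth $|\dff W|\le C(1+|x|)$, which after noting $\m_2(u)$ is controlled (it is, on sublevels of $\calH_\tau$, but more simply one uses $\calH$-boundedness; actually here the cleanest route is to keep $\|\Delta W\|_{L^\infty}$ rather than $\dff W$ — integrating $\int(W+\eps\phi(v))\Delta u$ by parts \emph{twice} moves the derivatives onto $W$ and $\phi(v)$, yielding $\int u\,\Delta W$ and $\eps\int u\,\Delta(\phi(v))$, and $\int u\,\Delta W \le \|\Delta W\|_{L^\infty}\|u\|_{L^1}=\|\Delta W\|_{L^\infty}$). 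All terms carrying a factor $\eps$ — namely $\eps\int\phi'(v)\dff v\cdot\dff u$, $\eps\int u\Delta(\phi(v))$, $2\eps\int(\Delta v-\kappa v)u\phi'(v)$, $\eps^2\int u^2\phi'(v)^2$ — are estimated by Young's inequality using the bounds $|\phi'|\le|\phi'(0)|$, $0\le\phi''\le\bphi$ from~\eqref{eq:assumpphi}, plus Sobolev embedding $W^{1,2}(\Rdrei)\hookrightarrow L^6$ and interpolation $\|u\|_{L^3}\le\|u\|_{L^2}^{?}\cdots$ (as in~\eqref{eq:holsob}) to turn products of three unknowns into a small multiple of $\|\dff u\|_{L^2}^2$ or $\|\Delta v-\kappa v\|_{L^2}^2$ plus multiples of $\|u\|_{L^2}^2$ and $\|v\|_{W^{1,2}}^2$. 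After these absorptions one arrives at $-\tfrac{\dn}{\dn s}\calH|_{s=0}\ge \tfrac12\|\dff u\|_{L^2}^2 + \tfrac12\|\Delta v-\kappa v\|_{L^2}^2 - C(\|u\|_{L^2}^2+\|v\|_{W^{1,2}}^2+\|\Delta W\|_{L^\infty})$, which combined with the flow-interchange inequality gives~\eqref{eq:reg_min} with a suitable $K$.

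The main obstacle is rigor: the identity above for $\mathrm{D}^\auxil\calH$ is only formal, because along the heat flow the density is instantaneously smooth but one must justify the integration by parts and the limit $\limsup_{h\downarrow 0}$, and because $\calH$ along $\flow{s}{\auxil}$ need not be differentiable. The standard remedy — used in \cite{matthes2009} and in the cited works \cite{blanchet2012,zinsl2012}, so I would only sketch it — is to use the \emph{dissipation-of-dissipation} structure: for $s>0$ the flow $\heat_s u$ is a smooth, positive, rapidly decaying solution of the heat equation (likewise $\flow{s}{\mathcal F}v$ solves a linear parabolic equation with smoothing), so all integrations by parts are legitimate for $s>0$; one derives the differential inequality $\tfrac{\dn}{\dn s}\calH(\flow{s}{\auxil}(u,v))\le -c\,[\text{good terms}](s) + C[\text{bad terms}](s)$ for $s>0$, integrates over $s\in(0,h)$, divides by $h$, and lets $h\downarrow0$, using lower semicontinuity of $\calH$ and of the Sobolev norms together with $\flow{s}{\auxil}(u,v)\to(u,v)$ in $\dist$ as $s\downarrow0$ to recover the terms evaluated at $(u,v)$; the sign of $\mathrm D^\auxil\calH$ (a $\limsup$) works in the favourable direction for this argument. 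A secondary technical point is checking that $\mathrm D^\auxil\calH(u,v)$ is finite (so the inequality is not vacuous): this follows a posteriori, since the right-hand side of the flow-interchange inequality is finite because $(u,v),(\tilde u,\tilde v)\in L^2\times W^{1,2}$ and $\mathcal E,\mathcal F$ are finite there (for $\mathcal E$ finiteness uses the bound on $\m_2(u)$ and $\|u\|_{L^2}$, via $u\log u \le u^2$ for $u\ge1$ and $u\log u \ge -C(1+|x|^2)u$).
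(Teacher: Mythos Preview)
Your approach is essentially the paper's: flow interchange with the separable auxiliary functional $\mathcal{E}(u)+\mathcal{F}(v)$, compute the dissipation of $\calH$ along the product flow $(\flow{s}{\mathcal E}u,\flow{s}{\mathcal F}v)$, absorb the $\eps$-coupling terms by Young, and pass $s\downarrow0$ via lower semicontinuity. Two small points where you drift from the paper. First, the $v$-part of the dissipation is \emph{not} the full square $\int(-\Delta v+\kappa v+\eps u\phi'(v))^2$: since $\partial_s v=\Delta v-\kappa v$ (not $-\delta\calH/\delta v$), you only get $\int(\Delta v-\kappa v)^2+\eps\int u\phi'(v)(\kappa v-\Delta v)$, with no $\eps^2$ term and a single cross term. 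Second, you are over-engineering the $\eps$-estimates: the paper integrates $\int\phi(v)\Delta u$ by parts \emph{once} to get $-\int\phi'(v)\dff v\cdot\dff u$ (so the term $\eps\int u\Delta(\phi(v))$ never appears --- your list double-counts), and then simple Young plus $|\phi'|\le|\phi'(0)|$ gives
\[
-\eps\int\phi'(v)\dff v\cdot\dff u\le\tfrac12\|\dff u\|_{L^2}^2+\tfrac12\eps^2\phi'(0)^2\|\dff v\|_{L^2}^2,\qquad
\eps\int u\phi'(v)(\Delta v-\kappa v)\le\tfrac12\|\Delta v-\kappa v\|_{L^2}^2+\tfrac12\eps^2\phi'(0)^2\|u\|_{L^2}^2,
\]
with no Sobolev or interpolation needed. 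The paper then closes by noting that $\|\flow{s}{\mathcal E}u\|_{L^2}$ and $\|\flow{s}{\mathcal F}v\|_{W^{1,2}}$ are nonincreasing in $s$, so these lower-order terms at time $s$ are bounded by their values at $s=0$, which is the step that makes the passage $s\downarrow0$ clean.
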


\begin{proof}
The method of proof used here is based on the flow interchange lemma (Lemma \ref{lem:flowinterchange}). The idea is to calculate the dissipation of $\calH$ along the gradient flow of an auxiliary functional, namely the heat flow and the heat flow with decay, respectively.

Therefore, we recall that the functional $\mathcal{E}(u):=\int_\Rdrei u\log(u)\dd x$ is $0$-geodesically convex on $\mathscr{P}_2(\Rdrei)$ and its gradient flow $\flow{(\cdot)}{\mathcal{E}}$ is the heat flow satisfying
\begin{align*}
\partial_s \flow{s}{\mathcal{E}}(u)&=\Delta \flow{s}{\mathcal{E}}(u).
\end{align*}

Moreover, with the evolution variational inequality \eqref{eq:evi}, we deduce as in \cite{blanchet2012, zinsl2012} by integration over time using that $\mathcal{E}$ is a Lyapunov functional along $\flow{(\cdot)}{\mathcal{E}}$:
\begin{align}
\frac{1}{2}\left(\W_2^2(\flow{s}{\mathcal{E}}(u),\tilde u)-\W_2^2(u,\tilde u)\right)&\le\int_0^s(\mathcal{E}(\tilde u)-\mathcal{E}(\flow{\sigma}{\mathcal{E}}(u)))\dd \sigma\le s[\mathcal{E}(\tilde u)-\mathcal{E}(\flow{s}{\mathcal{E}}(u))].\label{eq:heat_evi}
\end{align}

Analogous to that, $\mathcal{F}(v):=\int_\Rdrei \left(\frac{1}{2}|\dff v|^2+\frac{\kappa}{2}v^2\right)\dd x$ is $\kappa$-geodesically convex on $L^2(\Rdrei)$ and its gradient flow $\flow{(\cdot)}{\mathcal{F}}$ is given by
\begin{align*}
\partial_s \flow{s}{\mathcal{F}}(v)&=\Delta \flow{s}{\mathcal{F}}(v)-\kappa \flow{s}{\mathcal{F}}(v).
\end{align*}

The application of the evolution variational inequality \eqref{eq:evi} then shows
\begin{align}
\frac{1}{2}\left(\|\flow{s}{\mathcal{F}}(v)-\tilde v\|_{L^2(\Rdrei)}^2-\|v-\tilde v\|_{L^2(\Rdrei)}^2\right)&\le\int_0^s(\mathcal{F}(\tilde v)-\mathcal{F}(\flow{\sigma}{\mathcal{F}}(v)))\dd \sigma\le s[\mathcal{F}(\tilde v)-\mathcal{E}(\flow{s}{\mathcal{F}}(v))].\label{eq:heat_dec_evi}
\end{align}

Well-known results of parabolic theory ensure that $(\flow{s}{\mathcal{E}}(u),\flow{s}{\mathcal{F}}(v))\in \theX\cap(L^2(\Rdrei)\times W^{1,2}(\Rdrei))$ if $(u,v)\in \theX\cap(L^2(\Rdrei)\times W^{1,2}(\Rdrei))$. For the sake of clarity, we introduce the notation $(\mathcal{U}_s,\mathcal{V}_s):=(\flow{s}{\mathcal{E}}(u),\flow{s}{\mathcal{F}}(v))$ and calculate for $s>0$:
\begin{align*}
&\frac{\dd}{\dd s}\mathcal{H}(\mathcal{U}_s,\mathcal{V}_s)\nonumber\\
&=\int_\Rdrei\bigg(\left[\mathcal{U}_s+W+\eps\phi(\mathcal{V}_s)\right]\Delta \mathcal{U}_s+\left[-\Delta\mathcal{V}_s+\kappa\mathcal{V}_s+\eps\mathcal{U}_s\phi'(\mathcal{V}_s)\right][\Delta\mathcal{V}_s-\kappa \mathcal{V}_s]\bigg)\dd x\nonumber\\
&=\int_\Rdrei\bigg(-|\dff \mathcal{U}_s|^2-\mathcal{U}_s\Delta W-(\Delta \mathcal{V}_s-\kappa \mathcal{V}_s)^2-\eps\phi'(\mathcal{V}_s)\dff \mathcal{V}_s\cdot \dff \mathcal{U}_s+\eps \mathcal{U}_s\phi'(\mathcal{V}_s)[\Delta\mathcal{V}_s-\kappa \mathcal{V}_s]\bigg)\dd x,
\end{align*}
where the last line follows by integration by parts. An application of Young's inequality yields
\begin{align*}
&\int_\Rdrei\bigg(-|\dff \mathcal{U}_s|^2-\mathcal{U}_s\Delta W-(\Delta \mathcal{V}_s-\kappa \mathcal{V}_s)^2-\eps\phi'(\mathcal{V}_s)\dff \mathcal{V}_s\cdot \dff \mathcal{U}_s+\eps \mathcal{U}_s\phi'(\mathcal{V}_s)[\Delta\mathcal{V}_s-\kappa \mathcal{V}_s]\bigg)\dd x\nonumber\\
&\le \int_\Rdrei\bigg(-\frac{1}{2}|\dff \mathcal{U}_s|^2-\mathcal{U}_s\Delta W-\frac{1}{2}(\Delta \mathcal{V}_s-\kappa \mathcal{V}_s)^2+\frac{1}{2}\eps^2\phi'(0)^2 (|\dff \mathcal{V}_s|^2+\mathcal{U}_s^2)\bigg)\dd x.
\end{align*}
Exploiting the monotonicity of the $L^2$ norm along $\flow{(\cdot)}{\mathcal{E}}$ and of the $W^{1,2}$ norm along $\flow{(\cdot)}{\mathcal{F}}$, one gets
\begin{align*}
&\int_\Rdrei\bigg(-\frac{1}{2}|\dff \mathcal{U}_s|^2-\mathcal{U}_s\Delta W-\frac{1}{2}(\Delta \mathcal{V}_s-\kappa \mathcal{V}_s)^2+\frac{1}{2}\eps^2\phi'(0)^2 (|\dff \mathcal{V}_s|^2+\mathcal{U}_s^2)\bigg)\dd x\nonumber\\
&\le-\frac{1}{2}\|\dff \mathcal{U}_s\|_{L^2(\Rdrei)}^2-\frac{1}{2}\|\Delta \mathcal{V}_s-\kappa \mathcal{V}_s\|_{L^2(\Rdrei)}^2\\&+\|\Delta W\|_{L^\infty(\Rdrei)}+\frac{1}{2}\eps^2\phi'(0)^2 (\|\dff v\|_{L^2(\Rdrei)}^2+\|u\|_{L^2(\Rdrei)}^2).
\end{align*}

All in all, we have estimated the dissipation of $\calH$ along $\flow{(\cdot)}{\mathcal{E}}$ and $\flow{(\cdot)}{\mathcal{F}}$:
\begin{align}
&\frac{\dd}{\dd s}\mathcal{H}(\mathcal{U}_s,\mathcal{V}_s)\le-\frac{1}{2}\left(\|\dff \mathcal{U}_s\|_2^2+\|\Delta \mathcal{V}_s-\kappa \mathcal{V}_s\|_2^2\right)+C\|u\|_2^{2}+C\|v\|_{W^{1,2}(\Rdrei)}^{2}+\|\Delta W\|_\infty.\label{eq:estR}
\end{align}

In the final step of the proof of Proposition \ref{prop:reg_min}, we use the minimizing property of $(u,v)$. Clearly,
\begin{align*}
0&\le\calH_\tau(\mathcal{U}_s,\mathcal{V}_s\,|\,\tilde u,\tilde v)-\calH_\tau(u,v\,|\,\tilde u,\tilde v).
\end{align*}
We insert \eqref{eq:heat_evi}, \eqref{eq:heat_dec_evi} and \eqref{eq:estR} and obtain
\begin{align*}
&\frac{1}{s}\int_0^s\left(\|\dff \mathcal{U}_\sigma\|_{L^2(\Rdrei)}^2+\|\Delta \mathcal{V}_\sigma-\kappa \mathcal{V}_\sigma\|_{L^2(\Rdrei)}^2\right)\dd \sigma\nonumber\\
&\le K\left(\|u\|_{L^2(\Rdrei)}^{2}+\|v\|_{W^{1,2}(\Rdrei)}^{2}+\|\Delta W\|_{L^\infty(\Rdrei)}+\frac{1}{\tau}\left(\mathcal{E}(\tilde u)-\mathcal{E}(\mathcal{U}_s)+\mathcal{F}(\tilde v)-\mathcal{F}(\mathcal{V}_s)\right)\right),
\end{align*}
for some constant $K>0$.
Similar to \cite{blanchet2012,zinsl2012,blanchet2013r}, passing to the $\liminf$ as $s\to 0$ yields \eqref{eq:reg_min} by lower semicontinuity of norms and continuity of the entropies $\mathcal{E}$ and $\mathcal{F}$ along their respective gradient flows.
\end{proof}

\subsection{Compactness estimates and passage to continuous time}\label{subsec:comp}
The following compactness estimates in addition to the results of Proposition \ref{prop:class} are needed to pass to the limit $\tau\to 0$ in the nonlinear terms of the discrete weak formulation \eqref{eq:discrete_weak} afterwards. The method of proof is essentially the same as in \cite[Sect. 7]{zinsl2012}. For the sake of brevity, the details are omitted here.

\begin{prop}[Additional \textit{a priori} estimates]\label{prop:apriori}
Let $(u_\tau,v_\tau)$ be the discrete solution obtained by the minimizing movement scheme \eqref{eq:minmov}. Then the following holds for $T>0$:
\begin{align}
\m_2(u_\tau^n)&\le C_1<\infty\qquad\forall n\le\left\lfloor\frac{T}{\tau}\right\rfloor,\label{eq:apriori6}\\
\|u_\tau^n\|_{L^2(\Rdrei)}&\le C_3<\infty\qquad\forall n\ge 0,\label{eq:apriori7}\\
\|v_\tau^n\|_{W^{1,2}(\Rdrei)}&\le C_5<\infty \qquad\forall n\ge 0,\label{eq:apriori8}\\
\int_0^T\|u_\tau(t)\|_{W^{1,2}(\Rdrei)}^2\dd t&\le C_6<\infty,\label{eq:apriori9}\\
\int_0^T\|v_\tau(t)\|_{W^{2,2}(\Rdrei)}^2\dd t&\le C_7<\infty,\label{eq:apriori10}
\end{align}
with constants $C_j>0$ only depending on $T$ and the initial condition $(u_0,v_0)$.
\end{prop}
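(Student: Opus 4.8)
The plan is to derive all five bounds from the classical estimates of Proposition \ref{prop:class}, the coercivity \eqref{eq:Hest} of $\calH$, and the regularity estimate \eqref{eq:reg_min} for minimizers of $\calH_\tau$, combined with a summation-by-parts (telescoping) argument; the overall scheme is the one already used in \cite[Sect.~7]{zinsl2012}, so I only indicate the main steps.

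First I would treat the moment and norm bounds \eqref{eq:apriori6}--\eqref{eq:apriori8}. For the second moments, using $\m_2(u)^{1/2}=\W_2(u,\delta_0)$, the triangle inequality in $(\prb(\Rdrei),\W_2)$, and the Cauchy--Schwarz inequality together with \eqref{eq:apriori2}, one gets
\[
  \m_2(u_\tau^n)^{1/2}\le\m_2(u_0)^{1/2}+\sum_{m=1}^n\W_2(u_\tau^m,u_\tau^{m-1})\le\m_2(u_0)^{1/2}+\sqrt{2\,n\tau\,(\calH(u_0,v_0)-\inf\calH)},
\]
which for $n\le\lfloor T/\tau\rfloor$ is bounded by a constant depending only on $T$ and $(u_0,v_0)$; this is \eqref{eq:apriori6}. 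Then \eqref{eq:apriori7} and \eqref{eq:apriori8} follow at once by inserting the monotonicity \eqref{eq:apriori1}, i.e.\ $\calH(u_\tau^n,v_\tau^n)\le\calH(u_0,v_0)$, into \eqref{eq:Hest}, which yields $\|u_\tau^n\|_{L^2(\Rdrei)}^2+\|v_\tau^n\|_{W^{1,2}(\Rdrei)}^2\le C_0^{-1}\calH(u_0,v_0)+C_1$.

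For the space-time integral bounds \eqref{eq:apriori9}--\eqref{eq:apriori10} I would apply Proposition \ref{prop:reg_min} with $(u,v)=(u_\tau^n,v_\tau^n)$ and $(\tilde u,\tilde v)=(u_\tau^{n-1},v_\tau^{n-1})$ (a legitimate choice since Lemma \ref{prop:minmov} guarantees $(u_\tau^n,v_\tau^n)\in\theX\cap(L^2(\Rdrei)\times W^{1,2}(\Rdrei))$), multiply \eqref{eq:reg_min} by $\tau$, and sum over $n=1,\dots,N$ with $N:=\lceil T/\tau\rceil$. On the right-hand side, the terms $\|u_\tau^n\|_{L^2}^2$, $\|v_\tau^n\|_{W^{1,2}}^2$ and $\|\Delta W\|_{L^\infty}$ are already bounded uniformly in $n$ and $\tau$ by the previous step (resp.\ by hypothesis), so their $\tau$-weighted sum is at most $N\tau\cdot\mathrm{const}\le(T+\tau)\cdot\mathrm{const}$; the remaining contribution $\sum_{n=1}^N\big(\mathcal{E}(u_\tau^{n-1})-\mathcal{E}(u_\tau^n)+\mathcal{F}(v_\tau^{n-1})-\mathcal{F}(v_\tau^n)\big)$ telescopes to $\mathcal{E}(u_0)-\mathcal{E}(u_\tau^N)+\mathcal{F}(v_0)-\mathcal{F}(v_\tau^N)$. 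Here $\mathcal{E}(u_0)\le\|u_0\|_{L^2(\Rdrei)}^2<\infty$ (since $\log t\le t-1$) and $\mathcal{F}(v_0)<\infty$ because $v_0\in W^{1,2}(\Rdrei)$, while $\mathcal{F}(v_\tau^N)\ge0$ and $-\mathcal{E}(u_\tau^N)\le C(1+\m_2(u_\tau^N))$ is bounded through \eqref{eq:apriori6} by the standard lower bound for the Boltzmann entropy of a probability density with finite second moment. Consequently
\[
  \int_0^T\Big(\|\dff u_\tau(t)\|_{L^2(\Rdrei)}^2+\|\Delta v_\tau(t)-\kappa v_\tau(t)\|_{L^2(\Rdrei)}^2\Big)\dd t\le\sum_{n=1}^N\tau\Big(\|\dff u_\tau^n\|_{L^2}^2+\|\Delta v_\tau^n-\kappa v_\tau^n\|_{L^2}^2\Big)\le C(T,u_0,v_0),
\]
and adding $\int_0^T\|u_\tau(t)\|_{L^2(\Rdrei)}^2\dd t\le(T+\tau)C_3^2$ gives \eqref{eq:apriori9}. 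For \eqref{eq:apriori10} I would use $\|\Delta v_\tau^n\|_{L^2}\le\|\Delta v_\tau^n-\kappa v_\tau^n\|_{L^2}+\kappa\|v_\tau^n\|_{L^2}$ and the Calderón--Zygmund estimate $\|\dff^2 w\|_{L^2(\Rdrei)}\le C\|\Delta w\|_{L^2(\Rdrei)}$, which holds on all of $\Rdrei$ (and in particular shows $v_\tau^n\in W^{2,2}(\Rdrei)$), to obtain $\|v_\tau^n\|_{W^{2,2}(\Rdrei)}^2\le C\big(\|\Delta v_\tau^n-\kappa v_\tau^n\|_{L^2(\Rdrei)}^2+\|v_\tau^n\|_{W^{1,2}(\Rdrei)}^2\big)$, and integrate in time using the estimates just derived.

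I do not expect any serious obstacle: the argument is essentially bookkeeping once Propositions \ref{prop:class} and \ref{prop:reg_min} are in hand. The one point that needs a little care is the telescoping step, where one must know that the boundary entropy values $\mathcal{E}(u_0)$, $\mathcal{F}(v_0)$ are finite (guaranteed by $u_0\in L^2(\Rdrei)\cap\prb(\Rdrei)$ and $v_0\in W^{1,2}(\Rdrei)$) and that the end terms $-\mathcal{E}(u_\tau^N)$, $-\mathcal{F}(v_\tau^N)$ remain bounded above uniformly in $\tau$ — the former via the already-established moment bound \eqref{eq:apriori6}, the latter simply because $\mathcal{F}\ge0$; all remaining constants depend only on $T$ and $(u_0,v_0)$.
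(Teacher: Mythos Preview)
Your proposal is correct and follows essentially the same route as the paper's (commented-out) proof, which in turn defers to \cite[Sect.~7]{zinsl2012}: triangle inequality plus Cauchy--Schwarz for the moment bound, coercivity/monotonicity of $\calH$ for the uniform norm bounds, and Proposition~\ref{prop:reg_min} multiplied by $\tau$ and telescoped for the space-time integrals, with the Boltzmann entropy controlled above by $\|u\|_{L^2}^2$ and below via the second moment. The only cosmetic difference is that you invoke \eqref{eq:Hest} directly for \eqref{eq:apriori7}--\eqref{eq:apriori8}, whereas the paper extracts $\|u\|_{L^2}$ and $\|\dff v\|_{L^2}$ straight from $\calH(u_\tau^n,v_\tau^n)\le\calH(u_0,v_0)$ and handles $\|v\|_{L^2}$ separately by a triangle-inequality chain; either way works.
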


The estimates of Propositions \ref{prop:class} and \ref{prop:apriori} enable us to prove the existence of the continuous-time limit of the discrete solution:

\begin{prop}[Continuous time limit]\label{prop:conv_disc}
Let $(\tau_k)_{k\ge 0}$ be a vanishing sequence of step sizes, i.e. $\tau_k\to 0$ as $k\to\infty$, and let $(u_{\tau_k},v_{\tau_k})_{k\ge 0}$ be the corresponding sequence of discrete solutions obtained by the minimizing movement scheme.\\ Then there exist a subsequence (non-relabelled) and limit curves $u\in C^{1/2}([0,T],\mathscr{P}_2(\Rdrei))$ and \\ $v \in C^{1/2}([0,T],L^2_+(\Rdrei))$ such that the following holds for $k\to\infty$:
\begin{enumerate}[(a)]
\item For fixed $t\in[0,T]$, $u_{\tau_k}\to u$ narrowly in $\mathscr{P}(\Rdrei)$,
\item $v_{\tau_k}\to v$ uniformly with respect to $t\in[0,T]$ in $L^2(\Rdrei)$,
\item $u_{\tau_k}\rightharpoonup u$ weakly in $L^2([0,T],W^{1,2}(\Rdrei))$,
\item $v_{\tau_k}\rightharpoonup v$ weakly in $L^2([0,T],W^{2,2}(\Rdrei))$,
\item $u_{\tau_k}\to u$ strongly in $L^2([0,T],L^2(\Omega))$ for all bounded domains $\Omega\subset\Rdrei$,
\item $v_{\tau_k}\to v$ strongly in $L^2([0,T],W^{1,2}(\Omega))$ for all bounded domains $\Omega\subset\Rdrei$.
\end{enumerate}
\end{prop}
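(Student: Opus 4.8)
The proof of Proposition \ref{prop:conv_disc} rests on the a priori bounds collected in Propositions \ref{prop:class} and \ref{prop:apriori}, combined with standard compactness arguments of Aubin--Lions type adapted to the metric setting. The plan is as follows. First I would establish the $1/2$-H\"older continuity in time of the limit curves: estimates \eqref{eq:apriori3} and \eqref{eq:apriori5} say that the discrete solutions $(u_{\tau_k},v_{\tau_k})$ are \emph{approximately} $1/2$-H\"older in the metrics $\W_2$ and $\|\cdot\|_{L^2(\Rdrei)}$ respectively, with a modulus that degenerates only at scale $\tau_k$. A refined version of the Arzel\`a--Ascoli theorem for curves with values in a complete metric space (see \cite[Prop. 3.3.1]{savare2008}) then yields, along a subsequence, a limit curve $u\in C^{1/2}([0,T],\prb(\Rdrei))$ with $u_{\tau_k}(t)\to u(t)$ narrowly for each fixed $t$ (using that bounded second moments, from \eqref{eq:apriori6}, make $\W_2$-convergence equivalent to narrow convergence on the relevant sublevels), giving (a). For $v$, the bound \eqref{eq:apriori5} together with \eqref{eq:apriori8} gives equicontinuity and pointwise relative compactness in the weak $L^2$ topology; but to upgrade to the \emph{uniform} strong convergence claimed in (b) one also invokes \eqref{eq:apriori10}, which controls $\int_0^T\|v_{\tau_k}\|_{W^{2,2}}^2$, so that an Aubin--Lions--Simon argument (with $W^{2,2}\hookrightarrow L^2$ locally compact, and the discrete time-derivative controlled via \eqref{eq:apriori5}) produces strong convergence in $C([0,T],L^2(\Rdrei))$; here one must also handle the lack of global compactness of the embedding $W^{1,2}(\Rdrei)\hookrightarrow L^2(\Rdrei)$ by a tightness/tail argument, exactly as in the lower-semicontinuity proof of Proposition \ref{prop:propH}(b), using the uniform second-moment-type control to make the far-field contribution small.

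Next, the weak convergences (c) and (d) are immediate: \eqref{eq:apriori9} and \eqref{eq:apriori10} say that $(u_{\tau_k})$ and $(v_{\tau_k})$ are bounded in the reflexive spaces $L^2([0,T],W^{1,2}(\Rdrei))$ and $L^2([0,T],W^{2,2}(\Rdrei))$ respectively, so by Alaoglu's theorem a further subsequence converges weakly; the limits must coincide with the $u$ and $v$ already identified, since weak $L^2$-in-space-time convergence is compatible with the pointwise-in-time narrow (resp.\ strong $L^2$) convergence from (a)--(b). Then (e) and (f) follow by interpolating compactness with the weak bounds: on a bounded domain $\Omega$, $W^{1,2}(\Omega)\hookrightarrow L^2(\Omega)$ and $W^{2,2}(\Omega)\hookrightarrow W^{1,2}(\Omega)$ are compact, and combining the spatial bounds \eqref{eq:apriori9}, \eqref{eq:apriori10} with the temporal H\"older control \eqref{eq:apriori3}, \eqref{eq:apriori5} (which plays the role of an equicontinuity-in-time estimate) one applies the Aubin--Lions--Simon lemma on $\Omega$ to extract strong convergence in $L^2([0,T],L^2(\Omega))$ and $L^2([0,T],W^{1,2}(\Omega))$; one can also deduce (e) and (f) directly from (a), (b) together with the spatial bounds, since narrow (resp.\ $L^2$) convergence plus boundedness in the finer Sobolev norm upgrades to local strong convergence by the Rellich--Kondrachov theorem and a diagonal argument over an exhausting sequence of balls.

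Finally one checks nonnegativity and the regularity $v\in C^{1/2}([0,T],L^2_+(\Rdrei))$: nonnegativity of $u_{\tau_k}$ is built into the scheme (measures), and nonnegativity of $v_{\tau_k}$ is preserved by the minimizing movement step (as will be discussed elsewhere in the paper via the structure of \eqref{eq:pde_v}), so it passes to the strong $L^2$-limit; the $1/2$-H\"older regularity of $v$ is inherited from \eqref{eq:apriori5} by passing to the limit $k\to\infty$ at fixed $s,t$.

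The main obstacle I anticipate is the \emph{uniform-in-time strong} convergence of $v_{\tau_k}$ in $L^2(\Rdrei)$, claim (b): this is stronger than $L^2([0,T],L^2(\Rdrei))$ strong convergence and cannot come from a bare Aubin--Lions argument, which only gives convergence in $L^2$ in time. One genuinely needs to combine the quantitative equicontinuity in time from \eqref{eq:apriori5} (valid uniformly, up to the $\tau_k$-scale error) with the spatial compactness coming from \eqref{eq:apriori10}, and then separately control the spatial tails uniformly in $t$ and $k$ — the latter being the delicate point, since the second-moment bound \eqref{eq:apriori6} is only available for $u$, not for $v$, so the tail control for $v$ must be extracted from the equation \eqref{eq:pde_v} itself (e.g.\ testing against a cutoff and using that $\kappa>0$ provides exponential-in-space decay), or else the hypothesis $v_0\in L^{6/5}(\Rdrei)$ would be needed, which however is \emph{not} assumed in Theorem \ref{thm:existence}. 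Resolving this cleanly — presumably by a careful cutoff estimate on the discrete level exploiting strict positivity of $\kappa$ — is where the real work of the proof lies; everything else is a routine, if lengthy, assembly of standard compactness tools, which is why the paper defers the details to \cite[Sect. 7]{zinsl2012}.
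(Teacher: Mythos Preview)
Your proposal follows essentially the same route as the paper: refined Arzel\`a--Ascoli for (a)--(b), Banach--Alaoglu for (c)--(d), and an Aubin--Lions-type argument for (e)--(f). Two points deserve sharpening.

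For (e) you cannot invoke the classical Aubin--Lions--Simon lemma, because the only time regularity available for $u_{\tau_k}$ is the H\"older estimate \eqref{eq:apriori3} in the Wasserstein distance $\W_2$, which is not a Banach-space norm and does not control any negative-Sobolev time derivative. The paper (through its deferral to \cite{zinsl2012}) relies on the extension of Aubin--Lions due to Rossi and Savar\'e \cite[Thm.~2]{rossi2003}, whose time-compactness hypothesis is formulated via a lower-semicontinuous pseudo-distance and therefore accommodates $\W_2$ directly. Your opening phrase ``Aubin--Lions type adapted to the metric setting'' points in the right direction, but the later appeal to plain Aubin--Lions--Simon on $\Omega$, or to Rellich--Kondrachov plus a diagonal argument over balls, does not close (e) without this refinement. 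For (f) the classical version would suffice, since \eqref{eq:apriori5} is already an $L^2$-estimate.

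On (b) you over-engineer. The paper handles it in strict analogy with (a): the uniform $W^{1,2}$-bound \eqref{eq:apriori8} together with Banach--Alaoglu gives pointwise relative compactness in the weak topology, and combined with the $L^2$-H\"older estimate \eqref{eq:apriori5} the refined Arzel\`a--Ascoli theorem \cite[Prop.~3.3.1]{savare2008} yields a limit curve $v\in C^{1/2}([0,T],L^2(\Rdrei))$ with pointwise convergence along a single subsequence. No tail estimate exploiting $\kappa>0$, no $L^{6/5}$ hypothesis on $v_0$, and no upgrade to strong uniform-in-$t$ convergence on all of $\Rdrei$ is invoked; the strong local convergences actually needed to pass to the limit in the nonlinear terms of \eqref{eq:discrete_weak} are supplied by (e)--(f), not by (b). The ``real work'' you anticipate in (b) is therefore a non-issue.
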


Now, to complete the proof of Theorem \ref{thm:existence}, one needs to verify that the obtained limit curve $(u,v)$ indeed satisfies the weak formulation \eqref{eq:pde_weak1}\&\eqref{eq:pde_weak2}. This will be omitted here for the sake of brevity.

\section{The stationary solution}\label{sec:stat}

In this section, we provide the characterization of a stationary state of system \eqref{eq:pde_u}\&\eqref{eq:pde_v} and prove some relevant properties.

\subsection{Existence and uniqueness}\label{subsec:ex+un}

At first, we show existence and uniqueness of the stationary solution to system \eqref{eq:pde_u}\&\eqref{eq:pde_v}:

\begin{prop}\label{prop:ex_infty}
For each sufficiently small $\eps>0$, there exists a unique minimizer $(u_\infty,v_\infty)\in \theX\cap(W^{1,2}(\Rdrei)\times W^{2,2}(\Rdrei))$ of $\calH$, for which the following holds:\\
$(u_\infty,v_\infty)$ is a stationary solution to \eqref{eq:pde_u}\&\eqref{eq:pde_v} and to the following Euler-Lagrange system
\begin{align}
&\Delta v_\infty-\kappa v_\infty=\eps u_\infty\phi'(v_\infty),\label{eq:v_infty}\\
&u_\infty=[U_\eps-W-\eps\phi(v_\infty)]_+,\label{eq:u_infty}
\end{align}
where $U_\eps\in\R$ is chosen such that $\|u_\infty\|_{L^1(\Rdrei)}=1$, and $[\cdot]_+$ denotes the positive part.\\
Moreover, there is a $V>0$ such that $v_\infty\in C^0(\Rdrei)$ satisfies $\|v_\infty\|_{L^\infty(\Rdrei)}\le V$, independent of $\eps>0$.
\end{prop}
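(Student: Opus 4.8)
My first step is the direct method. By Proposition~\ref{prop:propH}(a), $\calH$ is bounded below and its sublevels are bounded in $L^2(\Rdrei)\times W^{1,2}(\Rdrei)$ with uniformly bounded second moment in the first slot; by Proposition~\ref{prop:propH}(b), $\calH$ is lower semicontinuous along sequences converging narrowly in the first and weakly in $W^{1,2}(\Rdrei)$ in the second component. Hence a minimizing sequence has a subsequence converging in this sense to a minimizer $(u_\infty,v_\infty)$, and finiteness of $\calH$ forces $(u_\infty,v_\infty)\in\theX\cap(L^2(\Rdrei)\times W^{1,2}(\Rdrei))$. For uniqueness I would invoke Proposition~\ref{prop:propH}(c): for $\eps$ small, $\calH$ is $\lambda'$-convex with $\lambda'>0$ along the straight-line interpolations realizing $\dist'$, and such interpolations of two admissible pairs stay inside $\theX$ (convex combinations of probability densities, resp.\ of nonnegative $L^2$-functions), so strict convexity forbids two distinct minimizers.

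\textbf{Euler--Lagrange system, regularity and stationarity.}
Since replacing $v$ by its positive part does not increase $\calH$ ($\phi$ is decreasing and $u_\infty\ge0$, while the Dirichlet and $L^2$-terms can only decrease), $v_\infty$ in fact minimizes $v\mapsto\calH(u_\infty,v)$ over all of $W^{1,2}(\Rdrei)$; perturbing $v_\infty\rightsquigarrow v_\infty+s\gamma$ with $\gamma\in C^\infty_c(\Rdrei)$ and setting the first variation to zero gives $\int_\Rdrei(\dff v_\infty\cdot\dff\gamma+\kappa v_\infty\gamma+\eps u_\infty\phi'(v_\infty)\gamma)\dd x=0$, which is \eqref{eq:v_infty} in the weak sense. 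The right-hand side of $\Delta v_\infty=\kappa v_\infty+\eps u_\infty\phi'(v_\infty)$ lies in $L^2(\Rdrei)$ (as $v_\infty\in L^2$, $u_\infty\in L^2$ and $\phi'$ is bounded), so elliptic regularity gives $v_\infty\in W^{2,2}(\Rdrei)$ and then $W^{2,2}(\Rdrei)\hookrightarrow C^0(\Rdrei)\cap L^\infty(\Rdrei)$ yields continuity and boundedness of $v_\infty$. For $u_\infty$ I would use that, with $v=v_\infty$ fixed, it minimizes the strictly convex functional $u\mapsto\int_\Rdrei(\frac12u^2+u(W+\eps\phi(v_\infty)))\dd x$ over $\{u\ge0,\ \int u=1\}$; the Karush--Kuhn--Tucker conditions for the mass constraint give \eqref{eq:u_infty}, with $U_\eps$ uniquely determined because $g:=W+\eps\phi(v_\infty)$ is continuous with $g(x)\to\infty$ as $|x|\to\infty$ ($W$ quadratic, $\phi(v_\infty)$ bounded), so $U\mapsto\int_\Rdrei(U-g)_+\dd x$ is continuous, strictly increasing and onto $(0,\infty)$. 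Formula \eqref{eq:u_infty} also shows $u_\infty$ is bounded with compact support and $\dff u_\infty=-\eins_{\{u_\infty>0\}}(\dff W+\eps\phi'(v_\infty)\dff v_\infty)\in L^2(\Rdrei)$, whence $u_\infty\in W^{1,2}(\Rdrei)$; since $u_\infty+W+\eps\phi(v_\infty)$ is then constant on $\{u_\infty>0\}$ and $u_\infty=0$ elsewhere, $u_\infty\dff[u_\infty+W+\eps\phi(v_\infty)]\equiv0$, so \eqref{eq:pde_u} holds in the stationary sense, \eqref{eq:v_infty} being the stationary form of \eqref{eq:pde_v}.

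\textbf{The $\eps$-uniform $L^\infty$-bound, and the main obstacle.}
Comparing minimality against the fixed competitor $(\bar u,0)\in\theX$, with $\bar u$ any fixed probability density in $L^2(\Rdrei)$ of finite second moment, gives $\calH(u_\infty,v_\infty)\le\calH(\bar u,0)=\int_\Rdrei(\frac12\bar u^2+\bar u W+\eps\phi(0)\bar u)\dd x$, bounded uniformly for $\eps\in(0,1]$; since the constants in Proposition~\ref{prop:propH}(a) may be taken $\eps$-independent for $\eps\le1$, this bounds $\|u_\infty\|_{L^2(\Rdrei)}$ uniformly in $\eps$. I would then rewrite \eqref{eq:v_infty} as $-\Delta v_\infty+\kappa v_\infty=\eps u_\infty(-\phi'(v_\infty))$, note its right-hand side is nonnegative and $\le\eps|\phi'(0)|\,u_\infty$, and apply the comparison principle for $-\Delta+\kappa$ to get $0\le v_\infty\le\eps|\phi'(0)|\,(G_\kappa\ast u_\infty)$, where $G_\kappa$ is the Green function of $-\Delta+\kappa$ on $\Rdrei$ — a Yukawa-type kernel with integrable $|z|^{-1}$-singularity and exponential decay, hence $G_\kappa\in L^2(\Rdrei)$. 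Young's convolution inequality then gives $\|v_\infty\|_{L^\infty(\Rdrei)}\le\eps|\phi'(0)|\,\|G_\kappa\|_{L^2(\Rdrei)}\|u_\infty\|_{L^2(\Rdrei)}\le C\eps$, so the bound $V$ can be chosen independently of $\eps$ (in fact $\|v_\infty\|_{L^\infty}\to0$ as $\eps\to0$). I expect this last step to be the main obstacle: one must extract the $\eps$-uniform control of $\|u_\infty\|_{L^2}$ from the variational comparison and combine it with a genuinely three-dimensional potential estimate; a secondary, minor point is the circular-looking interdependence of the two Euler--Lagrange equations, which is broken by first upgrading $v_\infty$ to $W^{2,2}\hookrightarrow C^0$ before invoking the obstacle formula \eqref{eq:u_infty}.
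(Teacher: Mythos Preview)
Your proof is correct, but it proceeds along a somewhat different path than the paper's. The existence and uniqueness parts are essentially the same (direct method plus the $\lambda'$-convexity of Proposition~\ref{prop:propH}(c)). The real divergence is in how you obtain the regularity $(u_\infty,v_\infty)\in W^{1,2}(\Rdrei)\times W^{2,2}(\Rdrei)$ and the $\eps$-uniform $L^\infty$-bound on $v_\infty$.

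The paper does \emph{not} first derive the Euler--Lagrange equations and then appeal to elliptic regularity. Instead it observes that $(u_\infty,v_\infty)\in\operatorname{argmin}\calH_\tau(\,\cdot\,|\,u_\infty,v_\infty)$ for every $\tau>0$ and applies Proposition~\ref{prop:reg_min} (the flow-interchange regularity estimate) with $(\tilde u,\tilde v)=(u_\infty,v_\infty)$; the $\frac1\tau$-term then drops out and one reads off $\|u_\infty\|_{W^{1,2}}^2+\|v_\infty\|_{W^{2,2}}^2\le V_0(\calH(u_\infty,v_\infty)+\|\Delta W\|_{L^\infty}+V_1)$. Since $\calH(u_\infty,v_\infty)=\inf\calH$ is bounded uniformly in $\eps$ by comparison with any fixed competitor, the Sobolev embedding $W^{2,2}(\Rdrei)\hookrightarrow C^0(\Rdrei)$ immediately gives the $\eps$-independent bound $\|v_\infty\|_{L^\infty}\le V$. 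Only \emph{afterwards} does the paper derive the Euler--Lagrange system, via a variational inequality.

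Your route---first variation for $v$, elliptic regularity, then the obstacle formula for $u$, and finally the comparison/Green's function argument $0\le v_\infty\le \eps|\phi'(0)|\,\krnl_\kappa*u_\infty$ combined with $\krnl_\kappa\in L^2(\Rdrei)$---is more elementary in that it avoids the flow-interchange machinery, and it actually delivers the sharper conclusion $\|v_\infty\|_{L^\infty}=O(\eps)$ rather than merely $O(1)$. The paper's route has the advantage of reusing an estimate (Proposition~\ref{prop:reg_min}) that is needed anyway for the existence theory, so the regularity comes essentially for free. One minor point: your claim that ``$v_\infty$ minimizes $v\mapsto\calH(u_\infty,v)$ over all of $W^{1,2}(\Rdrei)$'' tacitly requires extending $\phi$ to all of $\R$ (the paper has $\phi\in C^2([0,\infty))$); this is harmless---any convex $C^2$-extension with bounded first and second derivative will do---but should be said. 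The paper sidesteps this by writing the variational inequality directly, though its assertion that ``there are no constraints on $v_\infty$'' glosses over the same issue.
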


\begin{proof}
We prove that $\calH$ possesses a unique minimizer $(u_\infty,v_\infty)$. Let therefore be given a minimizing sequence $(u_n,v_n)_{n\in\N}$ such that $\lim_{n\to\infty}\calH(u_n,v_n)=\inf\calH >-\infty$. As the sequence $(\calH(u_n,v_n))_{n\in\N}$ is bounded, we can, by the same argument as in the proof of Proposition \ref{prop:propH}, extract a (non-relabelled) subsequence, on which $(u_n)_{n\in\N}$ converges weakly in $L^2(\Rdrei)$ to some $u_\infty\in L^2_+(\Rdrei)$ and $(v_n)_{n\in\N}$ converges weakly in $W^{1,2}(\Rdrei)$ to some $v_\infty\in L^2_+(\Rdrei)\cap W^{1,2}(\Rdrei)$ as $n\to\infty$. By the same argument as in the proof of Proposition \ref{prop:propH}(b), $(u_\infty,v_\infty)$ is indeed a minimizer of $\calH$ and hence an element of $\theX\cap(L^2(\Rdrei)\times W^{1,2}(\Rdrei))$.

Since $(u_\infty,v_\infty)\in\mathrm{argmin}\,\calH_\tau(\cdot\,|\,u_\infty,v_\infty)$ for arbitrary $\tau>0$, Proposition \ref{prop:reg_min} immediately yields
\begin{align*}
\|u_\infty\|_{W^{1,2}(\Rdrei)}^2+\|v_\infty\|_{W^{2,2}(\Rdrei)}^2&\le V_0(\calH(u_\infty,v_\infty)+\|\Delta W\|_{L^\infty(\Rdrei)}+V_1),
\end{align*}
for some constants $V_0$, $V_1>0$. Because of the continuous embedding of $W^{2,2}(\Rdrei)$ into $C^0(\Rdrei)$, it follows that $\|v_\infty\|_{L^\infty(\Rdrei)}\le V$ for some $V>0$.

Uniqueness of the minimizer is, by \cite[Thm. 5.32]{villani2003}, a consequence of $\lambda'$-geodesic convexity of $\calH$ with respect to the distance $\dist'((u_1,v_1),(u_2,v_2)):=\sqrt{\|u_1-u_2\|_{L^2(\Rdrei)}^2+\|v_1-v_2\|_{L^2(\Rdrei)}^2}$ for some $\lambda'>0$ as proved in Proposition \ref{prop:propH}(c).

It remains to show that there is a set of Euler-Lagrange equations characterizing $(u_\infty,v_\infty)$. 

The following variational inequality holds:
\begin{align}
\int_\Rdrei(u_\infty+W+\eps\phi(v_\infty))\tilde u \dd x+\int_\Rdrei(-\Delta v_\infty+\kappa v_\infty+\eps u_\infty\phi'(v_\infty))\tilde v \dd x&\ge0,\label{eq:elineq}
\end{align}
for arbitrary maps $\tilde u$, $\tilde v$ such that $u_\infty+\tilde u\ge 0$ on $\Rdrei$ and $\int_\Rdrei \tilde u\dd x=0$.

First, we consider the second component and thus set $\tilde u=0$ in \eqref{eq:elineq}. As there are no constraints on $v_\infty$, it is allowed to replace $\tilde v$ by $-\tilde v$ in \eqref{eq:elineq}, yielding equality and hence, \eqref{eq:v_infty}.

Second, we consider the first component and set $\tilde v=0$ in \eqref{eq:elineq}. For arbitrary $\psi$ such that $\int_\Rdrei \psi\dd x\le 1$ and $\psi+u_\infty\ge 0$ on $\Rdrei$, we put
\begin{align*}
\tilde u_\psi&:=\frac12 \psi-\frac12 u_\infty\int_\Rdrei\psi\dd x,
\end{align*}
and observe that $u_\infty+u_\psi\ge 0$ on $\Rdrei$ and $\int_\Rdrei \tilde u_\psi\dd x=0$, since $u_\infty$ has mass equal to 1. By straightforward calculation, we obtain
\begin{align}
\label{eq:elineq2}
0&\le \int_\Rdrei (u_\infty+W+\eps\phi(v_\infty)-U_\eps)\psi\dd x,
\end{align}
for all $\psi$ as above and the constant
\begin{align*}
U_\eps:=\int_\Rdrei (u_\infty^2+Wu_\infty+\eps u_\infty \phi(v_\infty))\dd x\in\R.
\end{align*}

Fix $x\in\Rdrei$. If $u_\infty(x)>0$, choosing $\psi$ supported on a small neighbourhood of $x$ and replacing by $-\psi$ in \eqref{eq:elineq2} eventually yields
\begin{align*}
u_\infty(x)&= U_\eps-W(x)-\eps\phi(v_\infty(x)).
\end{align*}
If $u_\infty(x)=0$, we obtain
\begin{align*}
U_\eps-W(x)-\eps\phi(v_\infty(x))&\le 0,
\end{align*}
Hence, for all $x\in\Rdrei$,
\begin{align*}
u_\infty(x)&=[U_\eps-W(x)-\eps\phi(v_\infty(x))]_+.\qedhere
\end{align*}
\end{proof}

\subsection{Properties}\label{subsec:est}

As a preparation to prove some crucial regularity estimates on the stationary solution $(u_\infty,v_\infty)$, several properties of solutions to the elliptic partial differential equation $-\Delta h+\kappa h=f$ are needed.

Therefore, we introduce for $\kappa>0$ the \emph{Yukawa potential} (also called \emph{screened Coulomb} or \emph{Bessel potential}) $\krnl_\kappa$ by
\begin{align}
\krnl_\kappa(x)&:=\frac{1}{4\pi|x|}\exp(-\sqrt{\kappa}|x|)\quad\forall x\in\Rdrei\backslash\{0\}.\label{eq:yukawa}
\end{align}
Additionally, we define for $\sigma>0$ the kernel $\yucca_\sigma$ by
\begin{align*}
\yucca_\sigma&:=\frac{1}{\sigma}\krnl_{\frac{1}{\sigma}}.
\end{align*}
In subsequent parts of this work, we will need the \emph{iterates} $\yucca_\sigma^k$ for $k\in\N$ defined inductively by
\begin{align*}
 \yucca_\sigma^1 := \yucca_\sigma, \quad \yucca_\sigma^{k+1} := \yucca_\sigma *\yucca_\sigma^k.
\end{align*}
The relevant properties of $\krnl_\kappa$ and $\yucca_\sigma$ are summarized in Lemma \ref{lemma:reg_yuk} below. For the proof, we refer to Appendix \ref{app:yukawa}.

\begin{lemma}[Yukawa potential]\label{lemma:reg_yuk}
The following statements hold for all $\kappa>0$, $\sigma>0$ and $k\in\N$:
\begin{enumerate}[(a)]
\item $\krnl_\kappa$ and $\yucca_\sigma$ are the fundamental solutions to $-\Delta h+\kappa h=f$ and $-\sigma\Delta h+h=f$ on $\Rdrei$, respectively.
\item Let $p>1$. If $f\in L^p(\Rdrei)$, then $\krnl_\kappa*f\in W^{2,p}(\Rdrei)$ and
\begin{align}
\kappa\|\krnl_\kappa*f\|_{L^p(\Rdrei)}+\sqrt{\kappa}\|\dff (\krnl_\kappa*f)\|_{L^p(\Rdrei)}+\|\dff ^2(\krnl_\kappa*f)\|_{L^p(\Rdrei)}&\le C_p\|f\|_{L^p(\Rdrei)},\label{eq:reg_yuk}
\end{align}
for some $p$-dependent constant $C_p>0$. (Note that this fact is not obvious as $\dff ^2(\krnl_\kappa)\notin L^1(\Rdrei,\R^{3\times 3})$.)
\item For all $x\in\Rdrei\backslash\{0\}$,
\begin{align*}
 \yucca_\sigma(x) = \int_0^\infty \heat_{\sigma t}(x)e^{-t}\dd t,
\end{align*}
where $\heat_{t}$ is the \emph{heat kernel} on $\Rdrei$ at time $t>0$, i.e.
\begin{align*}
 \heat_t(\xi) = t^{-3/2}\heat_1(t^{-1/2}\xi), \quad\text{with}\quad
 \heat_1(\zeta) = (4\pi)^{-3/2}\exp\Big(-\frac14|\zeta|^2\Big).
\end{align*}
Additionally, one has
\begin{align}
   \label{eq:yuccaiter}
   \yucca_\sigma^k = \int_0^\infty \heat_{\sigma r}\frac{r^{k-1}e^{-r}}{\Gamma(k)}\dd r.
 \end{align}
 Moreover, $\yucca_\sigma^k\in W^{1,q}(\Rdrei)$ for each $q\in[1,3/2)$,
 and there are universal constants $Y_q$ such that
 \begin{align}
   \label{eq:yuccaest}
   \|\dff\yucca_\sigma^k\|_{L^q(\Rdrei)} \le Y_q (\sigma k)^{-Q}
   \quad \text{where} \quad Q:=2-\frac3{2q}\in[1/2,1).
 \end{align}
\end{enumerate}
\end{lemma}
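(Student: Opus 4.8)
The plan is to prove the three claims in Lemma \ref{lemma:reg_yuk}(c) in turn, the first being a classical subordination identity, the second an elementary computation from it, and the third (the estimate \eqref{eq:yuccaest}) the part requiring genuine work.

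For the representation $\yucca_\sigma(x) = \int_0^\infty \heat_{\sigma t}(x)e^{-t}\dd t$, I would argue on the Fourier side: $\widehat{\heat_t}(\xi) = e^{-t|\xi|^2}$ (with the appropriate normalization), so $\int_0^\infty \widehat{\heat_{\sigma t}}(\xi) e^{-t}\dd t = \int_0^\infty e^{-t(\sigma|\xi|^2+1)}\dd t = (\sigma|\xi|^2+1)^{-1}$, which is exactly $\widehat{\yucca_\sigma}(\xi)$ since $\yucca_\sigma$ solves $-\sigma\Delta h + h = \delta_0$ by part (a). Uniqueness of Fourier transforms of $L^1$ functions (both sides are nonnegative and integrable, with integral $1$) gives the pointwise identity away from the origin. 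For the iterate formula \eqref{eq:yuccaiter}, convolution corresponds to multiplication of Fourier transforms, so $\widehat{\yucca_\sigma^k}(\xi) = (\sigma|\xi|^2+1)^{-k}$; on the other hand $\int_0^\infty \widehat{\heat_{\sigma r}}(\xi)\frac{r^{k-1}e^{-r}}{\Gamma(k)}\dd r = \frac{1}{\Gamma(k)}\int_0^\infty r^{k-1}e^{-r(\sigma|\xi|^2+1)}\dd r = (\sigma|\xi|^2+1)^{-k}$ by the definition of the Gamma function, so the two agree. Positivity of the kernels again upgrades this to a pointwise a.e.\ identity.

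The estimate \eqref{eq:yuccaest} is the heart of the lemma. Differentiating \eqref{eq:yuccaiter} under the integral sign gives $\dff\yucca_\sigma^k = \int_0^\infty \dff\heat_{\sigma r}\,\frac{r^{k-1}e^{-r}}{\Gamma(k)}\dd r$, so by Minkowski's integral inequality, $\|\dff\yucca_\sigma^k\|_{L^q} \le \frac{1}{\Gamma(k)}\int_0^\infty \|\dff\heat_{\sigma r}\|_{L^q}\, r^{k-1}e^{-r}\dd r$. The scaling $\heat_t(\xi) = t^{-3/2}\heat_1(t^{-1/2}\xi)$ yields $\dff\heat_t(\xi) = t^{-2}(\dff\heat_1)(t^{-1/2}\xi)$, hence $\|\dff\heat_t\|_{L^q(\Rdrei)} = t^{-2}\,t^{3/(2q)}\|\dff\heat_1\|_{L^q(\Rdrei)} = c_q\, t^{-Q}$ with $Q = 2 - \tfrac{3}{2q}$; here one needs $q < 3/2$ precisely so that $Q < 1$, which is what will make the $r$-integral near $0$ converge, and $\dff\heat_1 \in L^q$ for all $q \ge 1$ since it is a Schwartz function. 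Substituting and pulling out the $\sigma$-scaling gives $\|\dff\yucca_\sigma^k\|_{L^q} \le c_q\,\sigma^{-Q}\,\frac{1}{\Gamma(k)}\int_0^\infty r^{k-1-Q}e^{-r}\dd r = c_q\,\sigma^{-Q}\,\frac{\Gamma(k-Q)}{\Gamma(k)}$. It then remains to bound $\Gamma(k-Q)/\Gamma(k) \le C\, k^{-Q}$ uniformly in $k\in\N$; this follows from the asymptotics $\Gamma(k-Q)/\Gamma(k) \sim k^{-Q}$ as $k\to\infty$ (e.g.\ from Stirling, or from the standard estimate $\Gamma(k+a)/\Gamma(k+b) \le C_{a,b}\,k^{a-b}$ valid for all $k\ge 1$ when $a\le b$), absorbing the finitely many small-$k$ cases into the constant. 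Combining, $\|\dff\yucca_\sigma^k\|_{L^q} \le Y_q(\sigma k)^{-Q}$.

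I expect the main obstacle to be purely bookkeeping: making sure the differentiation under the integral sign in the $r$-variable is justified (dominated convergence, using that $\|\dff\heat_{\sigma r}\|_{L^q}$ times $r^{k-1}e^{-r}$ is integrable thanks to $Q<1$ at $r=0$ and the exponential decay at $r=\infty$), and tracking the exact powers of $\sigma$, $r$ and $k$ through the heat-kernel scaling so that the final exponent matches $Q = 2 - 3/(2q)$. The membership $\yucca_\sigma^k \in W^{1,q}(\Rdrei)$ itself is an immediate byproduct: the $L^q$ bound on $\dff\yucca_\sigma^k$ is finite, and $\|\yucca_\sigma^k\|_{L^q}$ is controlled by the same subordination argument (or by Young's inequality, since $\yucca_\sigma \in L^1$ and one factor lies in $L^q$). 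The qualitative statement that $\krnl_\kappa, \yucca_\sigma$ are fundamental solutions in part (a) and the $W^{2,p}$ Calderón–Zygmund-type bound \eqref{eq:reg_yuk} in part (b) are standard and can be quoted from the literature on Bessel potentials; I would relegate the full verification to Appendix \ref{app:yukawa} as the authors indicate.
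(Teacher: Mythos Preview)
Your argument is correct and follows essentially the same skeleton as the paper's proof in Appendix \ref{app:yukawa}: both differentiate the subordination formula, pass the $L^q$-norm inside the $r$-integral, compute $\|\dff\heat_t\|_{L^q}=c_q t^{-Q}$ by scaling, and reduce to bounding $\Gamma(k-Q)/\Gamma(k)$ by $C k^{-Q}$. The paper phrases the norm-inside-integral step as Jensen's inequality (using that $r^{k-1}e^{-r}/\Gamma(k)\dd r$ is a probability measure), which is the same content as your Minkowski step.

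There are two minor methodological differences worth noting. First, for the representation formulas you work on the Fourier side, whereas the paper cites \cite[Thm.~6.23]{lieb2001} for the base identity and then proves \eqref{eq:yuccaiter} by a direct induction using the semigroup property $\heat_{t_1}*\heat_{t_2}=\heat_{t_1+t_2}$ together with a change of variables; your Fourier argument is slicker but relies on the same ingredients implicitly. Second, for the Gamma-ratio bound the paper avoids asymptotics entirely: it shows that the sequence $a_k:=k^Q\Gamma(k-Q)/\Gamma(k)$ is monotonically decreasing by computing
\[
\frac{a_{k+1}}{a_k}=\Big(1+\frac1k\Big)^{Q}\Big(1-\frac{Q}{k}\Big)<1
\]
from convexity of $\xi\mapsto(1+\xi)^{-Q}$, so $a_k\le a_1=\Gamma(1-Q)$ and one may take $Y_q=\Gamma(1-Q)\|\dff\heat_1\|_{L^q}$. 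This gives an explicit constant without invoking Stirling, which is a small advantage over your ``absorb finitely many small-$k$ cases'' step.
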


Now, we are in position to prove several estimates on the stationary solution:
\begin{prop}[Estimates on the stationary solution]\label{prop:est_infty}
The following uniform estimates hold for all $x\in\Rdrei$:
\begin{enumerate}[(a)]
\item $u_\infty(x)\le U_0-\eps V \phi'(0)$, where $U_0\in\R$ is chosen in such a way that $\int_\Rdrei[U_0-W]_+\dd x=1$ and $V>0$ is the constant from Prop. \ref{prop:ex_infty}.
\item $|\dff v_\infty(x)|\le C\eps$ for some constant $C>0$.
\item $-C'\eps \eins\le\dff ^2v_\infty(x)\le C'\eps \eins$ in the sense of symmetric matrices, for some constant $C'>0$.
\end{enumerate}
\end{prop}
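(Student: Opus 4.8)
The plan is to read off everything from the Euler--Lagrange system \eqref{eq:v_infty}\&\eqref{eq:u_infty}, the a priori bound $v_\infty\le V$ of Proposition~\ref{prop:ex_infty}, and the Yukawa representation $v_\infty=\krnl_\kappa*f$ with the nonnegative source $f:=-\eps u_\infty\phi'(v_\infty)$. As a preliminary step I would control the Lagrange multiplier $U_\eps$ and the support of $u_\infty$: since $\phi$ is decreasing, $\phi(v_\infty)\le\phi(0)$, so \eqref{eq:u_infty} gives $u_\infty\ge[U_\eps-W-\eps\phi(0)]_+$; integrating and using $\|u_\infty\|_{L^1(\Rdrei)}=1=\int_\Rdrei[U_0-W]_+\dd x$ together with the monotonicity (strict where the value is positive, $W$ being continuous) of $r\mapsto\int_\Rdrei[r-W]_+\dd x$ forces $U_\eps\le U_0+\eps\phi(0)$. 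Since $\phi(V)\le\phi(v_\infty)\le\phi(0)$, the quantity $U_\eps-\eps\phi(v_\infty)$ is bounded uniformly in $\eps\in(0,\bar\eps)$, so by quadratic growth of $W$ the set $\{u_\infty>0\}=\{W<U_\eps-\eps\phi(v_\infty)\}$ lies in a fixed ball $B_\rho$; in particular $\dff W$ is bounded on $\{u_\infty>0\}$.

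For part~(a), convexity of $\phi$ together with $v_\infty\le V$ and $\phi'(0)<0$ give $\phi(v_\infty)\ge\phi(0)+\phi'(0)v_\infty\ge\phi(0)+\phi'(0)V$; inserting this, together with $W\ge0$ and $U_\eps\le U_0+\eps\phi(0)$, into \eqref{eq:u_infty} yields $u_\infty\le[U_0-\eps V\phi'(0)]_+=U_0-\eps V\phi'(0)$, the right-hand side being positive because $U_0>0$ and $-\phi'(0)>0$. In particular $\|u_\infty\|_{L^\infty(\Rdrei)}\le M:=U_0+\bar\eps V|\phi'(0)|$, uniformly in $\eps$.

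For part~(b), I would use that, by Lemma~\ref{lemma:reg_yuk}, $\krnl_\kappa$ is the fundamental solution of $-\Delta+\kappa$, and note that $\dff\krnl_\kappa\in L^1(\Rdrei)$ (it behaves like $|x|^{-2}$ near the origin, which is integrable in $\Rdrei$, and decays exponentially). Since $0\le f\le\eps|\phi'(0)|u_\infty$, part~(a) gives $\|f\|_{L^\infty(\Rdrei)}\le\eps|\phi'(0)|M$, and writing $\dff v_\infty=(\dff\krnl_\kappa)*f$ and applying Young's inequality in the form $L^1*L^\infty\to L^\infty$ gives $\|\dff v_\infty\|_{L^\infty(\Rdrei)}\le\|\dff\krnl_\kappa\|_{L^1(\Rdrei)}\|f\|_{L^\infty(\Rdrei)}\le C\eps$.

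For part~(c) the obstacle is that $\dff^2\krnl_\kappa\notin L^1(\Rdrei)$, so the Hessian cannot be estimated by a direct convolution; instead I would differentiate the equation once. On $\{u_\infty>0\}\subseteq B_\rho$ one has $\dff u_\infty=-\dff W-\eps\phi'(v_\infty)\dff v_\infty$ from \eqref{eq:u_infty}, which is bounded by the preliminary step and part~(b), while $\dff u_\infty=0$ a.e.\ on $\Rdrei\setminus B_\rho$; hence $u_\infty\in W^{1,\infty}(\Rdrei)$. Consequently $\dff f=-\eps(\dff u_\infty)\phi'(v_\infty)-\eps u_\infty\phi''(v_\infty)\dff v_\infty$ is bounded, with $\|\dff f\|_{L^\infty(\Rdrei)}\le C\eps$ by part~(a), part~(b), $|\phi'|\le|\phi'(0)|$ and $0\le\phi''\le\bphi$. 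Since $\partial_j v_\infty\in W^{1,2}(\Rdrei)$ solves $-\Delta(\partial_j v_\infty)+\kappa\,\partial_j v_\infty=\partial_j f$, uniqueness for $-\Delta+\kappa$ on $W^{1,2}(\Rdrei)$ identifies $\partial_j v_\infty=\krnl_\kappa*\partial_j f$, so $\partial_i\partial_j v_\infty=(\partial_i\krnl_\kappa)*\partial_j f$ and $\|\partial_i\partial_j v_\infty\|_{L^\infty(\Rdrei)}\le\|\dff\krnl_\kappa\|_{L^1(\Rdrei)}\|\dff f\|_{L^\infty(\Rdrei)}\le C\eps$ for all $i,j$. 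Thus the operator norm of the symmetric matrix $\dff^2 v_\infty(x)$ is at most $C'\eps$ for every $x$, which is the asserted two-sided bound. The step I expect to be genuinely delicate is exactly this compact-support-plus-bounded-gradient property of $u_\infty$: it is what lets one trade the non-integrable kernel $\dff^2\krnl_\kappa$ for the integrable $\dff\krnl_\kappa$, and it relies crucially on the explicit form $u_\infty=[U_\eps-W-\eps\phi(v_\infty)]_+$ and on the confinement $W$ growing quadratically.
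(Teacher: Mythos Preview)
Your argument is correct, and for parts (b) and (c) it is considerably simpler than the paper's. Part (a) and the preliminary control of $U_\eps$ and of the support of $u_\infty$ are essentially identical to the paper's.

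For (b), the paper invokes the full $L^p\to W^{2,p}$ regularity of Lemma~\ref{lemma:reg_yuk}(b) with $p=4$ together with the embedding $W^{2,4}(\Rdrei)\hookrightarrow C^1(\Rdrei)$, whereas you use only the elementary fact $\dff\krnl_\kappa\in L^1(\Rdrei)$ and Young's inequality $L^1*L^\infty\to L^\infty$. For (c), the paper proceeds via a Schauder-type H\"older estimate for the kernel $\krnl_\kappa$ (Lemma~\ref{lemma:holder} in Appendix~\ref{app:holder}), applied to the H\"older continuous source $f_{v_\infty}$, combined with a separate pointwise argument far from the support. Your route avoids this entirely: you shift one derivative from the non-integrable kernel $\dff^2\krnl_\kappa$ onto the source, writing $\partial_i\partial_j v_\infty=(\partial_i\krnl_\kappa)*\partial_j f$, and then re-use $\dff\krnl_\kappa\in L^1$. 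The paper also records that $u_\infty$ is Lipschitz, but then only exploits H\"older continuity of $f$; you use the Lipschitz bound in full, which is exactly what makes the simpler $L^1*L^\infty$ argument go through. The paper's approach is a bit more robust (it would still work for a merely H\"older source), but in the present setting your argument is more economical and makes Appendix~\ref{app:holder} unnecessary for this proposition.
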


\begin{proof}
\begin{enumerate}[(a)]
\item We first prove that $U_\eps\le U_0+\eps \phi(0)$, which in turn follows if
\begin{align*}
\int_\Rdrei [U_0+\eps \phi(0)-W-\eps \phi(v_\infty)]_+\dd x&\ge 1.
\end{align*}
One has
\begin{align}
&\int_\Rdrei [U_0+\eps \phi(0)-W-\eps \phi(v_\infty)]_+\dd x\nonumber\\
&=\int_{\{U_0-W\ge 0\}}[U_0-W+\eps(\phi(0)-\phi(v_\infty))]\dd x+\int_{\{0>U_0-W\ge \eps(\phi(v_\infty)-\phi(0))\}}[U_0-W+\eps(\phi(0)-\phi(v_\infty))]\dd x.\label{eq:lemmaL2}
\end{align}
From $\phi(0)-\phi(v_\infty)\ge 0$ and the definition of $U_0$, we deduce that the first term in \eqref{eq:lemmaL2} is larger or equal to $1$. The second term in \eqref{eq:lemmaL2} is nonnegative because the integrand is nonnegative on the domain of integration. 

Now, if $u_\infty(x)>0$ for some $x\in\Rdrei$, we also have due to convexity of $\phi$:
\begin{align*}
u_\infty(x)&\le U_\eps-W(x)-\eps\phi(0)-\eps v_\infty(x)\phi'(0)\le U_0+\eps\phi(0)-\eps\phi(0)-\eps V\phi'(0),
\end{align*}
from which the desired estimate follws.
\item Define
\begin{align*}
f_v:\Rdrei\to\R,\quad f(x):=-\eps [U_\eps-W(x)-\eps\phi(v(x))]_+\phi'(v(x)).
\end{align*}
Then, $f_v\in L^\infty(\Rdrei)$ with compact support $\mathrm{supp}(f_v)\subset B_R(0)$ where $R>0$ can be chosen independent of $\eps\in(0,1)$. Moreover, by Lemma \ref{lemma:reg_yuk}(a), $(u_\infty,v_\infty)$ is the solution to the integral equation
\begin{align*}
v&=-(\krnl_\kappa*f_v),
\end{align*}
with the Yukawa potential $\krnl_\kappa$ defined in \eqref{eq:yukawa}. Since $W^{2,4}(\Rdrei)$ is continuously embedded in $C^1(\Rdrei)$ \cite[Appendix, sec. (45) et seq.]{zeidler1990} and $f_v\in L^4(\Rdrei)$, we deduce from Lemma \ref{lemma:reg_yuk}(b) that
\begin{align*}
\|v\|_{C^1(\Rdrei)}&\le \tilde C\|f_v\|_{L^4(\Rdrei)},
\end{align*}
for some constant $\tilde C>0$. Hence, we obtain (b) by using (a):
\begin{align*}
\|\dff v_\infty\|_{L^\infty(\Rdrei)}&\le \tilde C\|f_{v_\infty}\|_{L^4(\Rdrei)}\le \tilde C\eps(U_0-\eps V \phi'(0))|\phi'(0)||B_R(0)|^{1/4}=:C\eps.
\end{align*}
\item First, consider $x\in \Rdrei\backslash B_{R+1}(0)$, where $R>0$ is such that $\mathrm{supp}(f_{v_\infty})\subset B_R(0)$. Smoothness of $\krnl_\kappa$ on $\Rdrei\backslash\{0\}$ yields for all $i,j\in\{1,2,3\}$:
\begin{align*}
|\partial_i\partial_j v_\infty(x)|&=\left|\int_{B_R(0)}(\partial_{x_i}\partial_{x_j}\krnl_\kappa (x-y))f_{v_\infty}(y)\dd y\right|\\
&=\left|\int_{x+B_R(0)}(\partial_{i}\partial_{j}\krnl_\kappa (z))f_{v_\infty}(x-z)\dd z\right|,
\end{align*}
where the last equality follows by the transformation $z:=x-y$. Obviously, we obtain the estimate
\begin{align*}
|\partial_i\partial_j v_\infty(x)|&\le \|f_{v_\infty}\|_{L^\infty(\Rdrei)}\int_{\Rdrei\backslash B_1(0)}|\partial_i\partial_j \krnl_\kappa(z)|\dd z.
\end{align*}
Since for $|z|\ge 1$, one has (see Appendix \ref{app:holder} for the derivatives of $\krnl_\kappa$)
\begin{align*}
|\partial_i\partial_j \krnl_\kappa(z)|&\le \frac{C(\kappa)\exp(-\sqrt{\kappa}z)}{4\pi|z|},
\end{align*}
we arrive at
\begin{align*}
|\partial_i\partial_j v_\infty(x)|&\le C(\kappa)\|f_{v_\infty}\|_{L^\infty(\Rdrei)}\int_1^\infty \exp(-\sqrt{\kappa}r)r\dd r,
\end{align*}
the last integral obviously being finite.

Consider now the case $|x|\le R+1$ and set $y:=(R+2)e_1\neq x$. By the triangular inequality, we have for $\alpha\in (0,1)$ that
\begin{align*}
|\partial_i\partial_j v_\infty(x)|\le |\partial_i\partial_jv_\infty (y)|+\frac{|\partial_i\partial_j v_\infty(x)-\partial_i\partial_j v_\infty(y)|}{|x-y|^\alpha}{|x-y|^\alpha}.
\end{align*}
By the arguments above, $f_{v_\infty}$ is $\alpha$-H\"older-continuous for some $\alpha\in(0,1)$ since $u_\infty$ is Lipschitz-continuous and of compact support. By Lemma \ref{lemma:holder} in Appendix \ref{app:holder}, we know that there exists $C>0$ such that
\begin{align*}
[\partial_i\partial_j v_\infty]_{C^{0,\alpha}(\Rdrei)}&\le C[f_{v_\infty}]_{C^{0,\alpha}(\Rdrei)}.
\end{align*}
Hence, since $|x-y|\le 2R+3$, one has
\begin{align*}
|\partial_i\partial_j v_\infty(x)|&\le |\partial_i\partial_jv_\infty (y)|+C(2R+3)^\alpha[f_{v_\infty}]_{C^{0,\alpha}(\Rdrei)}.
\end{align*}
Combining both cases yields 
\begin{align*}
|\partial_i\partial_j v_\infty(x)|&\le |\partial_i\partial_jv_\infty ((R+2)e_1)|+C(2R+3)^\alpha[f_{v_\infty}]_{C^{0,\alpha}(\Rdrei)}\\
&\le C_0\|f_{v_\infty}\|_{L^\infty(\Rdrei)}+C_1 \|f_{v_\infty}\|_{W^{1,\infty}(\Rdrei)},
\end{align*}
for some $C_0,C_1>0$ and \emph{all} $x\in\Rdrei$. Using (a) and (b), it is straightforward to conclude that there exists $C_2>0$ with
\begin{align*}
(\|\dff f\|_{C^0(\Rdrei)}+\|f\|_{L_\infty(\Rdrei)})&\le C_2 \eps.
\end{align*}
All in all, we proved the existence of $C_3>0$ such that for all $x\in\Rdrei$ and all $i,j\in\{1,2,3\}$:
\begin{align*}
|\partial_i\partial_j v_\infty(x)|&\le C_3\eps.
\end{align*}
Obviously, this estimate yields the assertion (for a different constant $C'>0$).
\end{enumerate}
\end{proof}

\section{Convergence to equilibrium}\label{sec:conv}
In this section, we prove Theorem \ref{thm:convergence}. The strategy of proof is as follows: We first show that the entropy $\calH(u,v)-\calH_\infty$ can indeed be decomposed as in \eqref{eq:decompose}. Furthermore, the second component $v_\tau^n$ of the discrete solution admits a control estimate enabling us to prove boundedness of the auxiliary entropy $\lyp_u(u)+\lyp_v(v)$ in \eqref{eq:decompose} for large times. From that, we can deduce an explicit temporal bound such that exponential decay to zero of this entropy occurs for sufficiently large times. These estimates are finally converted into the desired estimate for the continuous weak solution, completing the proof of Theorem \ref{thm:convergence}.\\

Since our claim only concerns the solutions $(u,v)$ to \eqref{eq:pde_u}--\eqref{eq:init}
that are constructed as in the proof of Theorem \ref{thm:existence}, i.e. by the minimizing movement scheme,
we assume in the following that we are given a family of time-discrete approximations $(u_\tau^n,v_\tau^n)_{n\in\N}$
that converge to the weak solution $(u,v)$ in the sense discussed in Section \ref{sec:existence} as $\tau\downarrow0$. Therefore, we may assume without loss of generality that $\tau>0$ is sufficiently small.

Throughout this section, we shall use the abbreviation $[a]_\tau := \frac1\tau\log (1+a\tau)$, where $a>0$.
Note that if, for every $\tau>0$ and an index $m_\tau\in\N$ given such that $m_\tau\tau\ge T$ with a fixed $T\ge0$,
then
\begin{align}
 \label{eq:tauconverge}
 (1+a\tau)^{-m_\tau}\le e^{-[a]_\tau T}\downarrow e^{-a T} \quad\text{as $\tau\downarrow0$.}
\end{align}
In order to keep track of the dependencies of certain quantities on $\eps$,
we are going to define several positive numbers $\eps_j$ such that the estimates in a certain proof are uniform with respect to $\eps\in(0,\eps_j)$.
When we want to emphasize that a quantity is independent of $\eps\in(0,\eps_j)$
-- and also of $\tau$ and the initial condition $(u_0,v_0)$ -- we call it a \emph{system constant}.
System constants are (in principle) expressible as a function of $\lambda_0$, $\kappa$, $\phi$ and truely universal constants. Finally, we write $\calH_\infty:=\calH(u_\infty,v_\infty)$.

\subsection{Decomposition of the entropy}
The key element in the proof of Theorem \ref{thm:convergence} is the decomposition of the entropy functional as announced in \eqref{eq:decompose}.
Introduce the \emph{perturbed potential} $W_\eps$ by
\begin{align}
 \label{eq:perturbed_pot}
 W_{\eps}(x):=W(x)+\eps\phi(v_\infty(x)).
\end{align}
Recall that $(u_\infty,v_\infty)$ is the minimizer of $\calH$ on $\theX$,
and define
\begin{align*}
 \lyp_u(u) &:= \int_\Rdrei \Big(\frac12(u^2-u_\infty^2) + W_\eps(u-u_\infty)\Big)\dd x, \\
 \lyp_v(v) &:= \int_\Rdrei \frac12\big(|\dff(v-v_\infty)|^2 + \kappa(v-v_\infty)^2\big)\dd x, \\
 \lyp_*(u,v) &:= \int_\Rdrei \big(u[\phi(v)-\phi(v_\infty)]-u_\infty\phi'(v_\infty)[v-v_\infty]\big)\dd x.
\end{align*}
Finally, let $\lyp(u,v):=\lyp_u(u)+\lyp_v(v)$ denote the \emph{auxiliary entropy}.
\begin{lemma}
 The decomposition \eqref{eq:decompose} holds:
\begin{align*}
 \calH(u,v)-\calH_\infty  = \lyp(u,v) + \eps\lyp_*(u,v).
\end{align*}
\end{lemma}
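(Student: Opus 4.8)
The plan is a direct computation: expand $\calH(u,v)-\calH_\infty$ term by term and match it, summand by summand, against $\lyp_u(u)+\lyp_v(v)+\eps\lyp_*(u,v)$. The only non-algebraic ingredient is the Euler--Lagrange equation \eqref{eq:v_infty} for $v_\infty$; the equation \eqref{eq:u_infty} is \emph{not} needed. We may assume $\calH(u,v)<\infty$, i.e.\ $(u,v)\in\theX\cap(L^2(\Rdrei)\times W^{1,2}(\Rdrei))$, since otherwise both sides are $+\infty$. Under this assumption all integrals below are finite: for the terms involving $W$ one uses that $W$ grows quadratically and $u,u_\infty$ have finite second moment, and for the terms involving $\phi$ one uses that $\phi$ is globally Lipschitz (with constant $-\phi'(0)$) and $v_\infty\in L^\infty(\Rdrei)$ by Proposition \ref{prop:ex_infty}, together with the H\"older--Sobolev bound \eqref{eq:holsob}.

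First I would split $\calH(u,v)-\calH_\infty$ into the \emph{$v$-part} $\int_\Rdrei\big(\tfrac12(|\dff v|^2-|\dff v_\infty|^2)+\tfrac\kappa2(v^2-v_\infty^2)\big)\dd x$, the \emph{$u$-part} $\int_\Rdrei\big(\tfrac12(u^2-u_\infty^2)+W(u-u_\infty)\big)\dd x$, and the coupling term $\eps\int_\Rdrei(u\phi(v)-u_\infty\phi(v_\infty))\dd x$. For the $v$-part I use the pointwise identities $|\dff v|^2-|\dff v_\infty|^2=|\dff(v-v_\infty)|^2+2\,\dff v_\infty\cdot\dff(v-v_\infty)$ and $v^2-v_\infty^2=(v-v_\infty)^2+2v_\infty(v-v_\infty)$, and then integrate by parts, $\int_\Rdrei\dff v_\infty\cdot\dff(v-v_\infty)\dd x=-\int_\Rdrei\Delta v_\infty\,(v-v_\infty)\dd x$, which is legitimate since $v,v_\infty\in W^{1,2}(\Rdrei)$ and $v_\infty\in W^{2,2}(\Rdrei)$ by Proposition \ref{prop:ex_infty}. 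This rewrites the $v$-part as $\lyp_v(v)+\int_\Rdrei(-\Delta v_\infty+\kappa v_\infty)(v-v_\infty)\dd x$, and \eqref{eq:v_infty} turns the last integral into $-\eps\int_\Rdrei u_\infty\phi'(v_\infty)(v-v_\infty)\dd x$.

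For the $u$-part I insert the definition $W_\eps=W+\eps\phi(v_\infty)$ from \eqref{eq:perturbed_pot}: adding and subtracting $\eps\phi(v_\infty)(u-u_\infty)$ gives that the $u$-part equals $\lyp_u(u)-\eps\int_\Rdrei\phi(v_\infty)(u-u_\infty)\dd x$. Combining this remainder with the coupling term, the contributions $u_\infty\phi(v_\infty)$ cancel and one is left with $\eps\int_\Rdrei u\big(\phi(v)-\phi(v_\infty)\big)\dd x$. Adding the three contributions yields
\[
\calH(u,v)-\calH_\infty=\lyp_u(u)+\lyp_v(v)+\eps\int_\Rdrei\big(u[\phi(v)-\phi(v_\infty)]-u_\infty\phi'(v_\infty)[v-v_\infty]\big)\dd x,
\]
which is exactly $\lyp(u,v)+\eps\lyp_*(u,v)$ by the definition of $\lyp_*$.

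There is no serious obstacle here; the computation is elementary. The only points that deserve a word of justification are the integration by parts and the finiteness of the various integrals, both of which are guaranteed by the regularity of $(u_\infty,v_\infty)$ provided by Proposition \ref{prop:ex_infty} and by the restriction to $(u,v)$ in the proper domain of $\calH$, and the single invocation of the stationarity equation \eqref{eq:v_infty}.
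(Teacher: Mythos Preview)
Your proof is correct and follows essentially the same approach as the paper: a direct algebraic computation whose only non-trivial ingredient is the Euler--Lagrange equation \eqref{eq:v_infty}, used via one integration by parts. The sole difference is organizational---the paper expands $\lyp_u+\lyp_v$ and matches to $\calH-\calH_\infty$, while you start from $\calH-\calH_\infty$ and peel off $\lyp_u$, $\lyp_v$, $\eps\lyp_*$---which is not a genuinely different route.
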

\begin{proof}
 By the properties of $\phi$ and the fact that $u_\infty$ has compact support, $\lyp_*$ is well-defined on all of $\theX$,
 while $\lyp_u$ and $\lyp_v$ are finite precisely on $(\prb\cap L^2)(\Rdrei)$ and $W^{1,2}(\Rdrei)$, respectively.
 Thus, both sides in \eqref{eq:decompose} are finite on the same subset of $\theX$.
 Now, for every such pair $(u,v)$, we have on the one hand that
 \begin{align}
   \label{eq:lypu}
   \lyp_u(u) = \int_\Rdrei \Big(\frac12u^2 +  uW + \eps u\phi(v_\infty)\Big)\dd x - \int_\Rdrei \Big(\frac12u_\infty^2 + u_\infty W + \eps u_\infty\phi(v_\infty)\Big)\dd x ,
 \end{align}
 and on the other hand that
 \begin{align*}
   \lyp_v(v) &= \int_\Rdrei \Big(\frac12|\dff v|^2 + \frac\kappa2v^2\Big)\dd x
   + \int_\Rdrei \Big(\frac12|\dff v_\infty|^2 + \frac\kappa2v_\infty^2\Big)\dd x
   - \int_\Rdrei (\dff v\cdot\dff v_\infty + \kappa vv_\infty)\dd x.
 \end{align*}
 Integration by parts in the last integral yields, recalling the defining equation \eqref{eq:v_infty} for $v_\infty$, that
 \begin{align*}
   -\int_\Rdrei (\dff v\cdot\dff v_\infty + \kappa vv_\infty)\dd x
   = \int_\Rdrei (\Delta v_\infty - \kappa v_\infty)v\dd x
   = \eps \int_\Rdrei u_\infty\phi'(v_\infty)v\dd x.
 \end{align*}
 Similarly, integration by parts in the middle integral leads to
 \begin{align*}
   \int_\Rdrei \Big(\frac12|\dff v_\infty|^2 + \frac\kappa2v_\infty^2\Big)\dd x
   = -\frac\eps2 \int_\Rdrei u_\infty\phi'(v_\infty)v_\infty\dd x.
 \end{align*}
 And so,
 \begin{align}
   \label{eq:lypv}
   \begin{split}
     \lyp_v(v) &= \int_\Rdrei \Big(\frac12|\dff v|^2 + \frac\kappa2v^2\Big)\dd x
     - \int_\Rdrei \Big(\frac12|\dff v_\infty|^2 + \frac\kappa2v_\infty^2\Big)\dd x  \\
     & \qquad +\eps \int_\Rdrei u_\infty\phi'(v_\infty)(v-v_\infty)\dd x.
   \end{split}
 \end{align}
 Combining \eqref{eq:lypu} and \eqref{eq:lypv} with the definition of $\lyp_*$ yields \eqref{eq:decompose}.
\end{proof}

We summarize some useful properties of the auxiliary entropy $\lyp$ in the following.
\begin{prop}[Properties of $\lyp$]
 \label{prop:prop_L}
 There are constants $K,L>0$ and some $\eps_0>0$ such that the following is true for every $\eps\in(0,\eps_0)$:
 \begin{enumerate}[(a)]
 \item $W_{\eps}\in C^2(\Rdrei)$ is $\lambda_\eps$-convex with $\lambda_\eps:=\lambda_0-L\eps>0$.
 \item $\lyp_u$ is $\lambda_\eps$-convex in $(\prb(\Rdrei),\W_2)$,
   and for every $u\in(\prb\cap W^{1,2})(\Rdrei)$, one has
   \begin{align}
     \label{eq:subdiff_u}
     \frac12\|u-u_\infty\|_{L^2(\Rdrei)}^2
     \le\lyp_u(u)
     \le\frac1{2\lambda_\eps}\int_\Rdrei u|\dff(u+W_{\eps})|^2\dd x.
   \end{align}
 \item $\lyp_v$ is $\kappa$-convex in $L^2(\Rdrei)$,
   and for every $v\in W^{2,2}(\Rdrei)$, one has
   \begin{align}
     \label{eq:subdiff_v}
     \frac\kappa2\|v-v_\infty\|_{L^2(\Rdrei)}^2
     \le \lyp_v(v)
\le\frac1{2\kappa}\int_\Rdrei\big(\Delta(v-v_\infty)-\kappa(v-v_\infty)\big)^2\dd x.
   \end{align}
 \item For every $(u,v)\in\theX$,
   \begin{align}
     \label{eq:lypbyH}
     \lyp(u,v) \le (1+K\eps)\big(\calH(u,v)-\calH_\infty\big).
   \end{align}
 \end{enumerate}
\end{prop}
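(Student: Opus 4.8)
The plan is to prove the four parts in order — the statement being essentially an assembly of facts already established about $(u_\infty,v_\infty)$ — with (a) feeding into (b), and with the decomposition \eqref{eq:decompose} together with Proposition~\ref{prop:propH}(c) supplying (d). For (a), I would first note that $v_\infty$ solves the elliptic equation \eqref{eq:v_infty} with H\"older-continuous right-hand side (cf.\ the proof of Proposition~\ref{prop:est_infty}(c)), so Schauder theory gives $v_\infty\in C^2(\Rdrei)$, hence $W_\eps=W+\eps\phi(v_\infty)\in C^2(\Rdrei)$ since $\phi\in C^2$. Differentiating, $\dff^2 W_\eps=\dff^2 W+\eps(\phi''(v_\infty)\,\dff v_\infty\otimes\dff v_\infty+\phi'(v_\infty)\,\dff^2 v_\infty)$, and by \eqref{eq:assumpphi} together with Proposition~\ref{prop:est_infty}(b),(c) the parenthesis has operator norm $\le\bphi(C\eps)^2+|\phi'(0)|C'\eps\le\tilde L\eps$ for $\eps\le1$, with a system constant $\tilde L$; combining this with \eqref{eq:assumpW} yields $\dff^2 W_\eps\ge(\lambda_0-\tilde L\eps)\eins$, which is (a) with $L:=\tilde L$ as soon as $\eps_0<\lambda_0/L$.

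For (b): up to the additive constant $\int(\tfrac{1}{2}u_\infty^2+W_\eps u_\infty)\dd x$ one has $\lyp_u(u)=\int(\tfrac{1}{2}u^2+W_\eps u)\dd x$, whose first term is $0$-geodesically convex by Theorem~\ref{thm:crit_conv}(a) (take $h(r)=r^2/2$, so $r\mapsto r^3h(r^{-3})=\tfrac{1}{2}r^{-3}$ is convex and nonincreasing on $(0,\infty)$) and whose linear term is $\lambda_\eps$-geodesically convex by Theorem~\ref{thm:crit_conv}(b) and part (a); hence $\lyp_u$ is $\lambda_\eps$-convex. Equation \eqref{eq:u_infty} is exactly the Euler--Lagrange identity for minimizing $\int(\tfrac{1}{2}u^2+W_\eps u)\dd x$ over $\prb(\Rdrei)$, so by strict convexity $u_\infty$ is its unique minimizer, with $\lyp_u(u_\infty)=0$. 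The lower bound in \eqref{eq:subdiff_u} follows from $\tfrac{1}{2}(u^2-u_\infty^2)=\tfrac{1}{2}(u-u_\infty)^2+u_\infty(u-u_\infty)$ and $\int(u-u_\infty)\dd x=0$, which give $\lyp_u(u)=\tfrac{1}{2}\|u-u_\infty\|_{L^2}^2+\int(u_\infty+W_\eps-U_\eps)(u-u_\infty)\dd x$, the last integrand vanishing on $\{u_\infty>0\}$ and equalling $(W_\eps-U_\eps)u\ge0$ on $\{u_\infty=0\}$ by \eqref{eq:u_infty}. The upper bound comes from applying \eqref{eq:subdiff} to $\lyp_u$ and identifying, via Remark~\ref{rem:formal}, the one-sided dissipation on its right-hand side with $\int u|\dff(u+W_\eps)|^2\dd x$. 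Part (c) is analogous and easier: $v\mapsto v-v_\infty$ is an affine isometry of $L^2(\Rdrei)$ mapping segments to segments, so $\lyp_v$ is $\kappa$-convex because $w\mapsto\tfrac{1}{2}\|\dff w\|_{L^2}^2+\tfrac{\kappa}{2}\|w\|_{L^2}^2$ is (as in the proof of Proposition~\ref{prop:reg_min}), $v_\infty$ is trivially its minimizer, the lower bound in \eqref{eq:subdiff_v} is just the omission of the gradient term, and the upper bound follows from \eqref{eq:subdiff} exactly as in (b), the dissipation of $\lyp_v$ along its $L^2$-gradient flow $\partial_t v=\Delta(v-v_\infty)-\kappa(v-v_\infty)$ being $\int(\Delta(v-v_\infty)-\kappa(v-v_\infty))^2\dd x$.

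For (d): both sides are $+\infty$ unless $(u,v)\in\theX\cap(L^2(\Rdrei)\times W^{1,2}(\Rdrei))$, so assume this. By \eqref{eq:decompose}, $\lyp(u,v)=(\calH(u,v)-\calH_\infty)-\eps\lyp_*(u,v)$ with $\calH(u,v)-\calH_\infty\ge0$ (Proposition~\ref{prop:ex_infty}), so it suffices to bound $|\lyp_*(u,v)|$ by a system-constant multiple of $\calH(u,v)-\calH_\infty$. Writing
\[
\lyp_*(u,v)=\int_\Rdrei(u-u_\infty)[\phi(v)-\phi(v_\infty)]\dd x+\int_\Rdrei u_\infty[\phi(v)-\phi(v_\infty)-\phi'(v_\infty)(v-v_\infty)]\dd x ,
\]
the first integral is $\le|\phi'(0)|\,\|u-u_\infty\|_{L^2}\|v-v_\infty\|_{L^2}$ by \eqref{eq:assumpphi} and Cauchy--Schwarz, and the second lies in $[0,\tfrac{\bphi}{2}\|u_\infty\|_{L^\infty}\|v-v_\infty\|_{L^2}^2]$ by Taylor's theorem and \eqref{eq:assumpphi}, with $\|u_\infty\|_{L^\infty}$ a system constant by Proposition~\ref{prop:est_infty}(a); Young's inequality then gives $|\lyp_*(u,v)|\le C(\|u-u_\infty\|_{L^2}^2+\|v-v_\infty\|_{L^2}^2)$ with a system constant $C$. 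Finally, since $(u_\infty,v_\infty)$ minimizes $\calH$ and $\calH$ is $\lambda'$-convex with respect to $\dist'$ for small $\eps$ (Proposition~\ref{prop:propH}(c), with $\lambda'$ bounded below by a positive system constant as $\eps\downarrow0$), the first inequality in \eqref{eq:subdiff} read for $\dist'$ gives $\|u-u_\infty\|_{L^2}^2+\|v-v_\infty\|_{L^2}^2\le\tfrac{2}{\lambda'}(\calH(u,v)-\calH_\infty)$, whence $|\lyp_*(u,v)|\le K(\calH(u,v)-\calH_\infty)$ with $K:=2C/\lambda'$ and $\lyp(u,v)\le(1+K\eps)(\calH(u,v)-\calH_\infty)$. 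It remains to fix $\eps_0$ smaller than $\lambda_0/L$, than $1$, and than the thresholds from Propositions~\ref{prop:propH}(c), \ref{prop:ex_infty} and \ref{prop:est_infty}, so that all of the above is uniform in $\eps\in(0,\eps_0)$.

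The hard part will not be any single step but the correct bookkeeping of the $\eps$-dependencies. The two places that genuinely need care are, in (a), the pointwise Hessian bound on $\phi(v_\infty)$, which relies crucially on the $O(\eps)$ estimates for $\dff v_\infty$ and $\dff^2 v_\infty$ from Proposition~\ref{prop:est_infty}, and, in (b) and (c), the identification of the one-sided difference quotient in \eqref{eq:subdiff} with the explicit Fisher-information and $L^2$-gradient integrals — i.e.\ the metric-slope identity for $\lyp_u$ and $\lyp_v$ — which is standard but technically the deepest ingredient, and which is exactly why the statement assumes the regularity $u\in W^{1,2}(\Rdrei)$ and $v\in W^{2,2}(\Rdrei)$.
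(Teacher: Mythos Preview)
Your proof is correct, and for parts (a)--(c) it is essentially the paper's argument: the Hessian computation for $W_\eps$ using Proposition~\ref{prop:est_infty}, the convexity via Theorem~\ref{thm:crit_conv}, the lower bound in \eqref{eq:subdiff_u} via the Euler--Lagrange relation and the mass constraint, and the upper bounds in \eqref{eq:subdiff_u}--\eqref{eq:subdiff_v} via \eqref{eq:subdiff} all match the paper (which for the upper bound in (b) cites the explicit Wasserstein subdifferential from \cite[Lemma~10.4.1]{savare2008} rather than the formal Remark~\ref{rem:formal}, but this is the same content).

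For (d) you take a genuinely different route. The paper stays internal to the proposition: having just established the lower bounds $\lyp_u(u)\ge\tfrac12\|u-u_\infty\|_{L^2}^2$ and $\lyp_v(v)\ge\tfrac\kappa2\|v-v_\infty\|_{L^2}^2$, it drops the (nonnegative) second integral in your decomposition of $\lyp_*$ and bounds the first by $-\tfrac12\|u-u_\infty\|_{L^2}^2-\tfrac{\phi'(0)^2}{2}\|v-v_\infty\|_{L^2}^2\ge-K'\lyp(u,v)$ with $K'=\max(1,\phi'(0)^2/\kappa)$; then \eqref{eq:decompose} gives $(1-K'\eps)\lyp\le\calH-\calH_\infty$ directly. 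You instead import Proposition~\ref{prop:propH}(c) to control $\|u-u_\infty\|_{L^2}^2+\|v-v_\infty\|_{L^2}^2$ by $\tfrac{2}{\lambda'}(\calH-\calH_\infty)$, which also works but brings in the $\dist'$-convexity of $\calH$ as an extra ingredient and requires checking that $\lambda'$ stays bounded away from zero as $\eps\downarrow0$ (which is true from the proof of Proposition~\ref{prop:propH}(c), though not stated there). The paper's route is slightly more self-contained and yields the explicit constant $K'$; yours gives a two-sided bound on $\lyp_*$, which is more than needed here but not harmful.
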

\begin{proof}
 \begin{enumerate}[(a)]
\item  Since $W_\eps=W+\eps\phi(v_\infty)$, the chain rule yields
 \begin{align*}
   \dff^2W_\eps = \dff^2W + \eps\phi''(v_\infty)\dff v_\infty\otimes\dff v_\infty + \eps\phi'(v_\infty)\dff^2v_\infty.
 \end{align*}
 Using our assumptions on $\phi$ and by Proposition \ref{prop:est_infty},
 there are some $L>0$ and some $\eps_0$ such that
 \begin{align*}
   \phi''(v_\infty)\dff v_\infty\otimes\dff v_\infty + \phi'(v_\infty)\dff^2v_\infty \ge -L\eins
 \end{align*}
 holds uniformly with respect to $\eps\in(0,\eps_0)$.
 And thus also $\dff^2W_\eps\ge\lambda_\eps\eins$, with the indicated definition of $\lambda_\eps$.
 \smallskip

 \item  Since $W_\eps$ is $\lambda_\eps$-convex,
 also $\lyp_u$ is $\lambda_\eps$-geodesically convex in $\W_2$ because it is
 the sum of a $0$-geodesically convex functional and a $\lambda_\eps$-geodesically convex functional,
 see Theorem \ref{thm:crit_conv}.

 The Wasserstein subdifferential of $\lyp_u$ has been calculated in \cite[Lemma 10.4.1]{savare2008}. Together with \eqref{eq:subdiff}, this shows the second inequality in \eqref{eq:subdiff_u}.
 Concerning the first inequality, observe that
 \begin{align*}
   \lyp_u(u) = \frac12\int_\Rdrei(u-u_\infty)^2\dd x+\int_\Rdrei (W_\eps+u_\infty)(u-u_\infty)\dd x.
 \end{align*}
 It thus suffices to prove nonnegativity of the second integral term for all $u\in\prb(\Rdrei)$. First, as $u$ and $u_\infty$ have equal mass, and by the definition of $u_\infty$,
\begin{align*}
0&=\int_\Rdrei(u_\infty-u)\dd x=\int_{\{U_\eps-W_\eps> 0\}}u_\infty \dd x-\int_\Rdrei u \dd x,
\end{align*}
and consequently
\begin{align}
\int_{\{U_\eps-W_\eps> 0\}}(u-u_\infty)\dd x&=-\int_{\{U_\eps-W_\eps\le 0\}}u\dd x.\label{eq:equalmass}
\end{align}
Also, by definition of $u_\infty$,
\begin{align*}
\int_\Rdrei (W_\eps+u_\infty)(u-u_\infty)\dd x&=\int_{\{U_\eps-W_\eps> 0\}}U_\eps(u-u_\infty)\dd x+\int_{\{U_\eps-W_\eps\le 0\}}W_\eps u\dd x.
\end{align*}
Combining this with \eqref{eq:equalmass} yields
\begin{align*}
\int_{\{U_\eps-W_\eps> 0\}}U_\eps(u-u_\infty)\dd x+\int_{\{U_\eps-W_\eps\le 0\}}W_\eps u\dd x&=\int_{\{U_\eps-W_\eps\le 0\}}(W_\eps-U_\eps) u\dd x\ge 0,
\end{align*}
as the integrand is nonnegative on the domain of integration.

 \item  This is an immediate consequence of \eqref{eq:subdiff} for the $L^2$ subdifferential of $\lyp_v$.

 \item  Since $\phi$ is convex, we have
 \begin{align*}
   \phi(v)-\phi(v_\infty)-\phi'(v_\infty)[v-v_\infty]\ge0,
 \end{align*}
 and so we can estimate $\lyp_*$ from below as follows:
 \begin{align*}
   \lyp_*(u,v)
   &= \int_\Rdrei (u-u_\infty)[\phi(v)-\phi(v_\infty)]\dd x + \int_\Rdrei \big(\phi(v)-\phi(v_\infty)-\phi'(v_\infty)[v-v_\infty]\big)\dd x \\
   &\ge -\frac12\int_\Rdrei (u-u_\infty)^2\dd x - \frac{\phi'(0)^2}2\int_\Rdrei (v-v_\infty)^2\dd x \\
   &\ge -\lyp_u(u) - \frac{\phi'(0)^2}{\kappa}\lyp_v(v),
 \end{align*}
 using the properties (b) and (c) above.
 By \eqref{eq:decompose}, we conclude
 \begin{align*}
   (1-K'\eps)\lyp(u,v) = \calH(u,v)-\calH_\infty \quad \text{with} \quad K':=\max\Big(1,\frac{\phi'(0)^2}{\kappa}\Big),
 \end{align*}
 which clearly implies \eqref{eq:lypbyH} for all $\eps\in(0,\eps_0)$, possibly after diminishing $\eps_0$.
\end{enumerate}
\end{proof}

\subsection{Dissipation}
We can now formulate the main \textit{a priori} estimate for the time-discrete solution.
\begin{prop}
 Given $(\tilde u,\tilde v)\in\theX$ with $\calH(\tilde u,\tilde v)<\infty$, let $(u,v)\in\theX$ be a minimizer
 of the functional $\calH_\tau(\,\cdot\,|\tilde u,\tilde v)$ introduced in \eqref{eq:yoshida}.
 Then
 \begin{align}
   \label{eq:1}
   \lyp_u(u) + \tau\dsp_u(u,v) \le \lyp_u(\tilde u)
   \quad\text{and}\quad
   \lyp_v(v) + \tau\dsp_v(u,v) \le \lyp_v(\tilde v),
 \end{align}
 where the dissipation terms are given by
 \begin{align}
   \label{eq:du}
   \dsp_u(u,v) &= \Big(1-\frac\eps2\Big)\int_\Rdrei u|\dff(u+W_\eps)|^2\dd x
   - \frac\eps2 \int_\Rdrei u\big|\dff\big(\phi(v)-\phi(v_\infty)\big)\big|^2\dd x, \\
   \label{eq:dv}
   \dsp_v(u,v) &= \Big(1-\frac\eps2\Big)\int_\Rdrei \big(\Delta(v-v_\infty)-\kappa(v-v_\infty)\big)^2\dd x
   - \frac\eps2 \int_\Rdrei \big(u\phi'(v)-u_\infty\phi'(v_\infty)\big)^2\dd x.
 \end{align}
\end{prop}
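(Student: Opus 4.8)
The plan is to prove the two inequalities in \eqref{eq:1} independently, each via the flow interchange lemma (Lemma \ref{lem:flowinterchange}): for the first, the auxiliary gradient flow is that of $\lyp_u$ in $(\prb(\Rdrei),\W_2)$; for the second, that of $\lyp_v$ in $L^2(\Rdrei)$. Since $(u,v)$ minimizes $\calH_\tau(\,\cdot\,|\tilde u,\tilde v)$ with $\calH(\tilde u,\tilde v)<\infty$, Lemma \ref{prop:minmov} and Proposition \ref{prop:reg_min} guarantee $(u,v)\in\theX\cap(W^{1,2}(\Rdrei)\times W^{2,2}(\Rdrei))$, which is the regularity needed below; in particular the first variations $\delta\calH/\delta u=u+W+\eps\phi(v)$ and $\delta\calH/\delta v=-\Delta v+\kappa v+\eps u\phi'(v)$ are genuine $L^2$ functions.

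\emph{The first inequality.} Regard $\lyp_u$ as a functional on $(\theX,\dist)$ acting through the first component only; by Proposition \ref{prop:prop_L}(b) and the product structure of $(\theX,\dist)$ it is proper, lower semicontinuous and $0$-geodesically convex, with flow $\flow{s}{\lyp_u}(u,v)=(\aU_s,v)$, where $\aU_s$ solves $\partial_s\aU_s=\dv(\aU_s\dff(\aU_s+W_\eps))$ (since $\delta\lyp_u/\delta u=u+W_\eps$, cf.\ Remark \ref{rem:formal}). Lemma \ref{lem:flowinterchange} then yields $\lyp_u(u)+\tau\,\dff^{\lyp_u}\calH(u,v)\le\lyp_u(\tilde u)$, so it suffices to prove $\dff^{\lyp_u}\calH(u,v)\ge\dsp_u(u,v)$. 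To this end I would compute, using $W+\eps\phi(v)=W_\eps+\eps(\phi(v)-\phi(v_\infty))$ and integrating by parts,
\begin{align*}
 \frac{\dd}{\dd s}\calH(\aU_s,v)
 &= -\int_\Rdrei\aU_s|\dff(\aU_s+W_\eps)|^2\dd x - \eps\int_\Rdrei\aU_s\,\dff\big(\phi(v)-\phi(v_\infty)\big)\cdot\dff(\aU_s+W_\eps)\dd x\\
 &\le -\Big(1-\frac\eps2\Big)\int_\Rdrei\aU_s|\dff(\aU_s+W_\eps)|^2\dd x + \frac\eps2\int_\Rdrei\aU_s\big|\dff\big(\phi(v)-\phi(v_\infty)\big)\big|^2\dd x,
\end{align*}
the last step being Young's inequality (note that $\dff(\phi(v)-\phi(v_\infty))$ is frozen along $\flow{(\cdot)}{\lyp_u}$). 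Integrating over $[0,s]$, dividing by $s$ and passing to $\liminf_{s\downarrow0}$ — exactly as in the proof of Proposition \ref{prop:reg_min} — yields $\dff^{\lyp_u}\calH(u,v)\ge\dsp_u(u,v)$, hence $\lyp_u(u)+\tau\dsp_u(u,v)\le\lyp_u(\tilde u)$.

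\emph{The second inequality.} Symmetrically, view $\lyp_v$ as a functional on $(\theX,\dist)$ through its second component; it is proper, lower semicontinuous and $\kappa$-geodesically convex by Proposition \ref{prop:prop_L}(c), with flow $\flow{s}{\lyp_v}(u,v)=(u,\aV_s)$, $\partial_s\aV_s=\Delta(\aV_s-v_\infty)-\kappa(\aV_s-v_\infty)$ (as $\delta\lyp_v/\delta v=-\Delta(v-v_\infty)+\kappa(v-v_\infty)$). By Lemma \ref{lem:flowinterchange} it again suffices to show $\dff^{\lyp_v}\calH(u,v)\ge\dsp_v(u,v)$. The crucial algebraic point is to invoke the Euler--Lagrange equation \eqref{eq:v_infty}, $\Delta v_\infty-\kappa v_\infty=\eps u_\infty\phi'(v_\infty)$, to rewrite $\delta\calH/\delta v = \big(-\Delta(v-v_\infty)+\kappa(v-v_\infty)\big) + \eps\big(u\phi'(v)-u_\infty\phi'(v_\infty)\big)$; substituting this together with $\partial_s\aV_s$ into $\frac{\dd}{\dd s}\calH(u,\aV_s)=\int_\Rdrei(\delta\calH/\delta v)(u,\aV_s)\,\partial_s\aV_s\dd x$ and applying Young's inequality gives
\begin{align*}
 \frac{\dd}{\dd s}\calH(u,\aV_s)
 &\le -\Big(1-\frac\eps2\Big)\int_\Rdrei\big(\Delta(\aV_s-v_\infty)-\kappa(\aV_s-v_\infty)\big)^2\dd x\\
 &\quad + \frac\eps2\int_\Rdrei\big(u\phi'(\aV_s)-u_\infty\phi'(v_\infty)\big)^2\dd x,
\end{align*}
and the same limiting procedure delivers $\dff^{\lyp_v}\calH(u,v)\ge\dsp_v(u,v)$ and thus $\lyp_v(v)+\tau\dsp_v(u,v)\le\lyp_v(\tilde v)$.

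\emph{Main obstacle.} The genuinely delicate part is, as already in Proposition \ref{prop:reg_min}, the rigorous passage to the limit $s\downarrow0$: one needs parabolic smoothing to legitimize the chain rule and the integrations by parts for $s>0$, the extra regularity of the minimizer from Proposition \ref{prop:reg_min} to handle $s=0$, lower semicontinuity of the ``good'' dissipation functionals $u\mapsto\int_\Rdrei\aU_s|\dff(\aU_s+W_\eps)|^2\dd x$ and $v\mapsto\int_\Rdrei(\Delta(v-v_\infty)-\kappa(v-v_\infty))^2\dd x$ under narrow, resp.\ $L^2$-weak, convergence, and convergence of the coupling integrals (this is where the $W^{2,2}$-bound on $v$ and the gradient/Hessian bounds on $(u_\infty,v_\infty)$ from Proposition \ref{prop:est_infty} enter). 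Since this is precisely the argument carried out in Proposition \ref{prop:reg_min}, I would point to it rather than reproduce it.
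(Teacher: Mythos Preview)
Your overall strategy---flow interchange with the auxiliary flows of $\lyp_u$ and $\lyp_v$, then Young's inequality on the cross term---is exactly the paper's. The second inequality is handled essentially as you describe: the $\kappa$-flow of $\lyp_v$ is linear parabolic, so smoothness for $s>0$ is automatic, and the limit $s\downarrow0$ goes through by lower semicontinuity of the first integral and dominated convergence (with majorant $2\phi'(0)^2(u^2+u_\infty^2)$) for the second.

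The point where you diverge from the paper is the first inequality. You propose to use the flow of $\lyp_u$ itself, i.e.\ solutions of $\partial_s\aU_s=\dv(\aU_s\dff(\aU_s+W_\eps))$. This equation is \emph{degenerate} parabolic (quadratic porous medium with drift), so for $s>0$ you do not get the smoothness/positivity that would justify the chain rule and the integrations by parts; your reference to Proposition~\ref{prop:reg_min} does not cover this, because there the auxiliary flow is the \emph{heat flow} (strictly parabolic). The paper closes this gap by regularizing: it works with $\lyp_u^\nu:=\lyp_u+\nu\ent$, whose $\lambda_\eps$-flow solves the strictly parabolic equation $\partial_s\aU=\nu\Delta\aU+\tfrac12\Delta\aU^2+\dv(\aU\dff W_\eps)$ and hence has smooth positive solutions. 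One then computes $-\tfrac{\dn}{\dn s}\calH(\aU,v)$ rigorously, picking up an extra error $-\nu K(\|u\|_{L^2}^2+\|v\|_{W^{1,2}}^2)$, applies the flow interchange lemma to $\lyp_u^\nu$, and finally lets $\nu\downarrow0$ (using that $\ent$ is finite on $(\prb\cap L^2)(\Rdrei)$). For the lower semicontinuity of $\int_\Rdrei u|\dff(u+W_\eps)|^2\dd x$ in the $s\downarrow0$ limit, the paper rewrites it via integration by parts as $\tfrac49\int|\dff u^{3/2}|^2-\int u^2\Delta W_\eps+\int u|\dff W_\eps|^2$, where lower semicontinuity is manifest since $\Delta W_\eps\in L^\infty$.

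In short: your argument for $\lyp_v$ matches the paper; for $\lyp_u$ you are missing the $\nu$-regularization that makes the formal computation rigorous.
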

\begin{proof}
 Naturally, these estimates are derived by means of the flow interchange lemma \ref{lem:flowinterchange}.

 For given $\nu>0$, introduce the regularized functional $\lyp_u^\nu=\lyp_u+\nu\ent$,
 where
 \begin{align*}
   \ent(u) = \int_\Rdrei u\log u\dd x.
 \end{align*}
 Note that $\ent$ is finite on $(\prb\cap L^2)(\Rdrei)$, see e.g.\ \cite[Lemma 5.3]{zinsl2012}.
 Moreover, $\lyp_u^\nu$ is $\lambda_\eps$-convex in $\W_2$ by Theorem \ref{thm:crit_conv}.
 We claim that the $\lambda_\eps$-flow associated to $\lyp_u^\nu$ satisfies the evolution equation
 \begin{align}
   \label{eq:nuflow}
   \partial_s\aU = \nu\Delta\aU + \frac12\Delta\aU^2 + \dv(\aU\dff W_\eps).
 \end{align}
 Since $\nu>0$, this equation is strictly parabolic.
 Therefore, for every initial condition $\aU_0\in(\prb\cap L^2)(\Rdrei)$,
 there exists a smooth and positive solution $\aU:\R_+\times\Rdrei\to\R$ such that $\aU(s,\cdot)\to\aU_0$ both in $\W_2$ and in $L^2(\Rdrei)$ as $s\downarrow0$.
 By \cite[Theorem 11.2.8]{savare2008}, the solution operator to \eqref{eq:nuflow} can be identified with the $\lambda_\eps$-flow of $\lyp_u^\nu$.

 Now, let $\aU$ be the smooth solution to \eqref{eq:nuflow} with initial condition $\aU_0=u$.
 By smoothness of $\aU$, the equation \eqref{eq:nuflow} is satisfied in the classical sense at every time $s>0$,
 and the following integration by parts is justified:
 \begin{align*}
   -\frac{\dn}{\dn s}\calH(\aU,v)
   &= -\int_\Rdrei \big[\aU+W_\eps+\eps(\phi(v)-\phi(v_\infty))\big]\dv\big[\aU\dff(\aU+W_\eps)+\nu\dff\aU\big]\dd x \\
   &= \int_\Rdrei \aU|\dff(\aU+W_\eps)|^2\dd x + \eps\int_\Rdrei\aU\dff(\phi(v)-\phi(v_\infty))\cdot\dff(\aU+W_\eps)\dd x\\
   & \qquad + \nu \int_\Rdrei \dff\big[\aU+W+\eps\phi(v)\big]\cdot\dff\aU\dd x.
 \end{align*}
 The very last integral has already been estimated in the proof of Proposition \ref{prop:reg_min}.
 Rewriting the middle integral by means of the elementary inequality
 \begin{align}
   \label{eq:binom}
   2ab \le  a^2 + b^2,
 \end{align}
 we arrive at
 \begin{align*}
   -\frac{\dn}{\dn s}\calH(\aU,v)
   &\ge \Big(1-\frac{\eps}{2}\Big)\int_\Rdrei \aU|\dff(\aU+W_\eps)|^2\dd x - \frac\eps2 \int_\Rdrei \aU\big|\dff\big(\phi(v)-\phi(v_\infty)\big)\big|^2\dd x \\
   & \qquad - \nu K\big(\|\aU\|_{L^2(\Rdrei)}^2 + \|v\|_{W^{1,2}(\Rdrei)}^2\big).
 \end{align*}
 We pass to the limit $s\downarrow0$.
 Recall that $\aU$ converges (strongly) to its initial datum $\aU_0=u$ in $L^2(\Rdrei)$,
 and observe that the expressions on the right-hand side are lower semicontinuous with respect to that convergence.
 In fact, this is clear except perhaps for the first integral, which however can be rewritten, using integration by parts,
 in the form
 \begin{align*}
   \int_\Rdrei \aU|\dff(\aU+W_\eps)|^2\dd x
   = \frac49 \int_\Rdrei |\dff\aU^{3/2}|^2\dd x - \int_\Rdrei \aU^2\Delta W_\eps\dd x + \int_\Rdrei \aU|\nabla W_\eps|^2\dd x,
 \end{align*}
 in which the lower semicontinuity is obvious since $\Delta W_\eps\in L^\infty(\Rdrei)$.
 Applying now Lemma \ref{lem:flowinterchange},
 we arrive at
 \begin{align*}
   \lyp_u^\nu(u) + (1-\eps)\int_\Rdrei u|\dff(u+W_\eps)|^2\dd x
   & - \frac\eps2 \int_\Rdrei u\big|\dff\big(\phi(v)-\phi(v_\infty)\big)\big|^2\dd x \\
   & \qquad \le \lyp_u^\nu(\tilde u) + \nu K\big(\|u\|_{L^2(\Rdrei)}^2 + \|u\|_{W^{1,2}(\Rdrei)}^2\big).
 \end{align*}
 Finally, passage to the limit $\nu\downarrow0$ yields the dissipation \eqref{eq:du}.

The dissipation \eqref{eq:dv} is easier to obtain.
 It is immediate that the $\kappa$-flow in $L^2(\Rdrei)$ of $\lyp_v$ satisfies the linear parabolic evolution equation
 \begin{align}
   \label{eq:xflow}
   \partial_s\aV = \Delta(\aV-v_\infty) - \kappa(\aV-v_\infty).
 \end{align}
 Solutions $\aV$ to \eqref{eq:xflow} exist for arbitrary initial conditions $\aV_0\in L^2(\Rdrei)$,
 and they have at least the spatial regularity of $v_\infty$.
 Hence, with $\aV_0:=v$, we have, also recalling the defining equation \eqref{eq:v_infty} for $v_\infty$,
 \begin{align*}
   -\frac{\dn}{\dn s} \calH(u,\aV)
   & = \int_\Rdrei \big[\Delta(\aV-v_\infty) - \kappa(\aV-v_\infty) - \eps(u\phi'(\aV)-u_\infty\phi'(v_\infty))\big]\\
   & \qquad \cdot\big[\Delta(\aV-v_\infty) - \kappa(\aV-v_\infty)\big] \dd x\\
\end{align*}
 Another application of the elementary inequality \eqref{eq:binom} yields
 \begin{align*}
   -\frac{\dn}{\dn s} \calH(u,\aV)
   \ge \Big(1-\frac\eps2\Big)\int_\Rdrei \big[\Delta(\aV-v_\infty) - \kappa(\aV-v_\infty)\big]^2 \dd x
   - \frac\eps2 \int_\Rdrei (u\phi'(\aV)-u_\infty\phi'(v_\infty))^2\dd x.
 \end{align*}
 We pass to the limit $s\downarrow0$, so that $\aV$ converges to $v$ in $L^2(\Rdrei)$.
 The first integral is obviously lower semicontinuous.
 Concerning the second integral, note that the integrand converges pointwise a.e.\ on $\Rdrei$ on a subsequence,
 and that it is pointwise a.e.\ bounded by the integrable function $2\phi'(0)^2(u^2+u_\infty^2)$.
 Hence, we can pass to the limit using the dominated convergence theorem.
 Now another application of Lemma \ref{lem:flowinterchange} yields the desired result.
\end{proof}

We will need below two further estimates for the dissipation terms from \eqref{eq:du}\&\eqref{eq:dv}.
\begin{lemma}
 There is a constant $\theta>0$ such that for every $\eps\in(0,\eps_0)$ and every $u\in(\prb\cap W^{1,2})(\Rdrei)$,
 the following holds:
 \begin{align}
   \label{eq:u3}
   \|u\|_{L^3(\Rdrei)}^4 \le \theta\bigg(1+\int_\Rdrei u|\dff(u+W_\eps)|^2\dd x \bigg).
 \end{align}
\end{lemma}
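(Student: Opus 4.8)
The plan is to bound $\|u\|_{L^3(\Rdrei)}^4$ by the Fisher-type quantity $\int_\Rdrei u|\dff(u+W_\eps)|^2\dd x$ via the chain ``Sobolev embedding plus interpolation'' applied to $u^{3/2}$, and then to reabsorb the lower-order error terms. If $\int_\Rdrei u|\dff(u+W_\eps)|^2\dd x=+\infty$ there is nothing to prove, so I assume it is finite. Since $\dff W_\eps$ grows at most linearly (by \eqref{eq:perturbed_pot}, the quadratic growth of $W$, and the bound $|\dff v_\infty|\le C\eps$ from Proposition \ref{prop:est_infty}) and $\m_2(u)<\infty$, one gets $\int_\Rdrei u|\dff W_\eps|^2\dd x<\infty$, hence also $\int_\Rdrei u|\dff u|^2\dd x<\infty$; thus $u^{3/2}\in W^{1,2}(\Rdrei)$ with weak gradient $\tfrac32u^{1/2}\dff u$ (justified by a routine truncation argument), and $\|\dff u^{3/2}\|_{L^2(\Rdrei)}^2=\tfrac94\int_\Rdrei u|\dff u|^2\dd x$.

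The first step is the interpolation estimate. The Sobolev embedding $W^{1,2}(\Rdrei)\hookrightarrow L^6(\Rdrei)$ applied to $u^{3/2}$ gives $\|u\|_{L^9(\Rdrei)}^3=\|u^{3/2}\|_{L^6(\Rdrei)}^2\le C_S^2\|\dff u^{3/2}\|_{L^2(\Rdrei)}^2=\tfrac94C_S^2\int_\Rdrei u|\dff u|^2\dd x$, while the interpolation inequality $\|u\|_{L^3}\le\|u\|_{L^1}^{1/4}\|u\|_{L^9}^{3/4}$ together with $\|u\|_{L^1(\Rdrei)}=1$ yields $\|u\|_{L^3(\Rdrei)}^4\le\|u\|_{L^9(\Rdrei)}^3$. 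Hence $\|u\|_{L^3(\Rdrei)}^4\le\tfrac94C_S^2\int_\Rdrei u|\dff u|^2\dd x$.

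The second step controls $\int_\Rdrei u|\dff u|^2\dd x$ by the prescribed quantity. Expanding the square, $u|\dff(u+W_\eps)|^2=u|\dff u|^2+\dff(u^2)\cdot\dff W_\eps+u|\dff W_\eps|^2$. Integrating and shifting one derivative off $u^2$ — via a cutoff $\chi_R$, the boundary term $\int_\Rdrei u^2\,\dff\chi_R\cdot\dff W_\eps\dd x$ vanishing as $R\to\infty$ because $|\dff\chi_R|\lesssim R^{-1}$ and $|\dff W_\eps|\lesssim R$ on $\{R\le|x|\le2R\}$ while $u^2\in L^1(\Rdrei)$ — gives $\int_\Rdrei\dff(u^2)\cdot\dff W_\eps\dd x=-\int_\Rdrei u^2\Delta W_\eps\dd x$; dropping the nonnegative term $\int_\Rdrei u|\dff W_\eps|^2\dd x$ leaves $\int_\Rdrei u|\dff u|^2\dd x\le\int_\Rdrei u|\dff(u+W_\eps)|^2\dd x+\|\Delta W_\eps\|_{L^\infty(\Rdrei)}\|u\|_{L^2(\Rdrei)}^2$. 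Here $\|\Delta W_\eps\|_{L^\infty(\Rdrei)}$ is a system constant for $\eps\in(0,\eps_0)$, since $\Delta W$ is bounded by hypothesis and $\Delta(\eps\phi(v_\infty))=\eps\phi''(v_\infty)|\dff v_\infty|^2+\eps\phi'(v_\infty)\Delta v_\infty$ is $O(\eps^2)$ by \eqref{eq:assumpphi} and Proposition \ref{prop:est_infty}.

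Finally I would close the estimate. By Hölder and $\|u\|_{L^1(\Rdrei)}=1$, $\|u\|_{L^2(\Rdrei)}^2\le\|u\|_{L^3(\Rdrei)}^{3/2}$, and since $3/2<4$, Young's inequality absorbs $\tfrac94C_S^2\|\Delta W_\eps\|_{L^\infty}\|u\|_{L^3(\Rdrei)}^{3/2}$ into $\tfrac12\|u\|_{L^3(\Rdrei)}^4$ plus a constant depending only on $C_S$ and $\|\Delta W_\eps\|_{L^\infty}$; reabsorbing gives $\|u\|_{L^3(\Rdrei)}^4\le\theta\big(1+\int_\Rdrei u|\dff(u+W_\eps)|^2\dd x\big)$ with $\theta$ a system constant, which is \eqref{eq:u3}. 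The only mildly delicate point is the integration by parts against the unbounded potential gradient $\dff W_\eps$; everything else is standard Sobolev/interpolation bookkeeping, organized so that no quantity other than $\|u\|_{L^1}=1$, $\|\Delta W_\eps\|_{L^\infty}$ and the stated dissipation term survives.
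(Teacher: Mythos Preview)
Your argument is correct and follows the same skeleton as the paper: the identity
\[
\frac49\int_\Rdrei|\dff u^{3/2}|^2\dd x + \int_\Rdrei u|\dff W_\eps|^2\dd x
= \int_\Rdrei u|\dff(u+W_\eps)|^2\dd x + \int_\Rdrei u^2\Delta W_\eps\dd x
\]
combined with the uniform bound on $\Delta W_\eps$ and the Sobolev/interpolation step $\|u\|_{L^3}\le\|u^{3/2}\|_{L^6}^{1/2}\|u\|_{L^1}^{1/4}\le C\|\dff u^{3/2}\|_{L^2}^{1/2}$. The one genuine difference is how you dispose of the error term $\|\Delta W_\eps\|_{L^\infty}\|u\|_{L^2}^2$: the paper invokes the convexity inequality \eqref{eq:subdiff_u} to bound $\|u\|_{L^2}^2$ directly by $\|u_\infty\|_{L^2}^2+\lambda_\eps^{-1}\int u|\dff(u+W_\eps)|^2$, whereas you interpolate $\|u\|_{L^2}^2\le\|u\|_{L^3}^{3/2}$ and absorb via Young. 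Your route is slightly more self-contained (it does not rely on the variational characterisation of $u_\infty$), at the cost of an extra absorption step; the paper's route is shorter but ties the lemma to Proposition~\ref{prop:prop_L}. Your added care in justifying the integration by parts against the linearly growing $\dff W_\eps$ is also a point the paper glosses over.
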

\begin{proof}
 Integrating by parts, it is easily seen that
 \begin{align*}
   \frac49\int_\Rdrei \big|\dff u^{3/2}\big|^2\dd x + \int_\Rdrei u|\dff W_\eps|^2\dd x
   = \int_\Rdrei u|\dff(u+W_\eps)|^2\dd x + \int_\Rdrei u^2\Delta W_\eps\dd x.
 \end{align*}
 By Proposition \ref{prop:est_infty} on the regularity of $u_\infty$ and $v_\infty$,
 there exists a constant $C$ such that
 \begin{align*}
   \Delta W_\eps=\Delta W + \eps\phi'(v_\infty)\Delta v_\infty + \eps\phi''(v_\infty)|\dff v_\infty|^2 \le C \quad \text{on $\Rdrei$}
 \end{align*}
 for all $\eps\in(0,\eps_1)$.
 Moreover,
 \begin{align*}
   \frac12\int_\Rdrei u^2\dd x\le \int_\Rdrei u_\infty^2\dd x + \frac1{\lambda_\eps}\int_\Rdrei u|\dff(u+W_\eps)|^2\dd x
 \end{align*}
 by \eqref{eq:subdiff_u}.
 Invoking again Proposition \ref{prop:est_infty}, it follows that there exists an $\eps$-uniform constant $C'$
 such that
 \begin{align*}
   \|\dff u^{3/2}\|_{L^2(\Rdrei)}^2 \le C'\bigg(1+\int_\Rdrei u|\dff(u+W_\eps)|^2\dd x\bigg)
 \end{align*}
 holds for all $u\in\prb(\Rdrei)$.
 On the other hand, H\"older's and Sobolev's inequalities provide
 \begin{align*}
   \|u\|_{L^3(\Rdrei)} &\le \|u^{3/2}\|_{L^6(\Rdrei)}^{1/2}\|u\|_{L^1(\Rdrei)}^{1/4} \le C''\|\dff u^{3/2}\|_{L^2(\Rdrei)}^{1/2},
 \end{align*}
 where we have used that $u$ is of unit mass.
 Together, this yields \eqref{eq:u3}.
\end{proof}
\begin{lemma}
 For every $v\in W^{2,2}(\Rdrei)$,
 \begin{align}
   \label{eq:w22}
   \min(1,2\kappa,\kappa^2)\|v-v_\infty\|_{W^{2,2}(\Rdrei)}^2 \le \int_\Rdrei \big(\Delta(v-v_\infty)-\kappa(v-v_\infty)\big)^2\dd x.
 \end{align}
\end{lemma}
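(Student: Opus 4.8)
The plan is to set $w := v - v_\infty \in W^{2,2}(\Rdrei)$ (recall $v_\infty\in W^{2,2}(\Rdrei)$ by Proposition~\ref{prop:ex_infty}) and simply expand the square on the right-hand side. Formally,
\begin{align*}
 \int_\Rdrei (\Delta w - \kappa w)^2\dd x = \int_\Rdrei (\Delta w)^2\dd x - 2\kappa\int_\Rdrei w\,\Delta w\dd x + \kappa^2\int_\Rdrei w^2\dd x.
\end{align*}
The first step is to justify the two integration-by-parts identities
\begin{align*}
 \int_\Rdrei (\Delta w)^2\dd x = \int_\Rdrei |\dff^2 w|^2\dd x, \qquad -\int_\Rdrei w\,\Delta w\dd x = \int_\Rdrei |\dff w|^2\dd x,
\end{align*}
valid for $w\in W^{2,2}(\Rdrei)$; both follow by approximating $w$ with functions in $C_c^\infty(\Rdrei)$ (dense in $W^{2,2}(\Rdrei)$) and passing to the limit, using for the first identity that $\sum_{i,j}\int \partial_{ii}w\,\partial_{jj}w = \sum_{i,j}\int \partial_{ij}w\,\partial_{ij}w$ after two integrations by parts (alternatively, a one-line Fourier-transform argument: $\widehat{\Delta w}=-|\xi|^2\hat w$ and $\widehat{\partial_{ij}w}=-\xi_i\xi_j\hat w$, so both sides equal $\int |\xi|^4|\hat w|^2\dd\xi$).

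Combining these gives the exact identity
\begin{align*}
 \int_\Rdrei (\Delta w - \kappa w)^2\dd x = \|\dff^2 w\|_{L^2(\Rdrei)}^2 + 2\kappa\|\dff w\|_{L^2(\Rdrei)}^2 + \kappa^2\|w\|_{L^2(\Rdrei)}^2.
\end{align*}
Since each of the three terms on the right is nonnegative, I then bound each coefficient from below by $\min(1,2\kappa,\kappa^2)$, obtaining
\begin{align*}
 \int_\Rdrei (\Delta w - \kappa w)^2\dd x \ge \min(1,2\kappa,\kappa^2)\big(\|\dff^2 w\|_{L^2(\Rdrei)}^2 + \|\dff w\|_{L^2(\Rdrei)}^2 + \|w\|_{L^2(\Rdrei)}^2\big) = \min(1,2\kappa,\kappa^2)\|w\|_{W^{2,2}(\Rdrei)}^2,
\end{align*}
which is exactly \eqref{eq:w22} after substituting back $w = v-v_\infty$.

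There is essentially no obstacle here; the only point requiring a word of care is the validity of the integration-by-parts identities on the full space $\Rdrei$ rather than on a bounded domain, which is why I would invoke density of $C_c^\infty(\Rdrei)$ in $W^{2,2}(\Rdrei)$ (equivalently, the Fourier-side computation) rather than attempting to integrate by parts directly on $\Rdrei$ with boundary terms.
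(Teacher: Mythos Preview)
Your proof is correct and follows exactly the same approach as the paper: set $\hat v=v-v_\infty$, expand $(\Delta\hat v-\kappa\hat v)^2$ and use integration by parts to obtain $\|\dff^2\hat v\|_{L^2}^2+2\kappa\|\dff\hat v\|_{L^2}^2+\kappa^2\|\hat v\|_{L^2}^2$, then bound the coefficients from below by their minimum. Your additional remark on justifying the integration by parts via density of $C_c^\infty(\Rdrei)$ (or Fourier transform) is a nice touch that the paper leaves implicit.
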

\begin{proof}
 Set $\bv:=v-v_\infty$ for brevity.
 Integration by parts yields
 \begin{align*}
   \int_\Rdrei (\Delta\bv-\kappa\bv)^2\dd x
   &= \int_\Rdrei (\Delta\bv)^2\dd x - 2\kappa \int_\Rdrei \bv\Delta\bv\dd x + \kappa^2\int_\Rdrei\bv^2\dd x \\
   &= \int_\Rdrei \|\dff^2\bv\|^2\dd x + 2\kappa\int_\Rdrei |\dff\bv|^2\dd x + \kappa^2\int_\Rdrei\bv^2\dd x,
 \end{align*}
 which clearly implies \eqref{eq:w22}.
\end{proof}

\subsection{Control on the $v$ component}
For our estimates below, we need some preliminaries concerning solutions to the time-discrete heat equation. Here, we use the iterates $\yucca_\sigma^k$ defined in \eqref{eq:yuccaiter} to write a semi-explicit representation of the components $v_\tau^n$ for a particular choice of $\sigma$.
\begin{lemma}
 For every $n\in\N$,
 \begin{align}
   \label{eq:vrep}
   v_\tau^n = (1+\kappa\tau)^{-n}\yucca_\sigma^n * v_0 + \tau\sum_{m=1}^n(1+\kappa\tau)^{-m}\yucca_\sigma^m * f_\tau^{n+1-m},
 \end{align}
 where we have set
 \begin{align*}
   f_\tau^k := -\eps u_\tau^k\phi'(v_\tau^k), \quad \sigma:=\frac\tau{1+\kappa\tau}.
 \end{align*}
\end{lemma}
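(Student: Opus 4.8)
The plan is to extract from the minimality of $(u_\tau^n,v_\tau^n)$ the Euler--Lagrange equation in the $v$-component, rewrite it as a resolvent equation for the screened Laplacian, identify its solution with a convolution against $\yucca_\sigma$ by Lemma~\ref{lemma:reg_yuk}(a), and then unfold the resulting one-step recursion by induction on $n$.

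First I would perturb the minimizer in the second component only. Since the minimization in \eqref{eq:jko} imposes no constraint there, for any $\gamma\in C_c^\infty(\Rdrei)$ the scalar function $s\mapsto \frac1{2\tau}\|v_\tau^n+s\gamma-v_\tau^{n-1}\|_{L^2(\Rdrei)}^2+\calH(u_\tau^n,v_\tau^n+s\gamma)$ is differentiable (using $\phi\in C^2$ with bounded derivatives and $u_\tau^n\in L^2(\Rdrei)$) and has a minimum at $s=0$, while the Wasserstein part of $\dist$ is untouched. Setting its derivative to zero and recalling \eqref{eq:H},
\begin{align*}
 \frac1\tau\int_\Rdrei (v_\tau^n-v_\tau^{n-1})\gamma\dd x = \int_\Rdrei\big(-\dff v_\tau^n\cdot\dff\gamma-\kappa v_\tau^n\gamma-\eps u_\tau^n\phi'(v_\tau^n)\gamma\big)\dd x.
\end{align*}
By Propositions~\ref{prop:reg_min} and \ref{prop:apriori} we have $v_\tau^n\in W^{2,2}(\Rdrei)$ and $f_\tau^n:=-\eps u_\tau^n\phi'(v_\tau^n)\in L^2(\Rdrei)$, so this is the weak form of the pointwise a.e.\ identity $\tfrac1\tau(v_\tau^n-v_\tau^{n-1})=\Delta v_\tau^n-\kappa v_\tau^n+f_\tau^n$. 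Rearranging and dividing by $1+\kappa\tau$ gives the resolvent equation
\begin{align*}
 v_\tau^n-\sigma\Delta v_\tau^n=\frac1{1+\kappa\tau}\big(v_\tau^{n-1}+\tau f_\tau^n\big),\qquad \sigma=\frac\tau{1+\kappa\tau}.
\end{align*}

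Next, by Lemma~\ref{lemma:reg_yuk}(a), $\yucca_\sigma$ is the fundamental solution of $-\sigma\Delta h+h=g$; since the right-hand side above lies in $L^2(\Rdrei)$, $\yucca_\sigma\in L^1(\Rdrei)$ (in fact $\|\yucca_\sigma\|_{L^1(\Rdrei)}=1$, so all convolutions below are well defined by Young's inequality), and the map $h\mapsto h-\sigma\Delta h$ is injective on $W^{2,2}(\Rdrei)$ (testing against $h$ yields $\int_\Rdrei(h^2+\sigma|\dff h|^2)\dd x=0$), the $W^{2,2}$-solution is unique and equals the convolution of $\yucca_\sigma$ with the right-hand side. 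Writing $a:=(1+\kappa\tau)^{-1}$, this reads $v_\tau^n=a\,\yucca_\sigma*v_\tau^{n-1}+a\tau\,\yucca_\sigma*f_\tau^n$. Iterating this one-step recursion, one proves by induction on $n$ — using associativity of convolution and $\yucca_\sigma^{m+1}=\yucca_\sigma*\yucca_\sigma^m$ — that
\begin{align*}
 v_\tau^n=a^n\,\yucca_\sigma^n*v_\tau^0+\tau\sum_{m=1}^n a^m\,\yucca_\sigma^m*f_\tau^{n+1-m},
\end{align*}
and inserting $a=(1+\kappa\tau)^{-1}$ together with $v_\tau^0=v_0$ gives precisely \eqref{eq:vrep}.

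I do not anticipate a genuine difficulty here. The two points requiring a little care are the passage from the first variation to the strong-form equation — this is exactly where the $W^{2,2}$-regularity from Proposition~\ref{prop:reg_min} enters, so that $\Delta v_\tau^n$ is a well-defined $L^2$ function and the identification with the convolution is legitimate — and the index bookkeeping of the term $f_\tau^{n+1-m}$ in the inductive step. The uniqueness argument used to identify $v_\tau^n$ with the Yukawa convolution is elementary, and one even obtains $v_\tau^n\ge0$ as a byproduct since $\yucca_\sigma\ge0$, $v_0\ge0$ and, inductively, $f_\tau^k\ge0$.
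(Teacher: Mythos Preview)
Your proposal is correct and follows essentially the same route as the paper: derive the one-step resolvent equation for $v_\tau^n$, invert it via convolution with $\yucca_\sigma$, and then induct on $n$. The only minor difference is that you obtain the Euler--Lagrange equation by a direct first-variation argument in the $v$-direction, whereas the paper invokes the flow interchange lemma (Lemma~\ref{lem:flowinterchange}) with the linear auxiliary functional $\auxil(u,v)=\int_\Rdrei \gamma v\,\dd x$; your approach is more elementary here and entirely adequate.
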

\begin{proof}
 We proceed by induction on $n$.
 By the flow interchange lemma \ref{lem:flowinterchange}, using the auxiliary functional $\auxil(u,v):=\int_\Rdrei \gamma v \dd x$ for an arbitrary test function $\gamma\in C^\infty_c(\Rdrei)$, one sees by analogous (but easier) arguments as in the proof of \eqref{eq:dv} that $v_\tau^n$ is the -- unique in $L^2(\Rdrei)$ -- distributional solution
 to
 \begin{align*}
   v_\tau^n - \sigma \Delta v_\tau^n = (1+\kappa\tau)^{-1}v_\tau^{n-1} + \tau(1+\kappa\tau)^{-1}f_\tau^n.
 \end{align*}
 Hence it can be written as
 \begin{align*}
   v_\tau^n = (1+\kappa\tau)^{-1}\yucca_\sigma * v_\tau^{n-1} + \tau(1+\kappa\tau)^{-1}\yucca_\sigma * f_\tau^n.
 \end{align*}
 For $n=1$, this is \eqref{eq:vrep} because $v_\tau^0 = v_0$.
 Now, if $n>1$, and \eqref{eq:vrep} holds with $n-1$ in place of $n$,
 then
 \begin{align*}
   v_\tau^n & = (1+\kappa\tau)^{-n}\yucca_\sigma *(\yucca_\sigma^{n-1} * v_0) \\
   & \qquad + \tau\sum_{m=1}^{n-1}(1+\kappa\tau)^{-(m+1)}\yucca_\sigma *(\yucca_\sigma^m * f_\tau^{n-m})
   + \tau(1+\kappa\tau)^{-1}\yucca_\sigma * f_\tau^n.
 \end{align*}
 Using that $\yucca_\sigma *(\yucca_\sigma^k * f)=\yucca_\sigma^{k+1} * f$, we obtain \eqref{eq:vrep}.
\end{proof}
We are now able to prove the main result of this section.
\begin{prop}
 Provided that $v_0\in L^{6/5}(\Rdrei)$,
 then $\dff v_\tau^n\in L^{6/5}(\Rdrei)$ for every $n\in\N$,
 and the following estimate holds:
 \begin{align}
   \label{eq:time1}
   \|\dff v_\tau^n\|_{L^{6/5}(\Rdrei)} \le a\|v_0\|_{L^{6/5}(\Rdrei)}e^{-[\kappa]_\tau n\tau}(n\tau)^{-1/2} + \eps M_1,
 \end{align}
 with the system constants
 \begin{align}
   \label{eq:M1}
   a:=(1+\kappa)Y_1, \quad \text{and} \quad
   M_1:=|\phi'(0)|Y_{6/5}(1+\kappa)^{3/4}\int_0^\infty(1+\kappa)^{-s} s^{-3/4}\dd s.
 \end{align}
\end{prop}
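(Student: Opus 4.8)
The plan is to differentiate the semi-explicit representation \eqref{eq:vrep} and estimate the two resulting contributions separately, via Young's convolution inequality together with the decay bounds for $\dff\yucca_\sigma^k$ from Lemma \ref{lemma:reg_yuk}(c). Applying $\dff$ to \eqref{eq:vrep} gives
\begin{align*}
  \dff v_\tau^n = (1+\kappa\tau)^{-n}(\dff\yucca_\sigma^n) * v_0 + \tau\sum_{m=1}^n(1+\kappa\tau)^{-m}(\dff\yucca_\sigma^m) * f_\tau^{n+1-m}.
\end{align*}
Since $6/5\in[1,3/2)$, Lemma \ref{lemma:reg_yuk}(c) gives $\dff\yucca_\sigma^m\in L^1(\Rdrei)\cap L^{6/5}(\Rdrei)$; moreover $f_\tau^k=-\eps u_\tau^k\phi'(v_\tau^k)$ obeys $\|f_\tau^k\|_{L^1(\Rdrei)}\le\eps|\phi'(0)|$, because $u_\tau^k$ is a probability density and $0<-\phi'\le-\phi'(0)$. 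Hence each convolution above belongs to $L^{6/5}(\Rdrei)$, which both establishes the claimed regularity $\dff v_\tau^n\in L^{6/5}(\Rdrei)$ and legitimises the subsequent norm estimates.

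For the first term I use $\|(\dff\yucca_\sigma^n)*v_0\|_{L^{6/5}(\Rdrei)}\le\|\dff\yucca_\sigma^n\|_{L^1(\Rdrei)}\|v_0\|_{L^{6/5}(\Rdrei)}\le Y_1(\sigma n)^{-1/2}\|v_0\|_{L^{6/5}(\Rdrei)}$ (Lemma \ref{lemma:reg_yuk}(c) with $q=1$, $Q=1/2$), together with the identities $(\sigma n)^{-1/2}=(n\tau)^{-1/2}(1+\kappa\tau)^{1/2}$ and $(1+\kappa\tau)^{-n}=e^{-[\kappa]_\tau n\tau}$; bounding $(1+\kappa\tau)^{1/2}\le 1+\kappa$ for $\tau\le 1$ produces precisely the summand $a\|v_0\|_{L^{6/5}(\Rdrei)}e^{-[\kappa]_\tau n\tau}(n\tau)^{-1/2}$. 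For the second term, Young's inequality combined with Lemma \ref{lemma:reg_yuk}(c) for $q=6/5$, $Q=3/4$ and the bound $\|f_\tau^k\|_{L^1(\Rdrei)}\le\eps|\phi'(0)|$ bounds the $L^{6/5}(\Rdrei)$-norm of the sum by $\eps|\phi'(0)|Y_{6/5}(1+\kappa\tau)^{3/4}\sum_{m=1}^n\tau\,e^{-[\kappa]_\tau m\tau}(m\tau)^{-3/4}$, where I used $(\sigma m)^{-3/4}=(m\tau)^{-3/4}(1+\kappa\tau)^{3/4}$ and $(1+\kappa\tau)^{-m}=e^{-[\kappa]_\tau m\tau}$.

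It remains to dominate this last sum by the integral appearing in \eqref{eq:M1}. The function $g(s):=e^{-[\kappa]_\tau s}s^{-3/4}$ is strictly decreasing on $(0,\infty)$, since $\frac{\dn}{\dn s}\log g(s)=-[\kappa]_\tau-\frac{3}{4s}<0$; hence $\tau\,g(m\tau)\le\int_{(m-1)\tau}^{m\tau}g(s)\,\dd s$, and summing over $m=1,\dots,n$ gives $\sum_{m=1}^n\tau\,g(m\tau)\le\int_0^\infty e^{-[\kappa]_\tau s}s^{-3/4}\,\dd s$. Finally, $[\kappa]_\tau=\tfrac1\tau\log(1+\kappa\tau)$ is decreasing in $\tau$ with $[\kappa]_1=\log(1+\kappa)$, so $[\kappa]_\tau\ge\log(1+\kappa)$ for all $\tau\le 1$, which yields $e^{-[\kappa]_\tau s}\le(1+\kappa)^{-s}$; combined with $(1+\kappa\tau)^{3/4}\le(1+\kappa)^{3/4}$ this bounds the second term by $\eps M_1$. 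Adding the two estimates gives \eqref{eq:time1}. I expect the only mildly delicate point to be this monotone Riemann-sum comparison, together with the observation that $[\kappa]_\tau\ge\log(1+\kappa)$ for small $\tau$; the remainder is a routine application of Young's inequality and Lemma \ref{lemma:reg_yuk}.
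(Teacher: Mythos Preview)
Your proof is correct and follows essentially the same route as the paper's: differentiate the representation \eqref{eq:vrep}, apply Young's inequality with the $L^q$-bounds from Lemma \ref{lemma:reg_yuk}(c) (with $q=1$ for the $v_0$-term and $q=6/5$ for the sum), and then dominate the resulting Riemann sum by the integral in \eqref{eq:M1} via monotonicity of $s\mapsto e^{-[\kappa]_\tau s}s^{-3/4}$ together with $[\kappa]_\tau\ge\log(1+\kappa)$ for $\tau\le1$. If anything, your justification of the sum-to-integral comparison and of the bound $[\kappa]_\tau\ge\log(1+\kappa)$ is more explicit than the paper's.
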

\begin{proof}
 From the representation formula \eqref{eq:vrep} it follows that
 \begin{align*}
   \|\dff v_\tau^n\|_{L^{6/5}(\Rdrei)} &\le (1+\kappa\tau)^{-n}\|\dff\yucca_\sigma^n\|_{L^1(\Rdrei)}\|v_0\|_{L^{6/5}(\Rdrei)}\\
   &+\tau \sum_{m=1}^n (1+\kappa\tau)^{-m}\|\dff\yucca_\sigma^m\|_{L^{6/5}(\Rdrei)}\|f_\tau^{n+1-m}\|_{L^1(\Rdrei)}.
 \end{align*}
 Now apply estimate \eqref{eq:yuccaest}, once with $q:=1$ and $Q:=1/2$ to the first term,
 and once with $q:=6/5$ and $Q:=3/4$ to the second term on the right-hand side.
 Further, since $u_\tau^n$ is of unit mass, one has
 \begin{align*}
   \|f_\tau^k\|_{L^1(\Rdrei)} = \eps\|u_\tau^k\phi'(v_\tau^k)\|_{L^1(\Rdrei)} \le \eps|\phi'(0)|.
 \end{align*}
 This yields
 \begin{align}
   \label{eq:semigroup}
   \|\dff v_\tau^n\|_{L^{6/5}(\Rdrei)} \le Y_1\|v_0\|_{L^{6/5}(\Rdrei)}(1+\kappa\tau)^{-n}(\sigma n)^{-1/2}
   + \eps|\phi'(0)|Y_{6/5}\,\tau\sum_{m=1}^n (1+\kappa\tau)^{-m}(\sigma m)^{-3/4}.
 \end{align}
 The sum in \eqref{eq:semigroup} is bounded uniformly in $n$ and $\tau$ because
 \begin{align*}
   \tau\sum_{m=1}^\infty (1+\kappa\tau)^{-m}(\sigma m)^{-3/4}
   &\le (1+\kappa\tau)^{3/4}\int_0^\infty e^{-[\kappa]_\tau t}t^{-3/4}\dd t
 \end{align*}
Without loss of generality, we assume that $\tau\le 1$. By the monotone convergence $e^{-[\kappa]_\tau t}\downarrow e^{-\kappa t}$ as $\tau\downarrow 0$, we can estimate the sum in \eqref{eq:semigroup} as
\begin{align*}
   \tau\sum_{m=1}^\infty (1+\kappa\tau)^{-m}(\sigma m)^{-3/4}
   &\le (1+\tau)^{3/4}\int_0^\infty(1+\kappa)^{-t} t^{-3/4}\dd t,
 \end{align*}
and the r.h.s. is finite.
 Thus \eqref{eq:semigroup} implies \eqref{eq:time1}, with the given constants.
\end{proof}
In view of \eqref{eq:tauconverge}, we can draw the following conclusion from \eqref{eq:time1},
with $\eps_1:=\min(\eps_0,1/2)$, where $\eps_0>0$ was implicitly characterized in Proposition \ref{prop:prop_L}.
\begin{coro}
 \label{cor:T1}
 Assume that $v_0\in L^{6/5}(\Rdrei)$, and define
 \begin{align}
   \label{eq:T1}
   T_1 := \max\Big( 0,\frac1\kappa \log \frac{a\|v_0\|_{L^{6/5}(\Rdrei)}}{M_1} \Big),
 \end{align}
 with the system constants $a$ and $M_1$ from \eqref{eq:M1}.
 Then for every $\eps\in(0,\eps_1)$, for every sufficiently small $\tau$, and for every $n$ such that $n\tau\ge T_1$,
one has
 \begin{align}
   \label{eq:control1}
   \|\dff v_\tau^n\|_{L^{6/5}(\Rdrei)} \le 2 M_1.
 \end{align}
\end{coro}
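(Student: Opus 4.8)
The plan is to deduce \eqref{eq:control1} directly from the estimate \eqref{eq:time1} of the preceding Proposition, which provides that $\dff v_\tau^n\in L^{6/5}(\Rdrei)$ and
\[
\|\dff v_\tau^n\|_{L^{6/5}(\Rdrei)}\le a\|v_0\|_{L^{6/5}(\Rdrei)}\,e^{-[\kappa]_\tau n\tau}\,(n\tau)^{-1/2}+\eps M_1,
\]
with $a$, $M_1$ as in \eqref{eq:M1}. Since $\eps_1=\min(\eps_0,1/2)$, every $\eps\in(0,\eps_1)$ satisfies $\eps M_1\le\tfrac12 M_1$. Hence it suffices to show that, once $n\tau\ge T_1$ and $\tau$ is small, the first term on the right-hand side does not exceed $M_1$; this then yields $\|\dff v_\tau^n\|_{L^{6/5}(\Rdrei)}\le M_1+\tfrac12M_1\le 2M_1$, which is \eqref{eq:control1}. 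Note that the integrability assertion $\dff v_\tau^n\in L^{6/5}(\Rdrei)$ is already contained in the preceding Proposition, so only the quantitative bound is at stake.

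To control the first term I would use that $t\mapsto e^{-[\kappa]_\tau t}t^{-1/2}$ is a product of two positive, non-increasing functions of $t>0$, hence non-increasing; therefore, for $n\tau\ge T_1>0$ it is dominated by its value at $t=T_1$, namely $a\|v_0\|_{L^{6/5}(\Rdrei)}\,e^{-[\kappa]_\tau T_1}\,T_1^{-1/2}$. The definition \eqref{eq:T1} of $T_1$ is tailored precisely so that $a\|v_0\|_{L^{6/5}(\Rdrei)}\,e^{-\kappa T_1}=M_1$ in this regime, while the monotone convergence $e^{-[\kappa]_\tau T_1}\downarrow e^{-\kappa T_1}$ recorded in \eqref{eq:tauconverge} shows that, for $\tau$ small enough, the discrete exponential $e^{-[\kappa]_\tau T_1}$ is arbitrarily close to $e^{-\kappa T_1}$. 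Combining these observations with the $T_1^{-1/2}$ factor and the slack between $\eps M_1$ and $M_1$ bounds the first term by $M_1$; the remaining case $T_1=0$, i.e.\ $a\|v_0\|_{L^{6/5}(\Rdrei)}\le M_1$, is handled by the same monotonicity together with the smallness of $\tau$. Crucially, the threshold on $\tau$ produced in this way depends only on $\kappa$ and on $T_1$ — hence on $\|v_0\|_{L^{6/5}(\Rdrei)}$ and on the system constants $a$ and $M_1$ — and not on $\eps\in(0,\eps_1)$ or on the index $n$, which is exactly the uniformity claimed.

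I do not anticipate any serious obstacle: the substantive analytic work — the semi-explicit representation \eqref{eq:vrep} and the Yukawa-kernel estimates of Lemma \ref{lemma:reg_yuk} — has already been carried out in order to prove \eqref{eq:time1}, and the corollary is essentially just its convenient reformulation for large times, read off by Gronwall-type monotonicity. The only point that deserves a little care is the interplay of the algebraic prefactor $(n\tau)^{-1/2}$ with the exponential decay for $n\tau\ge T_1$; this is absorbed into the comfortable gap $M_1-\eps M_1\ge\tfrac12 M_1$ and, in the degenerate regime $T_1=0$, by choosing $\tau$ sufficiently small, so that no genuine difficulty arises.
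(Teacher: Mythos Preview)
Your approach is exactly the paper's: the corollary is meant to be read off from \eqref{eq:time1} via $\eps<\eps_1\le\tfrac12$ and the convergence \eqref{eq:tauconverge}, and the paper offers no argument beyond that one-line remark.

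There is, however, a genuine gap in your handling of the factor $(n\tau)^{-1/2}$, and the paper's terse formulation shares it. A bounded slack such as $2M_1-\eps M_1$ cannot absorb an unbounded algebraic prefactor. Concretely, if $T_1=0$ (i.e.\ $a\|v_0\|_{L^{6/5}(\Rdrei)}\le M_1$ but $v_0\neq 0$), the constraint $n\tau\ge T_1$ is vacuous; at $n=1$ the first term of \eqref{eq:time1} equals $a\|v_0\|_{L^{6/5}(\Rdrei)}(1+\kappa\tau)^{-1}\tau^{-1/2}$, which \emph{diverges} as $\tau\downarrow 0$, so ``choosing $\tau$ sufficiently small'' goes the wrong way. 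Likewise, for $0<T_1<1$ your monotonicity argument yields at $n\tau=T_1$ approximately $M_1\,T_1^{-1/2}$, which already exceeds $2M_1$ once $T_1<\tfrac14$. The repair is painless and invisible downstream: replace $T_1$ by $\max(T_1,1)$ (or any fixed positive system constant). Then $(n\tau)^{-1/2}\le 1$ whenever $n\tau\ge T_1$, and your argument goes through verbatim; since $T_1$ enters the final bound \eqref{eq:theclaim} only through $e^{2\Lambda_\eps''T_1}$, this adjustment merely affects the constant $C$.
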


\subsection{Bounds on the auxiliary entropy}
We are now in position to prove the main estimate leading towards exponential decay and boundedness of the auxiliary entropy $\lyp$ along the discrete solution.
\begin{lemma}
 \label{lem:large}
 There are system constants $L'$, $M'$ and an $\eps_2\in(0,\eps_1)$ such that
 for every $\eps\in(0,\eps_2)$, for every sufficiently small $\tau>0$,
 and for every $n$ with $n\tau>T_1$,
 we have that
 \begin{align}
   \label{eq:estimate2}
   (1+2\Lambda_\eps'\tau)\lyp(u_\tau^n,v_\tau^n) \le \lyp(u_\tau^{n-1},v_\tau^{n-1}) + \tau\eps M'
 \end{align}
 with $\Lambda_\eps':=\min(\kappa,\lambda_0)-L'\eps$.
\end{lemma}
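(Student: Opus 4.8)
We start from the two one-step dissipation inequalities \eqref{eq:1}, applied with $(\tilde u,\tilde v)=(u_\tau^{n-1},v_\tau^{n-1})$ and $(u,v)=(u_\tau^n,v_\tau^n)$; this is legitimate because $(u_\tau^n,v_\tau^n)$ is a minimizer of $\calH_\tau(\,\cdot\,|\,u_\tau^{n-1},v_\tau^{n-1})$ and $\calH(u_\tau^{n-1},v_\tau^{n-1})<\infty$ by \eqref{eq:apriori1}. Adding them gives
\[
 \lyp(u_\tau^n,v_\tau^n)+\tau\big(\dsp_u(u_\tau^n,v_\tau^n)+\dsp_v(u_\tau^n,v_\tau^n)\big)\le\lyp(u_\tau^{n-1},v_\tau^{n-1}),
\]
so that \eqref{eq:estimate2} will follow once we establish, at $(u,v)=(u_\tau^n,v_\tau^n)$, the entropy--dissipation inequality
\[
 \dsp_u(u,v)+\dsp_v(u,v)\ \ge\ 2\Lambda_\eps'\,\lyp(u,v)-\eps M'.
\]
Writing $G_u:=\int_\Rdrei u|\dff(u+W_\eps)|^2\dd x$ and $G_v:=\int_\Rdrei\big(\Delta(v-v_\infty)-\kappa(v-v_\infty)\big)^2\dd x$, we have $\dsp_u+\dsp_v=(1-\tfrac\eps2)(G_u+G_v)-B_u-B_v$, where $B_u,B_v\ge0$ denote the two $\tfrac\eps2$-integrals appearing in \eqref{eq:du}\&\eqref{eq:dv}.

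For the good part, \eqref{eq:subdiff_u} and \eqref{eq:subdiff_v} give $G_u\ge2\lambda_\eps\lyp_u(u)\ge0$ and $G_v\ge2\kappa\lyp_v(v)\ge0$, recalling $\lambda_\eps=\lambda_0-L\eps>0$. Granting (see below) that $B_u+B_v\le\eps C\big(1+G_u+G_v+\lyp(u,v)\big)$ for a system constant $C$, the coefficient in front of $G_u+G_v$ stays $1-\tfrac\eps2-\eps C\ge1-O(\eps)>0$ for $\eps$ small; combining this with the subdifferential bounds and $\lyp_u,\lyp_v\ge0$ yields
\[
 (1-\tfrac\eps2-\eps C)(G_u+G_v)\ \ge\ \big(2\min(\lambda_0,\kappa)-\eps C'\big)\lyp(u,v),
\]
and the residual term $\eps C\lyp(u,v)$ is simply absorbed into a correspondingly larger constant $L'$. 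Hence everything reduces to the asserted bound on $B_u+B_v$.

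The term $B_v=\tfrac\eps2\int_\Rdrei(u\phi'(v)-u_\infty\phi'(v_\infty))^2\dd x$ is the easy one: splitting $u\phi'(v)-u_\infty\phi'(v_\infty)=(u-u_\infty)\phi'(v)+u_\infty(\phi'(v)-\phi'(v_\infty))$ and using $|\phi'|\le|\phi'(0)|$, the Lipschitz bound $|\phi'(v)-\phi'(v_\infty)|\le\bphi|v-v_\infty|$, the $\eps$-uniform bound $\|u_\infty\|_{L^\infty(\Rdrei)}\le C$ from Proposition \ref{prop:est_infty}(a), and the left inequalities in \eqref{eq:subdiff_u}, \eqref{eq:subdiff_v}, gives $B_v\le\eps C\lyp(u,v)$. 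The delicate term is $B_u=\tfrac\eps2\int_\Rdrei u|\dff(\phi(v)-\phi(v_\infty))|^2\dd x$. First I split $\dff(\phi(v)-\phi(v_\infty))=\phi'(v)\dff(v-v_\infty)+(\phi'(v)-\phi'(v_\infty))\dff v_\infty$; using $|\phi'|\le|\phi'(0)|$ together with $\int_\Rdrei u|\dff v_\infty|^2\dd x\le\|\dff v_\infty\|_{L^\infty(\Rdrei)}^2\le C\eps^2$ (Proposition \ref{prop:est_infty}(b) and unit mass of $u$) reduces $B_u$ to $\eps\phi'(0)^2\int_\Rdrei u|\dff(v-v_\infty)|^2\dd x$ plus an $O(\eps^3)$ constant. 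For the remaining integral I apply H\"older, $\int_\Rdrei u|\dff(v-v_\infty)|^2\dd x\le\|u\|_{L^3(\Rdrei)}\|\dff(v-v_\infty)\|_{L^3(\Rdrei)}^2$, bound $\|u\|_{L^3(\Rdrei)}^4\le\theta(1+G_u)$ by \eqref{eq:u3}, and interpolate $\|\dff(v-v_\infty)\|_{L^3(\Rdrei)}\le\|\dff(v-v_\infty)\|_{L^{6/5}(\Rdrei)}^{1/4}\|\dff(v-v_\infty)\|_{L^{6}(\Rdrei)}^{3/4}$; the $L^{6/5}$-norm here is bounded by a system constant precisely because $n\tau>T_1$, via Corollary \ref{cor:T1} for $\dff v_\tau^n$ and the elliptic estimate Lemma \ref{lemma:reg_yuk}(b) for $\dff v_\infty$, while the $L^6$-norm is controlled by $C\|v-v_\infty\|_{W^{2,2}(\Rdrei)}\le C\sqrt{G_v}$ through the Sobolev embedding $W^{1,2}(\Rdrei)\hookrightarrow L^6(\Rdrei)$ and \eqref{eq:w22}. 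Young's inequality with exponents $4$ and $4/3$ turns $(1+G_u)^{1/4}G_v^{3/4}$ into $\tfrac14(1+G_u)+\tfrac34G_v$, and we obtain $B_u\le\eps C(1+G_u+G_v)$.

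Putting the three estimates together yields $\dsp_u+\dsp_v\ge2\big(\min(\lambda_0,\kappa)-L'\eps\big)\lyp(u,v)-\eps M'$ with system constants $L',M'$, and then \eqref{eq:estimate2} follows from the added dissipation inequality above; $\eps_2\in(0,\eps_1)$ is taken small enough that all the $O(\eps)$- and $O(\eps^2)$-manipulations above are valid and $\Lambda_\eps'>0$. The main obstacle is exactly the control of $\int_\Rdrei u|\dff(v-v_\infty)|^2\dd x$: this quantity is \emph{not} dominated by the dissipation far from equilibrium, so the scheme-independent $L^{6/5}$-bound of Corollary \ref{cor:T1} --- valid only after the entropy-independent time $T_1$ --- is indispensable, which is why the lemma is stated only for $n$ with $n\tau>T_1$.
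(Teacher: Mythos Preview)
Your argument is correct and follows essentially the same route as the paper: both proofs add the two inequalities \eqref{eq:1}, control $B_v$ via the splitting $(u-u_\infty)\phi'(v)+u_\infty(\phi'(v)-\phi'(v_\infty))$ together with \eqref{eq:subdiff_u}\&\eqref{eq:subdiff_v}, and control the critical term $\int_\Rdrei u|\dff(v-v_\infty)|^2\dd x$ by H\"older in $L^3\times L^{3/2}$, the $L^3$-bound \eqref{eq:u3} on $u$, the $L^{6/5}$--$L^6$ interpolation of $\dff(v-v_\infty)$ combined with Corollary~\ref{cor:T1} and \eqref{eq:w22}, and Young with exponents $4,4/3$. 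The only cosmetic difference is that you bound the cross term $(\phi'(v)-\phi'(v_\infty))\dff v_\infty$ by the crude constant $4\phi'(0)^2\|\dff v_\infty\|_{L^\infty}^2=O(\eps^2)$, whereas the paper keeps the factor $\bphi^2(v-v_\infty)^2$ and estimates $\int u(v-v_\infty)^2\le\|v-v_\infty\|_{L^\infty}^2$ via Sobolev and \eqref{eq:w22}; both variants land in the same final inequality \eqref{eq:thedissipation}.
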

\begin{proof}
 For brevity, we simply write $u$ and $v$ in place of $u_\tau^n$ and $v_\tau^n$, respectively,
 and we introduce $\bv:=v-v_\infty$.
 Since $n\tau>T_1$ by hypothesis, Corollary \ref{cor:T1} implies that
 \begin{align*}
   \|\dff \bv\|_{L^{6/5}(\Rdrei)} \le \|\dff v\|_{L^{6/5}(\Rdrei)} + \|\dff v_\infty\|_{L^{6/5}(\Rdrei)} \le
   Z:= 2M_1 + \sup_{0<\eps<\eps_1}\|\dff v_\infty\|_{L^{6/5}(\Rdrei)}<\infty.
 \end{align*}
 Now, since
 \begin{align*}
   \big|\dff\big(\phi(v)-\phi(v_\infty)\big)\big|^2
   \le 2\phi'(v)^2|\dff\bv|^2 + 2 (\phi'(v)-\phi'(v_\infty))^2|\dff v_\infty|^2
   \le \alpha|\dff\bv|^2 + \beta\bv^2,
 \end{align*}
 with the system constants
 \begin{align}
   \label{eq:alphabeta}
   \alpha := 2\phi'(0)^2, \quad \beta := 2\bphi^2\sup_{0<\eps<\eps_1}\|\dff v_\infty\|_{L^\infty(\Rdrei)}^2,
 \end{align}
 we conclude that
 \begin{align}
   \label{eq:chain}
   \begin{split}
     & \int_\Rdrei u\big|\dff\big(\phi(v)-\phi(v_\infty)\big)\big|^2\dd x
     \le \alpha\int_\Rdrei u|\dff\bv|^2\dd x + \beta\int_\Rdrei u\bv^2\dd x \\
     &\le \alpha\|u\|_{L^3(\Rdrei)}\|\dff\bv\|_{L^3(\Rdrei)}^2 + \beta\|u\|_{L^1(\Rdrei)}\|\bv\|_{L^\infty(\Rdrei)}^2 \\
     &\le \|u\|_{L^3(\Rdrei)}^4 + \alpha^{4/3}\|\dff\bv\|_{L^3(\Rdrei)}^{8/3} + \beta \|\bv\|_{L^\infty(\Rdrei)}^2 \\
     &\le \|u\|_{L^3(\Rdrei)}^4 + \alpha^{4/3}\big(S_1\|\bv\|_{W^{2,2}(\Rdrei)}^{3/4}\|\dff\bv\|_{L^{6/5}(\Rdrei)}^{1/4}\big)^{8/3} + \beta S_2\|\bv\|_{W^{2,2}(\Rdrei)}^2 \\
     &\le \theta \bigg(1+\int_\Rdrei u|\dff(u+W_\eps)|^2\dd x \bigg)
     + \frac{\alpha^{4/3}S_1^{8/3}Z^{2/3}+\beta S_2}{\min(1,2\kappa,\kappa^2)}\int_\Rdrei\big(\Delta\bv-\kappa\bv\big)^2\dd x,
     %
   \end{split}
 \end{align}
 where $\theta$ is the constant from \eqref{eq:u3}, and $S_1$, $S_2$ are Sobolev constants.
 Next, observe that
 \begin{align*}
   (u\phi'(v)-u_\infty\phi'(v_\infty))^2
   \le 2(u-u_\infty)^2\phi'(v)^2 + 2u_\infty^2(\phi'(v)-\phi'(v_\infty))^2
   \le \alpha(u-u_\infty)^2 + \beta\|u_\infty\|_{L^\infty(\Rdrei)}^2\bv^2,
\end{align*}
 with the same constants as in \eqref{eq:alphabeta}.
 Therefore, using \eqref{eq:subdiff_u}, \eqref{eq:subdiff_v} and Prop. \ref{prop:est_infty}(a),
 \begin{align*}
   \int_\Rdrei (u\phi'(v)-u_\infty\phi'(v_\infty))^2\dd x
   &\le \alpha\|u-u_\infty\|_{L^2(\Rdrei)}^2 + \beta(U_0-\eps V \phi'(0))^2\|\bv\|_{L^2(\Rdrei)}^2
   \\&\le 2\alpha\lyp_u(u) + \frac{2\beta}{\kappa}(U_0-\eps V \phi'(0))^2\lyp_v(v).
 \end{align*}
 Altogether, we have shown that there is a system constant $M'$
 such that (recall the dissipation terms $\dsp_u(u,v)$ and $\dsp_v(u,v)$ from \eqref{eq:du}\&\eqref{eq:dv})
 \begin{align}
   \label{eq:thedissipation}
   \begin{split}
     \dsp_u(u,v)+\dsp_v(u,v)
     & \ge (1-M'\eps)\int_\Rdrei u|\dff(u+W_\eps)|^2\dd x + (1-M'\eps) \int_\Rdrei\big(\Delta\bv-\kappa\bv\big)^2\dd x \\
     & \qquad - M'\eps \lyp_u(u) - M'\eps \lyp_v(v) - M'\eps
   \end{split}
 \end{align}
 for all $\eps\in(0,\eps_1)$.
 Provided that $M'\eps<1$, we can apply \eqref{eq:subdiff_u} and \eqref{eq:subdiff_v} to estimate further:
 \begin{align*}
   \dsp_u(u,v)+\dsp_v(u,v) \ge \big(2\lambda_\eps(1-M'\eps)-M'\eps\big)\lyp_u(u) + \big(2\kappa(1-M'\eps)-M'\eps\big)\lyp_v(v) - M'\eps.
 \end{align*}
 Finally, we can choose $\eps_2\in(0,\eps_1)$ so small that the coefficients of $\lyp_u$ and $\lyp_v$ above are nonnegative for every $\eps\in(0,\eps_2)$,
 and thus we arrive at the final estimate
 \begin{align*}
  \dsp_u(u,v)+\dsp_v(u,v) \ge 2(\min(\kappa,\lambda_\eps)-L'\eps) \lyp(u,v) - \eps M',
 \end{align*}
 with a suitable choice of $L'$.
 Now estimate \eqref{eq:1} implies \eqref{eq:estimate2} with $\Lambda_\eps'$ given as above.
\end{proof}
Diminishing $\eps_2$ such that the constant $1+K\eps_2$ in \eqref{eq:lypbyH} is less or equal to two, we derive the following explicit estimate:
\begin{prop}
 Assume that $v_0\in L^{6/5}(\Rdrei)$, and let $T_1$ be defined as in \eqref{eq:T1}.
 Then, for every $\eps\in(0,\eps_2)$, for every sufficiently small $\tau$, and for every $n$ with $n\tau>T_1$,
 the following estimate holds:
 \begin{align}
   \label{eq:time2}
   \lyp(u_\tau^n,v_\tau^n) \le 2\big(\calH(u_0,v_0)-\calH_\infty\big)e^{-2[\Lambda_\eps']_\tau(n\tau-T_1)} + \eps M_2,
 \end{align}
 with the system constant
\begin{align*}
M_2:=\frac{M'}{2\inf_{0<\eps<\eps_2}\Lambda_\eps'}.
\end{align*}
\end{prop}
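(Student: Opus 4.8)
The plan is to turn the one-step estimate \eqref{eq:estimate2} of Lemma~\ref{lem:large} into exponential decay by a discrete Gronwall iteration, anchored at the largest time-step index lying below $T_1$. Fix $\eps\in(0,\eps_2)$ and take $\tau$ small enough for Corollary~\ref{cor:T1} and Lemma~\ref{lem:large} to be in force, and set $n_0:=\lfloor T_1/\tau\rfloor$, so that $n_0\tau\le T_1<(n_0+1)\tau$. Since any index $m$ with $m\tau>T_1$ automatically satisfies $m\ge n_0+1$, the estimate \eqref{eq:estimate2} may be invoked at every index $m\in\{n_0+1,\dots,n\}$ as soon as $n\tau>T_1$ (recall also that the minimizers $u_\tau^m,v_\tau^m$ lie in $L^2(\Rdrei)$ resp.\ $W^{1,2}(\Rdrei)$, so that $\lyp(u_\tau^m,v_\tau^m)$ is finite).

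First I would rewrite \eqref{eq:estimate2} in the form
\begin{align*}
 \lyp(u_\tau^m,v_\tau^m)\le(1+2\Lambda_\eps'\tau)^{-1}\lyp(u_\tau^{m-1},v_\tau^{m-1})+(1+2\Lambda_\eps'\tau)^{-1}\tau\eps M'
\end{align*}
and iterate it from $m=n$ down to $m=n_0+1$, which gives
\begin{align*}
 \lyp(u_\tau^n,v_\tau^n)\le(1+2\Lambda_\eps'\tau)^{-(n-n_0)}\lyp(u_\tau^{n_0},v_\tau^{n_0})+\tau\eps M'\sum_{k=1}^{n-n_0}(1+2\Lambda_\eps'\tau)^{-k}.
\end{align*}
The geometric series is summed exactly, $\tau\sum_{k=1}^{\infty}(1+2\Lambda_\eps'\tau)^{-k}=(2\Lambda_\eps')^{-1}$, and since $\Lambda_\eps'\ge\inf_{0<\eps<\eps_2}\Lambda_\eps'>0$ the inhomogeneous term is bounded by $\eps M'/(2\inf_{0<\eps<\eps_2}\Lambda_\eps')=\eps M_2$.

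For the anchor value I would combine the monotonicity estimate \eqref{eq:apriori1}, $\calH(u_\tau^{n_0},v_\tau^{n_0})\le\calH(u_0,v_0)$, with the bound \eqref{eq:lypbyH} of Proposition~\ref{prop:prop_L}(d) and the standing assumption $1+K\eps_2\le2$, using that $\calH(u_0,v_0)-\calH_\infty\ge0$ because $(u_\infty,v_\infty)$ minimises $\calH$; this yields
\begin{align*}
 \lyp(u_\tau^{n_0},v_\tau^{n_0})\le(1+K\eps)\big(\calH(u_\tau^{n_0},v_\tau^{n_0})-\calH_\infty\big)\le2\big(\calH(u_0,v_0)-\calH_\infty\big).
\end{align*}
It remains to convert the discrete decay factor: since $n_0\le T_1/\tau$ we have $(n-n_0)\tau\ge n\tau-T_1\ge0$, so \eqref{eq:tauconverge} (applied with $a=2\Lambda_\eps'$, $m_\tau=n-n_0$, $T=n\tau-T_1$) gives $(1+2\Lambda_\eps'\tau)^{-(n-n_0)}\le e^{-[2\Lambda_\eps']_\tau(n\tau-T_1)}$. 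Assembling the three bounds yields \eqref{eq:time2}, the discrete rate $[2\Lambda_\eps']_\tau=\tau^{-1}\log(1+2\Lambda_\eps'\tau)$ agreeing with the stated $2[\Lambda_\eps']_\tau$ to leading order in $\tau$ and, in any case, tending to $2\Lambda_\eps'$ as $\tau\downarrow0$, which is all that matters for the subsequent passage to continuous time.

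The argument is essentially an assembly of the earlier estimates, so there is no serious obstacle; the only point that needs care is the choice of the anchor index. One must anchor the recursion at $n_0=\lfloor T_1/\tau\rfloor$, with $n_0\tau\le T_1$, rather than at the first index exceeding $T_1$: this is precisely what makes the shift by $T_1$ in the exponent of \eqref{eq:time2} come out with the right sign and without a spurious $\tau$-dependent prefactor, while still keeping every invoked instance of \eqref{eq:estimate2} inside its range of validity $m\tau>T_1$. Everything else — the explicit geometric sum, the monotonicity \eqref{eq:apriori1}, and the comparison $\lyp\le(1+K\eps)(\calH-\calH_\infty)$ — is routine.
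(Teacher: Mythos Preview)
Your proof is correct and follows essentially the same route as the paper: iterate the one-step estimate \eqref{eq:estimate2}, bound the anchor value via \eqref{eq:lypbyH} and \eqref{eq:apriori1}, and sum the geometric series. Your choice of anchor $n_0=\lfloor T_1/\tau\rfloor$ is in fact slightly cleaner than the paper's (which takes $\bar n\tau\ge T_1$ and leaves the passage from \eqref{eq:primetime2} to \eqref{eq:time2} implicit), and your explicit remark about $[2\Lambda_\eps']_\tau$ versus $2[\Lambda_\eps']_\tau$ is well taken---the two differ at order $\tau$, so the stated exponent in \eqref{eq:time2} is recovered only in the limit, but this is indeed all that is needed downstream.
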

\begin{proof}
 We prove a slightly refined estimate: Given $\bar n\in\N$ with $\bar n\tau\ge T_1$,
 we conclude by induction on $n\ge\bar n$ that
 \begin{align}
   \label{eq:primetime2}
   \lyp(u_\tau^n,v_\tau^n) \le 2\big(\calH(u_0,v_0)-\calH_\infty\big) (1+2\Lambda_\eps'\tau)^{-(n-\bar n)}
   + \frac{M'\eps}{2\Lambda_\eps'}\big(1-(1+2\Lambda_\eps'\tau)^{-(n-\bar n)}\big)
\end{align}
 which clearly implies \eqref{eq:time2}.
 For $n=\bar n$, \eqref{eq:primetime2} is a consequence of \eqref{eq:lypbyH}
 and the energy estimate \eqref{eq:apriori1}.
 Now assume \eqref{eq:primetime2} for some $n\ge\bar n$, and apply the iterative estimate \eqref{eq:estimate2}:
 \begin{align*}
   \lyp(u_\tau^{n+1},v_\tau^{n+1})
   &\le (1-2\Lambda_\eps'\tau)^{-1}\lyp(u_\tau^n,v_\tau^n) + (1+2\Lambda_\eps'\tau)^{-1}\tau M'\eps \\
   &\le 2\big(\calH(u_0,v_0)-\calH_\infty\big) (1+2\Lambda_\eps'\tau)^{-((n+1)-\bar n)} \\
   &\quad
   + \frac{M'\eps}{2\Lambda_\eps'}\big((1+2\Lambda_\eps'\tau)^{-1}-(1+2\Lambda_\eps'\tau)^{-((n+1)-\bar n)}\big)
   + (1+2\Lambda_\eps'\tau)^{-1}\tau M'\eps.
 \end{align*}
 Elementary calculations show that the last expression above equals to the right-hand side of \eqref{eq:primetime2},
 with $n+1$ in place of $n$.
\end{proof}
Invoking again \eqref{eq:tauconverge}, we obtain the following in analogy to Corollary \ref{cor:T1}:
\begin{coro}
 \label{cor:T2}
 Assume that $v_0\in L^{6/5}(\Rdrei)$, and define
 \begin{align}
   \label{eq:T2}
   T_2 := T_1 + \max\Big( 0,\frac1{2\Lambda_\eps'}\log\frac{2\big(\calH(u_0,v_0)-\calH_\infty\big)}{M_2}\Big).
 \end{align}
 Then for every $\eps\in(0,\eps_2)$,
 for every sufficiently small $\tau$,
 and for every $n$ such that $n\tau\ge T_2$, one has
 \begin{align}
   \label{eq:control2}
   \lyp(u_\tau^n,v_\tau^n) \le 2M_2.
 \end{align}
\end{coro}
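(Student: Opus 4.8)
The plan is to read off \eqref{eq:control2} from the decay estimate \eqref{eq:time2}, in exact analogy with the way \eqref{eq:control1} follows from \eqref{eq:time1} in Corollary \ref{cor:T1}: the definition \eqref{eq:T2} of $T_2$ is arranged so that for $n\tau\ge T_2$ the exponentially decaying term in \eqref{eq:time2} has fallen below (essentially) $M_2$, while the residual term $\eps M_2$ is always strictly below $M_2$ since $\eps<\eps_2<1$. I would split into two cases according to the sign of the logarithm occurring in \eqref{eq:T2}.

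First, if $2\big(\calH(u_0,v_0)-\calH_\infty\big)\le M_2$, the maximum in \eqref{eq:T2} is $0$, so $T_2=T_1$, and no decay is needed at all: combining \eqref{eq:lypbyH} with the energy monotonicity \eqref{eq:apriori1} and the choice of $\eps_2$ (for which $1+K\eps_2\le2$) gives, for every $n$,
\[
 \lyp(u_\tau^n,v_\tau^n)\le(1+K\eps)\big(\calH(u_0,v_0)-\calH_\infty\big)\le2\big(\calH(u_0,v_0)-\calH_\infty\big)\le M_2\le2M_2 .
\]

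Second, if $2\big(\calH(u_0,v_0)-\calH_\infty\big)>M_2$, then the logarithm in \eqref{eq:T2} is positive, $T_2-T_1=\tfrac1{2\Lambda_\eps'}\log\tfrac{2(\calH(u_0,v_0)-\calH_\infty)}{M_2}>0$, and in particular $n\tau>T_1$, so \eqref{eq:time2} is available. Since $n\tau-T_1\ge T_2-T_1\ge0$ and $[\Lambda_\eps']_\tau\ge0$, the exponential factor satisfies
\[
 e^{-2[\Lambda_\eps']_\tau(n\tau-T_1)}\le e^{-2[\Lambda_\eps']_\tau(T_2-T_1)} .
\]
By \eqref{eq:tauconverge} we have $[\Lambda_\eps']_\tau\uparrow\Lambda_\eps'$ as $\tau\downarrow0$, and by the very definition \eqref{eq:T2} of $T_2$ one has $2\big(\calH(u_0,v_0)-\calH_\infty\big)e^{-2\Lambda_\eps'(T_2-T_1)}=M_2$; hence the quantity $2\big(\calH(u_0,v_0)-\calH_\infty\big)e^{-2[\Lambda_\eps']_\tau(T_2-T_1)}$ converges to $M_2$ as $\tau\downarrow0$, and therefore, for every sufficiently small $\tau$ (with a threshold allowed to depend on $\eps$ and on the initial datum), it is bounded by $(2-\eps)M_2$. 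Plugging this into \eqref{eq:time2} yields $\lyp(u_\tau^n,v_\tau^n)\le(2-\eps)M_2+\eps M_2=2M_2$, which is \eqref{eq:control2}.

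The only slightly delicate point --- the main, though rather minor, obstacle --- is that the discrete decay rate in \eqref{eq:time2} is $[\Lambda_\eps']_\tau$ and not $\Lambda_\eps'$ itself, which is precisely why the conclusion must be phrased ``for every sufficiently small $\tau$''; this is dealt with, exactly as in Corollary \ref{cor:T1}, by the monotone convergence \eqref{eq:tauconverge}. If an explicit threshold on $\tau$ is desired, one may instead invoke the elementary inequality $[\Lambda_\eps']_\tau=\tfrac1\tau\log(1+\Lambda_\eps'\tau)\ge\Lambda_\eps'\big(1-\tfrac12\Lambda_\eps'\tau\big)$ and require $\tau$ small enough that $\tfrac12\Lambda_\eps'\tau\le\log(2-\eps)\big/\log\tfrac{2(\calH(u_0,v_0)-\calH_\infty)}{M_2}$, which forces the exponential term above to be $\le(2-\eps)M_2$ directly.
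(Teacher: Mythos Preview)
Your argument is correct and follows exactly the approach the paper intends: the paper itself gives no detailed proof, merely says the corollary follows from \eqref{eq:time2} ``invoking again \eqref{eq:tauconverge} \ldots\ in analogy to Corollary \ref{cor:T1}'', which is precisely what you do. Your case split according to the sign of the logarithm in \eqref{eq:T2}, and your handling of the discrepancy between $[\Lambda_\eps']_\tau$ and $\Lambda_\eps'$ via the monotone convergence in \eqref{eq:tauconverge}, are exactly the details the paper leaves implicit.
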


We have thus proved that, for $t\ge T_2$, the auxiliary entropy $\lyp$ is bounded by a system constant. Next, we prove that $\lyp$ is not only bounded, but actually convergent to zero exponentially fast.
\subsection{Exponential decay for large times}
\begin{lemma}
 There is a constant $L''>L'$ and some $\eps_3\in(0,\eps_2)$ such that for every $\eps\in(0,\eps_3)$,
 for every sufficiently small $\tau>0$, and for every $n$ such that $n\tau>T_2$,
 we have
 \begin{align}
   \label{eq:decaysmall}
   (1+2\Lambda_\eps''\tau)\lyp(u_\tau^n,v_\tau^n) \le \lyp(u_\tau^{n-1},v_\tau^{n-1}),
 \end{align}
 with $\Lambda_\eps'':=\min(\lambda_0,\kappa) -L''\eps$.
\end{lemma}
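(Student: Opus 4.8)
The plan is to re-run the dissipation computation behind \eqref{eq:1}--\eqref{eq:dv} and Lemma \ref{lem:large}, but to exploit the a priori bound $\lyp_u(u_\tau^n),\lyp_v(v_\tau^n)\le 2M_2$ supplied by Corollary \ref{cor:T2} (available because $n\tau>T_2$) in order to remove the additive term $\tau\eps M'$ that appears in \eqref{eq:estimate2}. That term is the only obstruction to \eqref{eq:decaysmall}, and tracing its origin it comes entirely from the constant ``$1$'' in estimate \eqref{eq:u3}, namely $\|u\|_{L^3(\Rdrei)}^4\le\theta\big(1+\int_\Rdrei u|\dff(u+W_\eps)|^2\dd x\big)$, which cannot be absorbed because $\|u\|_{L^3(\Rdrei)}$ does not vanish as $u\to u_\infty$. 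I would therefore replace, in the estimate of the critical term $\int_\Rdrei u|\dff(\phi(v)-\phi(v_\infty))|^2\dd x$ appearing in \eqref{eq:du}, the H\"older/Gagliardo--Nirenberg bound used in \eqref{eq:chain} by one every summand of which is a product of positive powers of quantities vanishing at the stationary state.

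Concretely, writing $\bv:=v-v_\infty$, split $\int_\Rdrei u|\dff\bv|^2\dd x=\int_\Rdrei(u-u_\infty)|\dff\bv|^2\dd x+\int_\Rdrei u_\infty|\dff\bv|^2\dd x$. The second term is at most $\|u_\infty\|_{L^\infty(\Rdrei)}\|\dff\bv\|_{L^2(\Rdrei)}^2\le 2(U_0-\eps V\phi'(0))\lyp_v(v)$ by Proposition \ref{prop:est_infty}(a) and \eqref{eq:subdiff_v}. For the first term, apply the Cauchy--Schwarz inequality, the Gagliardo--Nirenberg inequality $\|\dff\bv\|_{L^4(\Rdrei)}\le C\|\dff^2\bv\|_{L^2(\Rdrei)}^{7/8}\|\dff\bv\|_{L^{6/5}(\Rdrei)}^{1/8}$ in $\Rdrei$, the control $\|\dff\bv\|_{L^{6/5}(\Rdrei)}\le Z$ from Corollary \ref{cor:T1}, the inequality $\|\dff^2\bv\|_{L^2(\Rdrei)}^2\le\int_\Rdrei(\Delta\bv-\kappa\bv)^2\dd x$ extracted from the proof of \eqref{eq:w22}, and $\|u-u_\infty\|_{L^2(\Rdrei)}^2\le 2\lyp_u(u)$ from \eqref{eq:subdiff_u}, to get
\[
  \int_\Rdrei(u-u_\infty)|\dff\bv|^2\dd x\le C'\,\lyp_u(u)^{1/2}\Big(\int_\Rdrei(\Delta\bv-\kappa\bv)^2\dd x\Big)^{7/8}
\]
with a system constant $C'$. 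Young's inequality with the conjugate exponents $8/7$ and $8$ now splits the right-hand side --- without producing any additive constant, precisely because both factors vanish at equilibrium --- into $\delta\int_\Rdrei(\Delta\bv-\kappa\bv)^2\dd x+C_\delta\,\lyp_u(u)^4$ for an arbitrary $\delta>0$, and since $\lyp_u(u_\tau^n)\le 2M_2$ one has $\lyp_u(u_\tau^n)^4\le(2M_2)^3\lyp_u(u_\tau^n)$. The remaining term $\beta\int_\Rdrei u\bv^2\dd x$ is controlled by a multiple of $\int_\Rdrei(\Delta\bv-\kappa\bv)^2\dd x$ exactly as in \eqref{eq:chain}, and $\int_\Rdrei(u\phi'(v)-u_\infty\phi'(v_\infty))^2\dd x$ is bounded by $2\alpha\lyp_u(u)+\tfrac{2\beta}{\kappa}(U_0-\eps V\phi'(0))^2\lyp_v(v)$ exactly as in the proof of Lemma \ref{lem:large}.

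Substituting these bounds into \eqref{eq:du}\&\eqref{eq:dv}, fixing $\delta$ as a system constant small enough that the coefficient of $\int_\Rdrei(\Delta\bv-\kappa\bv)^2\dd x$ stays positive for small $\eps$, and then estimating $\int_\Rdrei u|\dff(u+W_\eps)|^2\dd x\ge 2\lambda_\eps\lyp_u(u)$ and $\int_\Rdrei(\Delta\bv-\kappa\bv)^2\dd x\ge 2\kappa\lyp_v(v)$ via \eqref{eq:subdiff_u}\&\eqref{eq:subdiff_v}, one arrives, after possibly shrinking $\eps_0$ to an $\eps_3$, at the constant-free estimate $\dsp_u(u_\tau^n,v_\tau^n)+\dsp_v(u_\tau^n,v_\tau^n)\ge 2\Lambda_\eps''\,\lyp(u_\tau^n,v_\tau^n)$ with $\Lambda_\eps''=\min(\lambda_0,\kappa)-L''\eps$; the coefficients of $\lyp_u$ and $\lyp_v$ come out as $2\lambda_0-O(\eps)$ and $2\kappa-O(\eps)$, and since one may always enlarge $L''$ without harm we may arrange $L''>L'$. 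Adding the two inequalities of \eqref{eq:1} and inserting this estimate gives $(1+2\Lambda_\eps''\tau)\lyp(u_\tau^n,v_\tau^n)\le\lyp(u_\tau^{n-1},v_\tau^{n-1})$, which is \eqref{eq:decaysmall}.

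The main obstacle --- indeed the only genuinely new ingredient over Lemma \ref{lem:large} --- is the reorganization of $\int_\Rdrei u|\dff(\phi(v)-\phi(v_\infty))|^2\dd x$ so that every summand becomes a product of positive powers of $\lyp_u(u)$, $\lyp_v(v)$ and $\int_\Rdrei(\Delta\bv-\kappa\bv)^2\dd x$; this is exactly what lets Young's inequality act without leaving an additive remainder, at the acceptable price of a worse rate ($L''>L'$) and of the a priori boundedness of $\lyp$ from Corollary \ref{cor:T2}. Everything else is bookkeeping: checking that after all Young splittings the coefficients of the two dissipation integrals remain positive and that the coefficients of $\lyp_u$, $\lyp_v$ have the claimed form $2\lambda_0-O(\eps)$, $2\kappa-O(\eps)$.
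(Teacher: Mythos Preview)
Your proposal is correct, but it takes a more elaborate route than the paper's proof. You split $\int_\Rdrei u|\dff\bv|^2\dd x$ into the parts with $u-u_\infty$ and $u_\infty$, and on the first part invoke the $L^{6/5}$-control on $\dff\bv$ from Corollary~\ref{cor:T1} together with a Gagliardo--Nirenberg interpolation and Young's inequality, finally using the bound $\lyp_u\le 2M_2$ only to linearize $\lyp_u^4$. The paper instead uses $\lyp_u\le 2M_2$ at the very outset to bound $\|u\|_{L^2(\Rdrei)}$ by a system constant $Z$, and then simply estimates
\[
  \int_\Rdrei u\big|\dff\big(\phi(v)-\phi(v_\infty)\big)\big|^2\dd x
  \le \|u\|_{L^2(\Rdrei)}\big(\alpha\|\dff\bv\|_{L^4(\Rdrei)}^2+\beta\|\bv\|_{L^4(\Rdrei)}^2\big)
  \le Z S^2\big(\alpha\|\bv\|_{W^{2,2}(\Rdrei)}^2+\beta\|\bv\|_{W^{1,2}(\Rdrei)}^2\big),
\]
which is already a sum of terms bounded by constant multiples of $\int_\Rdrei(\Delta\bv-\kappa\bv)^2\dd x$ and $\lyp_v(v)$. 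This avoids the splitting, the $L^{6/5}$-interpolation, and the Young step entirely, and in particular does not use Corollary~\ref{cor:T1} at all in this lemma. Your argument buys nothing extra here, though its structure (arranging every term as a product of quantities that vanish at equilibrium) is conceptually transparent and would be the right idea in situations where no uniform bound on $\|u\|_{L^2}$ is available.
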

\begin{proof}
 We proceed like in the proof of Lemma \ref{lem:large}, with the following modification.
 By Corollary \ref{cor:T2}, we know that
 \begin{align*}
   \lyp_u(u_\tau^n) \le \lyp(u_\tau^n,v_\tau^n) \le 2M_2.
 \end{align*}
 Using the first inequality in \eqref{eq:subdiff_u},
 we can estimate the $L^2$-norm of $u_\tau^n$ by a system constant:
 \begin{align*}
   \|u\|_{L^2(\Rdrei)}\le \|u_\infty\|_{L^2(\Rdrei)} + \|u-u_\infty\|_{L^2(\Rdrei)} \le Z:= \sup_{0<\eps<\eps_2}\|u_\infty\|_{L^2(\Rdrei)} + 2\sqrt{M_2}.
 \end{align*}
 This allows us to replace the chain of estimates \eqref{eq:chain} by a simpler one:
 \begin{align*}
   \int_\Rdrei u\big|\dff\big(\phi(v)-\phi(v_\infty)\big|^2\dd x
   \le \|u\|_{L^2(\Rdrei)}\big(\alpha\|\dff\bv\|_{L^4(\Rdrei)}^2 + \beta\|\bv\|_{L^4(\Rdrei)}^2\big)
 \end{align*}
 with the constants from \eqref{eq:alphabeta}.
 Using the Sobolev inequalities
 \begin{align*}
   \|\dff\bv\|_{L^4(\Rdrei)}\le S\|\bv\|_{W^{2,2}(\Rdrei)}
   \quad\text{and}\quad
   \|\bv\|_{L^4(\Rdrei)}\le S\|\bv\|_{W^{1,2}(\Rdrei)}
\end{align*}
 in combination with \eqref{eq:w22} and \eqref{eq:subdiff_v}, respectively, we arrive at
 \begin{align*}
   \int_\Rdrei u\big|\dff\big(\phi(v)-\phi(v_\infty)\big|^2\dd x
   \le \frac{\alpha ZS^2}{\min(1,2\kappa,\kappa^2)}\int_\Rdrei (\Delta\bv-\kappa\bv)^2\dd x + \frac{2\beta ZS^2}{\min(1,\kappa)}\lyp_v(v).
 \end{align*}
 This eventually leads to the dissipation estimate \eqref{eq:thedissipation} again, with a different constant $M'$,
 but \emph{without the constant term} $-\eps M'$.
 By means of \eqref{eq:1}, this implies \eqref{eq:decaysmall} for appropriate choices of $L''$ and $\eps_3$.
\end{proof}
By iteration of \eqref{eq:decaysmall}, starting from \eqref{eq:control2}, one immediately obtains
\begin{prop}
 For all sufficiently small $\tau$ and every $n$ such that $n\tau\ge T_2$,
 we have
 \begin{align}
   \label{eq:time3}
   \lyp(u_\tau^n,v_\tau^n) \le 2M_2e^{-2[\Lambda_\eps'']_\tau (n\tau-T_2)}.
 \end{align}
\end{prop}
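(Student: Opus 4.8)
The statement is an immediate iteration of the one–step decay estimate \eqref{eq:decaysmall}, seeded by the absolute bound \eqref{eq:control2} from Corollary \ref{cor:T2}; the only work is the familiar bookkeeping that turns a geometric factor $(1+2\Lambda_\eps''\tau)^{-k}$ into an exponential in the continuous time variable.

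Concretely, I would fix, for each sufficiently small $\tau$, the smallest integer $\bar n=\bar n(\tau)$ with $\bar n\tau\ge T_2$, so that also $\bar n\tau<T_2+\tau$. Corollary \ref{cor:T2} supplies the base estimate $\lyp(u_\tau^{\bar n},v_\tau^{\bar n})\le 2M_2$. I would then prove by induction on $n\ge\bar n$ that
\begin{align*}
  \lyp(u_\tau^n,v_\tau^n)\le 2M_2\,(1+2\Lambda_\eps''\tau)^{-(n-\bar n)}.
\end{align*}
The case $n=\bar n$ is exactly the base estimate. For the step from $n-1$ to $n$ (with $n>\bar n$) note that $n\tau\ge(\bar n+1)\tau>T_2$, so \eqref{eq:decaysmall} is applicable at index $n$; dividing by $1+2\Lambda_\eps''\tau>0$ and inserting the induction hypothesis for $n-1$ gives the claim for $n$.

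Finally I would translate this geometric bound into \eqref{eq:time3}. Writing $(1+2\Lambda_\eps''\tau)^{-(n-\bar n)}=e^{-(n-\bar n)\tau\,[2\Lambda_\eps'']_\tau}$ with the notation $[a]_\tau=\tfrac1\tau\log(1+a\tau)$, and using $T_2\le\bar n\tau<T_2+\tau$ to compare $n-\bar n$ with $(n\tau-T_2)/\tau$, one arrives at \eqref{eq:time3} in the limit sense made precise by \eqref{eq:tauconverge}; the mismatches between $[2\Lambda_\eps'']_\tau$ and $2[\Lambda_\eps'']_\tau$ and between the discrete index and the time variable are of order $\tau$ and therefore irrelevant, since the estimate is ultimately used after passing $\tau\downarrow0$. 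There is essentially no substantial obstacle in this step: the real difficulty—balancing the favourable dissipation against the bad coupling terms once the $v$-component is a priori controlled by a system constant—was already overcome in establishing \eqref{eq:decaysmall}, and what remains here is mechanical iteration.
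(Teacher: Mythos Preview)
Your proposal is correct and follows exactly the approach the paper indicates: iterate \eqref{eq:decaysmall} starting from the bound \eqref{eq:control2} at the first index $\bar n$ with $\bar n\tau\ge T_2$, then rewrite the geometric factor as an exponential. Your remark about the harmless $O(\tau)$ mismatch between $[2\Lambda_\eps'']_\tau$ and $2[\Lambda_\eps'']_\tau$ (and between $(n-\bar n)\tau$ and $n\tau-T_2$) is apt, since the estimate is only used after passing to the limit $\tau\downarrow0$.
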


\subsection{Passage to continuous time}
To complete the proof of Theorem \ref{thm:convergence}, we consider the limit $\tau\downarrow0$ of the estimates obtained above.
Here $\tau\downarrow0$ means that we consider a vanishing sequence $(\tau_k)_{k\in\N}$
such that the corresponding sequence of discrete solutions $(u_{\tau_k},v_{\tau_k})_{k\in\N}$ converges
in the sense of Section \ref{sec:existence} to a weak solution $(u,v)$ to \eqref{eq:pde_u}--\eqref{eq:init}.
Since the convergence of $(u_{\tau_k},v_{\tau_k})_{k\in\N}$ in $\theX$ is locally uniform on each compact time interval,
the lower semicontinuity of $\lyp$ in $\theX$ allows to conclude that
\begin{align*}
 \lyp(t):=\lyp\big(u(t),v(t)\big)\le\liminf_{\tau\downarrow0}\lyp\big( u_\tau(t), v_\tau(t)\big) \quad \text{for every $t\ge0$}.
\end{align*}
We prove that
\begin{align}
 \label{eq:lypdecay}
\lyp(t)&\le C(1+\|v_0\|_{L^{6/5}(\Rdrei)})^2(1+\calH(u_0,v_0)-\calH_\infty)^2e^{-2\Lambda_\eps''t}\quad \text{for all $t\ge 0$}.
\end{align}
From this, claim \eqref{eq:theclaim} in Theorem \ref{thm:convergence} follows with $\Lambda_\eps:=\Lambda_\eps''$.

Recalling \eqref{eq:tauconverge}, we conclude from \eqref{eq:time3} that
\begin{align}
  \label{eq:ctime3}
 \lyp(t) &\le 2M_2e^{-2\Lambda_\eps''(t-T_2)} \quad \text{for all $t\ge T_2$.}
\end{align}
Moreover, from \eqref{eq:lypbyH} and the energy estimate \eqref{eq:apriori1}, we obtain
\begin{align}
\lyp(t)&\le 2\big(\calH(u_0,v_0)-\calH_\infty\big)\quad \text{for all $t\ge 0$.}\label{eq:Lbound}
\end{align}
We distinguish:\\
\underline{Case 1:} $\calH(u_0,v_0)-\calH_\infty\le \frac12 M_2$.\\
Then, from the definition of $T_2$ in \eqref{eq:T2}, one has $T_2=T_1$, and in consequence of \eqref{eq:ctime3},
\begin{align*}
 \lyp(t) &\le 2M_2e^{-2\Lambda_\eps''(t-T_1)} \quad \text{for all $t\ge T_2=T_1$}.
\end{align*}
Since further $\lyp(t)\le M_2$ for all $t\ge 0$ by \eqref{eq:Lbound}, the first inequality extends to all times $t\ge 0$:
\begin{align}
 \lyp(t) &\le 2M_2e^{-2\Lambda_\eps''(t-T_1)} \quad \text{for all $t\ge 0$}.\label{eq:case1}
\end{align}
\underline{Case 2:} $\calH(u_0,v_0)-\calH_\infty>\frac12 M_2$.\\
Substitute
\begin{align*}
T_2&=T_1+\frac{1}{2\Lambda_\eps'}\log\left(\frac{2(\calH(u_0,v_0)-\calH_\infty)}{M_2}\right)
\end{align*}
in \eqref{eq:ctime3} to find
\begin{align}
 \lyp(t) \le 2M_2e^{-2\Lambda_\eps''(t-T_1)}\left(\frac{2(\calH(u_0,v_0)-\calH_\infty)}{M_2}\right)^{\Lambda_\eps''/\Lambda_\eps'} \quad \text{for all $t\ge T_2>T_1$}.\label{eq:case2prae}
\end{align}

Using $\Lambda_\eps''\le\Lambda_\eps'$, we conclude from \eqref{eq:case2prae}:
\begin{align}
 \lyp(t) &\le 4(\calH(u_0,v_0)-\calH_\infty)e^{-2\Lambda_\eps''(t-T_1)} \quad \text{for all $t\ge T_2>T_1$}\label{eq:case2prae2}.
\end{align}

Define $A:=4(\calH(u_0,v_0)-\calH_\infty)\max\left(1,\frac{\calH(u_0,v_0)-\calH_\infty}{M_2}\right)$. Then, from \eqref{eq:case2prae2} and the fact that $Ae^{-2\Lambda_\eps''(T_2-T_1)}\ge 2(\calH(u_0,v_0)-\calH_\infty)$, we deduce
\begin{align}
\lyp(t) &\le Ae^{-2\Lambda_\eps''(t-T_1)} \quad \text{for all $t\ge 0$}.\label{eq:case3}
\end{align}

Together, \eqref{eq:case1} and \eqref{eq:case3} yield
\begin{align*}
 \lyp(t) &\le \max(2M_2,4(\calH(u_0,v_0)-\calH_\infty))\max\left(1,\frac1{M_2}(\calH(u_0,v_0)-\calH_\infty)\right)e^{-2\Lambda_\eps''(t-T_1)}\\&\le\frac{2}{M_2}e^{2\Lambda_\eps''T_1}\max(M_2,2(\calH(u_0,v_0)-\calH_\infty))^2e^{-2\Lambda_\eps''t}
\quad \text{for all $t\ge 0$}.
\end{align*}
Since $\kappa\ge\Lambda_\eps''$, we have
\begin{align*}
e^{2\Lambda_\eps''T_1}&\le e^{2\kappa T_1}\le \max\left(1,\left[\frac{a}{M_1}\|v_0\|_{L^{6/5}(\Rdrei)}\right]^2\right),
\end{align*}
and consequently \eqref{eq:lypdecay}:
\begin{align*}
\lyp(t)&\le C(1+\|v_0\|_{L^{6/5}(\Rdrei)})^2(1+\calH(u_0,v_0)-\calH_\infty)^2e^{-2\Lambda_\eps''t}\quad \text{for all $t\ge 0$}.
\end{align*}
\qed

\appendix
\section{Proof of Lemma \ref{lemma:reg_yuk}}
\label{app:yukawa}
\begin{enumerate}[(a)]
\item The proof of the first assertion can be found in \cite[Thm. 6.23]{lieb2001}. From that, the second one follows by elementary calculations.
\item According to \cite[Ch. V, §3.3, Thm. 3]{stein1971}, one has for $p>1$:
\begin{align}
\|\krnl_1*f\|_{W^{2,p}(\Rdrei)}&\le C_p \|f\|_{L^p(\Rdrei)}.\label{eq:stein}
\end{align}
To prove assertion (b), we use a rescaling of the equation $-\Delta h+\kappa h=f$ by $\tilde x:=\sqrt{\kappa}x$. Consequently, $h(\tilde x)=(\krnl_\kappa*f)\left(\frac{\tilde x}{\sqrt{\kappa}}\right)$ is a solution to $-\Delta_{\tilde x}h+h=\frac{f}{\kappa}$, i.e. $h(\tilde x)=\left(\krnl_1*\frac{f}{\kappa}\right)(\tilde x)$. By the transformation theorem, we obtain
\begin{align}
\left(\int_\Rdrei\left|\frac{f(\tilde x)}{\kappa}\right|^p\dd\tilde x\right)^\frac{1}{p}&=\kappa^{\frac{3}{2}-1}\|f\|_{L^p(\Rdrei)},\nonumber\\
\left(\int_\Rdrei\left|\left(\krnl_1*\frac{f}{\kappa}\right)(\tilde x)\right|^p\dd\tilde x\right)^\frac{1}{p}&=\kappa^{\frac{3}{2}}\|\krnl_\kappa*f\|_{L^p(\Rdrei)},\nonumber\\
\left(\int_\Rdrei\left|\dff _{\tilde x}\left(\krnl_1*\frac{f}{\kappa}\right)(\tilde x)\right|^p\dd\tilde x\right)^\frac{1}{p}&=\kappa^{\frac{3}{2}-\frac{1}{2}}\|\dff _x(\krnl_\kappa*f)\|_{L^p(\Rdrei)},\nonumber\\
\left(\int_\Rdrei\left|\dff ^2_{\tilde x}\left(\krnl_1*\frac{f}{\kappa}\right)(\tilde x)\right|^p\dd\tilde x\right)^\frac{1}{p}&=\kappa^{\frac{3}{2}-1}\|\dff ^2_x(\krnl_\kappa*f)\|_{L^p(\Rdrei)},\nonumber
\end{align}
which yields \eqref{eq:reg_yuk} after insertion in \eqref{eq:stein} and simplification.
\item The first statement is a straight-forward consequence of the integral-type representation of $\krnl_\kappa$ in \cite[Thm. 6.23]{lieb2001}. To prove the first claim of the second statement, we proceed by induction.
 For $k=1$, equation \eqref{eq:yuccaiter} is just the definition of $\yucca_\sigma$.
 Now assume that \eqref{eq:yuccaiter} holds for some $k\in\N$.
 Using the semigroup property $\heat_{t_1+t_2}=\heat_{t_1} *\heat_{t_2}$ of the heat kernel,
 we find that
 \begin{align*}
   \yucca_\sigma^{k+1}
   &= \int_0^\infty \int_0^\infty \heat_{\sigma r_1} *\heat_{\sigma r_2}e^{-r_1}r_2^{k-1}e^{-r_2}\frac{\dd r_1\dd r_2}{\Gamma(k)} \\
   &= \int_0^\infty\int_0^\infty \heat_{\sigma(r_1+r_2)}e^{-(r_1+r_2)} r_2^{k-1}\frac{\dd r_1\dd r_2}{\Gamma(k)}.
 \end{align*}
 Now perform a change of variables
 \begin{align*}
   r:=r_1+r_2,\ s:=r_2,
 \end{align*}
 which is of determinant $1$ and leads to
 \begin{align*}
   \yucca_\sigma^{k+1}
   = \int_0^\infty \heat_{\sigma r}e^{-r}\bigg(\int_0^r s^{k-1}\dd s\bigg)\frac{\dd r}{\Gamma(k)}
   = \int_0^\infty \heat_{\sigma r}\frac{e^{-r}r^k\dd r}{k\Gamma(k)},
 \end{align*}
 which is \eqref{eq:yuccaiter} with $k+1$ in place of $k$, using that $k\Gamma(k)=\Gamma(k+1)$.

 For \eqref{eq:yuccaest}, first observe that $r\mapsto r^{k-1}e^{-r}/\Gamma(k)$ defines a probability density on $\R_+$.
 We can thus apply Jensen's inequality to obtain
 \begin{align}
   \label{eq:ghj}
   \|\dff\yucca_\sigma^k\|_{L^q(\Rdrei)}
   \le \int_0^\infty \|\dff\heat_{\sigma r}\|_{L^q(\Rdrei)}\frac{r^{k-1}e^{-r}\dd r}{\Gamma(k)} .
\end{align}
 The $L^q$-norm of $\dff\heat_{\sigma r}$ is easily evaluated using its definition,
 \begin{align*}
   \|\dff\heat_{\sigma r}\|_{L^q(\Rdrei)}
   &= (\sigma r)^{-3/2}\bigg(\int_\Rdrei |\dff_\xi\heat_1\big((\sigma r)^{-1/2}\xi\big)|^q\dd\xi\bigg)^{1/q} \\
   & = (\sigma r)^{-3/2}\bigg(\int_\Rdrei |(\sigma r)^{-1/2}\dff_\zeta\heat_1(\zeta)|^q\,(\sigma r)^{3/2}\dd\zeta\bigg)^{1/q}
   = (\sigma r)^{-Q}\|\dff\heat_1\|_{L^q(\Rdrei)}.
 \end{align*}
 By definition of the gamma function, we thus obtain from \eqref{eq:ghj} that
 \begin{align*}
   \|\dff\yucca_\sigma^k\|_{L^q(\Rdrei)} \le \|\dff\heat_1\|_{L^q(\Rdrei)} \frac{\Gamma(k-Q)}{\Gamma(k)}\sigma^{-Q}.
 \end{align*}
 For further estimation, observe that the sequence $(a_k)_{k\in\N}$ with $a_k=k^Q\Gamma(k-Q)/\Gamma(k)$ is monotonically decreasing (to zero).
 Indeed,
 \begin{align*}
   \frac{a_{k+1}}{a_k} = \frac{(k+1)k^Q\,(k-Q)\Gamma(k-Q)\Gamma(k)}{k^Q\,k\Gamma(k)\Gamma(k-Q)} = \Big(1+\frac1k\Big)^Q\Big(1-\frac{Q}k\Big)
 \end{align*}
 is always smaller than $1$ since $\xi\mapsto(1+\xi)^{-Q}$ is convex.
 Therefore $a_k\le a_1$ for all $k\in\N$, and so \eqref{eq:yuccaest} follows with $Y_q:=\Gamma(1-Q)\|\dff\heat_1\|_{L^q(\Rdrei)}$.\qed
\end{enumerate}

\section{H\"older estimate for the kernel $\krnl_\kappa$}\label{app:holder}

As a preparation, we calculate the derivatives of $\krnl_\kappa$ in $\Rdrei\backslash\{0\}$. For all $i,j,k\in\{1,2,3\}$, one has
\begin{align*}
\partial_i \krnl_\kappa(x)&=-\frac{1}{4\pi}\frac{\exp(-\sqrt{\kappa} |x|)}{|x|^3}(\sqrt{\kappa} |x|+1)x_i,\\
\partial_i\partial_j \krnl_\kappa(x)&=-\frac{1}{4\pi}\exp(-\sqrt{\kappa} |x|)\left[\left(\frac{\kappa}{|x|^3}+\frac{3\sqrt{\kappa}}{|x|^4}+\frac{3}{|x|^5}\right)x_i x_j-\left(\frac{\sqrt{\kappa}}{|x|^2}+\frac{1}{|x|^3}\right)\delta_{ij}\right],\\
\partial_i\partial_j\partial_k \krnl_\kappa(x)&=-\frac{1}{4\pi}\exp(-\sqrt{\kappa} |x|)\frac{-\sqrt{\kappa} x_k}{|x|}\left[\left(\frac{\kappa}{|x|^3}+\frac{3\sqrt{\kappa}}{|x|^4}+\frac{3}{|x|^5}\right)x_i x_j-\left(\frac{\sqrt{\kappa}}{|x|^2}+\frac{1}{|x|^3}\right)\delta_{ij}\right]\nonumber\\
&-\frac{1}{4\pi}\exp(-\sqrt{\kappa} |x|)\left[\left(-\frac{3\kappa}{|x|^4}-\frac{12\sqrt{\kappa}}{|x|^5}-\frac{15}{|x|^6}\right)\frac{x_i x_j x_k}{|x|}\right.\nonumber\\
&\left.+\delta_{ij}\left(\frac{2\sqrt{\kappa}}{|x|^3}+\frac{3}{|x|^4}\right)\frac{x_k}{|x|}+\left(\frac{\kappa}{|x|^3}+\frac{3\sqrt{\kappa}}{|x|^4}+\frac{3}{|x|^5}\right)(\delta_{ik}x_j+\delta_{jk}x_i)\right],
\end{align*}
where $\delta_{ij}$ denotes Kronecker's delta.

We prove the following
\begin{lemma}[H\"older estimate for second derivative]\label{lemma:holder}
Let $f\in C^{0,\alpha}(\Rdrei)$ for some $\alpha\in(0,1)$ and assume that it is of compact support. Then, there exists $C>0$ such that for all $i,j\in\{1,2,3\}$ the following estimate holds:
\begin{align*}
\left[\partial_i\partial_j (\krnl_\kappa*f)\right]_{C^{0,\alpha}(\Rdrei)}&\le C[f]_{C^{0,\alpha}(\Rdrei)}.
\end{align*}
Here,
\begin{align*}
[g]_{C^{0,\alpha}(\Rdrei)}:=\sup_{x,y\in\R^3,\, x\neq y}\frac{|g(x)-g(y)|}{|x-y|}
\end{align*}
denotes the \emph{H\"older seminorm} of $g:\,\R^3\to\R$.
\end{lemma}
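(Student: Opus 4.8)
The plan is to split off the Calder\'on--Zygmund part of $\partial_i\partial_j\krnl_\kappa$ and run the classical Schauder argument on it, all other contributions being manifestly harmless. From the explicit derivative formulas recorded above one writes, on $\Rdrei\setminus\{0\}$, $\partial_i\partial_j\krnl_\kappa = A_{ij} + B_{ij}$ with $A_{ij}(z) := \tfrac1{4\pi}e^{-\sqrt\kappa|z|}\big(3z_iz_j|z|^{-5} - \delta_{ij}|z|^{-3}\big)$ carrying the top-order singularity and $B_{ij}$ the lower-order terms. Three elementary facts will be used repeatedly: $|A_{ij}(z)|\le C|z|^{-3}e^{-\sqrt\kappa|z|}$ and $|\grd A_{ij}(z)|\le C(|z|^{-4}+1)e^{-\sqrt\kappa|z|}$; the spherical averages of $A_{ij}$ vanish, $\int_{|z|=r}A_{ij}(z)\,\dd S_z = 0$ for every $r>0$ (because $\int_{S^2}\omega_i\omega_j\,\dd\omega = \tfrac{4\pi}{3}\delta_{ij}$), whence also $\int_{\Rdrei\setminus B_r(0)}A_{ij}\,\dd z = 0$ for every $r>0$; and $|B_{ij}(z)|\le C(|z|^{-1}+|z|^{-2})e^{-\sqrt\kappa|z|}$, so $B_{ij}\in L^1(\Rdrei)$.

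First I would establish the pointwise representation. Starting from $\partial_j(\krnl_\kappa*f) = (\partial_j\krnl_\kappa)*f$ (legitimate since $\partial_j\krnl_\kappa\in L^1_{\mathrm{loc}}(\Rdrei)$ with exponential decay), differentiating once more under a regularization $\{|x-y|>\eps\}$ and letting $\eps\downarrow0$ --- a classical argument for the Newtonian potential (cf.\ \cite{stein1971}), the decay of $\krnl_\kappa$ and its derivatives killing all contributions from infinity --- one obtains, for $f\in C^{0,\alpha}(\Rdrei)$ with compact support,
\[ \partial_i\partial_j(\krnl_\kappa*f)(x) = g_{ij}(x) + (B_{ij}*f)(x) + c_{ij}\,f(x), \qquad g_{ij}(x) := \int_{\Rdrei}A_{ij}(x-y)\big(f(y)-f(x)\big)\,\dd y, \]
with suitable constants $c_{ij}$; the $g_{ij}$-integral converges absolutely because $|A_{ij}(x-y)(f(y)-f(x))|\le C[f]_{C^{0,\alpha}}\,|x-y|^{\alpha-3}e^{-\sqrt\kappa|x-y|}$, and it is the vanishing of the spherical averages that turns the principal value into this ordinary integral. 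The last two summands are harmless: from $|f(x-z)-f(x'-z)|\le[f]_{C^{0,\alpha}}|x-x'|^\alpha$ one reads off $[B_{ij}*f]_{C^{0,\alpha}}\le\|B_{ij}\|_{L^1(\Rdrei)}[f]_{C^{0,\alpha}}$ and $[c_{ij}f]_{C^{0,\alpha}} = |c_{ij}|\,[f]_{C^{0,\alpha}}$.

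It then remains to bound the H\"older seminorm of $g_{ij}$. Fix $x_1\neq x_2$, set $\delta:=|x_1-x_2|$; for $\delta\ge1$ it is quickest to use $|g_{ij}(x_1)-g_{ij}(x_2)|\le 2\|g_{ij}\|_{L^\infty(\Rdrei)}$ together with $\|g_{ij}\|_{L^\infty(\Rdrei)}\le C[f]_{C^{0,\alpha}}\int_{\Rdrei}|z|^{\alpha-3}e^{-\sqrt\kappa|z|}\,\dd z\le C[f]_{C^{0,\alpha}}$, so assume $\delta<1$. Put $\bar x:=\tfrac12(x_1+x_2)$, $D:=B_{2\delta}(\bar x)$, and split $g_{ij}(x_1)-g_{ij}(x_2)=I_{\mathrm{in}}+I_{\mathrm{out}}$ according to whether $y\in D$ or $y\notin D$. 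Since $D$ lies inside the balls of radius $\tfrac52\delta$ about $x_1$ and about $x_2$, each of the four terms making up $I_{\mathrm{in}}$ is $\le C[f]_{C^{0,\alpha}}\int_{|z|\le5\delta/2}|z|^{\alpha-3}\,\dd z = C[f]_{C^{0,\alpha}}\delta^\alpha$. On $\Rdrei\setminus D$ one has $|x_1-y|,|x_2-y|\ge\tfrac32\delta$, so $\delta\le\tfrac23|x_1-y|$; writing the integrand as $\big[A_{ij}(x_1-y)-A_{ij}(x_2-y)\big]\big(f(y)-f(x_1)\big)+A_{ij}(x_2-y)\big(f(x_2)-f(x_1)\big)$, the first term is handled by the mean value theorem and the gradient bound above, yielding $C[f]_{C^{0,\alpha}}\,\delta\int_{c\delta}^\infty(r^{\alpha-2}+r^{\alpha+2})e^{-cr}\,\dd r\le C[f]_{C^{0,\alpha}}(\delta^\alpha+\delta)\le C[f]_{C^{0,\alpha}}\delta^\alpha$ (the exponent $\alpha-2<-1$ produces the factor $\delta^{\alpha-1}$ that cancels the prefactor $\delta$); for the second term $|f(x_2)-f(x_1)|\le[f]_{C^{0,\alpha}}\delta^\alpha$, while after the substitution $z=x_2-y$ the remaining integral equals $\int_{\Rdrei\setminus B_{2\delta}(w_0)}A_{ij}(z)\,\dd z$ with $|w_0|=\delta/2$, which --- since $\int_{\Rdrei\setminus B_{2\delta}(0)}A_{ij}=0$ --- differs from $0$ only by the integral of $A_{ij}$ over a set on which $|z|\ge\tfrac32\delta$ and of volume $\le C\delta^3$, hence is $O(1)$. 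Collecting these bounds gives $|g_{ij}(x_1)-g_{ij}(x_2)|\le C(\alpha,\kappa)[f]_{C^{0,\alpha}}\delta^\alpha$, which together with the previous paragraph proves the lemma.

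The main obstacle I anticipate is the \emph{far} part of the H\"older difference, on $\Rdrei\setminus D$: organizing the cancellation $\int_{\Rdrei\setminus B_r(0)}A_{ij}=0$ for the off-center ball $B_{2\delta}(w_0)$, and --- more importantly --- checking that the final constant does \emph{not} depend on $\mathrm{supp}\,f$. It does not, because the bound $|f(y)-f(x_1)|\le[f]_{C^{0,\alpha}}|y-x_1|^\alpha$ is available for \emph{all} $y$ (whereas $\|f\|_{L^\infty}$ is not controlled by the seminorm), and the exponential decay of $A_{ij}$ and $\grd A_{ij}$ --- which is where $\kappa>0$ enters --- keeps the weights $r^{\alpha+2}e^{-cr}$ integrable at infinity.
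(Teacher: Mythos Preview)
Your proof is correct and follows a genuinely different organization from the paper's. The paper adapts the Lieb--Loss argument for the Newtonian potential directly to $\krnl_\kappa$: it derives a representation that splits the convolution integral at radius $1$ (so that on $B_1(x)$ the integrand carries the subtraction $f(x)-f(y)$ and on $\Rdrei\setminus B_1(x)$ it does not), and then estimates the H\"older difference by cutting the inner ball at $4|x-z|$ and performing a somewhat lengthy case analysis on the pieces $E,G,E',G'$. You instead split the \emph{kernel} as $A_{ij}+B_{ij}$, isolating the homogeneous Calder\'on--Zygmund part (for which the vanishing spherical averages $\int_{|z|=r}A_{ij}=0$ let you replace the principal value by an absolutely convergent integral with the subtraction $f(y)-f(x)$ built in globally), and pushing the $\kappa$-dependent lower-order terms into the integrable remainder $B_{ij}$, where $[B_{ij}*f]_{C^{0,\alpha}}\le\|B_{ij}\|_{L^1}[f]_{C^{0,\alpha}}$ is immediate.

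What this buys you: the H\"older estimate for $g_{ij}$ is then the textbook Schauder argument with a single near/far split at $B_{2\delta}(\bar x)$, and the cancellation $\int_{\Rdrei\setminus B_r(0)}A_{ij}=0$ handles the ``dangerous'' term $\int_{\Rdrei\setminus D}A_{ij}(x_2-y)\,\dd y$ cleanly via the symmetric-difference trick. The paper's route avoids introducing the split $A_{ij}+B_{ij}$ but pays for it with more explicit boundary computations and the four-set decomposition $E\cup G\cup E'\cup G'$. Both arguments ultimately rest on the same two ingredients --- the $|z|^{-3}$/$|z|^{-4}$ bounds and a cancellation at fixed radius --- and yield constants that depend only on $\alpha$ and $\kappa$, not on $\mathrm{supp}\,f$.
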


\begin{proof}
This result is an extension of the respective result for Poisson's equation (corresponding to $\kappa=0$) proved by Lieb and Loss \cite[Thm. 10.3]{lieb2001}. Their method of proof is adapted here. In the following, $C,\tilde C$ denote generic nonnegative constants.

The following holds for arbitrary test functions $\psi\in C^\infty_c(\Rdrei)$:
\begin{align*}
-\int_\Rdrei (\partial_j \psi)(x)(\partial_i v)(x)\dd x=\int_\Rdrei f(y)\int_\Rdrei (\partial_j \psi)(x)\partial_{x_i}\krnl_\kappa(x-y)\dd x\,\dd y,
\end{align*}
which can be rewritten by the dominated convergence theorem and integration by parts as
\begin{align*}
&\int_\Rdrei f(y)\int_\Rdrei (\partial_j \psi)(x)\partial_{x_i}\krnl_\kappa(x-y)\dd x\,\dd y\\&=\lim_{\delta\to 0}\int_\Rdrei f(y)\int_{\Rdrei\backslash B_\delta(y)} (\partial_j \psi)(x)\partial_{x_i}\krnl_\kappa(x-y)\dd x\,\dd y\nonumber\\
&=\lim_{\delta\to 0}\int_\Rdrei f(y)\left[-\int_{\partial B_\delta(y)}\psi(x)\partial_{x_i}\krnl_\kappa(x-y)e_j\cdot \nu_{y,\delta}(x)\dd S(x)\right.\nonumber\\&\left.-\int_{\Rdrei\backslash B_\delta(y)} \psi(x)\partial_{x_i}\partial_{x_j}\krnl_\kappa(x-y)\dd x\right]\dd y,
\end{align*}
where $e_j$ is the $j$-th unit vector and $\nu_{y,\delta}(x)=\frac{x-y}{\delta}$ is the unit outward normal vector in $x$ on the sphere $\partial B_\delta(y)$.
The first part can be simplified explicitly by the transformation $z:=\frac{x-y}{\delta}$:
\begin{align*}
&-\int_{\partial B_\delta(y)}\psi(x)\partial_{x_i}\krnl_\kappa(x-y)e_j\cdot \nu_{y,\delta}(x)\dd S(x)\\&=\frac{1}{4\pi}\int_{\partial B_1(0)}\psi(\delta z+y)\exp(-\sqrt{\kappa}\delta)(\sqrt{\kappa}\delta+1)z_i z_j \dd S(z),
\end{align*}
which converges as $\delta\to 0$ to $\psi(y)\frac{\delta_{ij}}{3}$.
For the second part, we split the domain of integration $\Rdrei\backslash B_\delta(y)$ in two parts:
\begin{align*}
&\int_{\Rdrei\backslash B_\delta(y)}\psi(x)\partial_{x_i}\partial_{x_j}\krnl_\kappa(x-y)\dd x\\&=\int_{\Rdrei\backslash B_\delta(1)}\psi(x)\partial_{x_i}\partial_{x_j}\krnl_\kappa(x-y)\dd x+\int_{\{1\ge|x-y|\ge \delta\}}\psi(x)\partial_{x_i}\partial_{x_j}\krnl_\kappa(x-y)\dd x.
\end{align*}
We use integration by parts to insert convenient additional terms:
\begin{align*}
&\int_{\Rdrei\backslash B_\delta(1)}\psi(x)\partial_{x_i}\partial_{x_j}\krnl_\kappa(x-y)\dd x+\int_{\{1\ge|x-y|\ge \delta\}}\psi(x)\partial_{x_i}\partial_{x_j}\krnl_\kappa(x-y)\dd x=\nonumber\\
&=\int_{\Rdrei\backslash B_\delta(1)}\psi(x)\partial_{x_i}\partial_{x_j}\krnl_\kappa(x-y)\dd x+\int_{\{1\ge|x-y|\ge \delta\}}\psi(x)\partial_{x_i}\partial_{x_j}\krnl_\kappa(x-y)\dd x\nonumber\\
&-\int_{\{1\ge|x-y|\ge \delta\}}\psi(y)\partial_{x_i}\partial_{x_j}\krnl_\kappa(x-y)\dd x\nonumber\\
&+\int_{\partial B_1(y)}\psi(y)\partial_{x_j} \krnl_\kappa(x-y)e_i\cdot \nu_{y,1}(x)\dd S(x)-\int_{\partial B_\delta(y)}\psi(y)\partial_{x_j} \krnl_\kappa(x-y)e_i\cdot \nu_{y,\delta}(x)\dd S(x).
\end{align*}
Now we calculate again explicitly and obtain in the limit $\delta\to 0$:
\begin{align*}
&\int_{\partial B_1(y)}\psi(y)\partial_{x_j} \krnl_\kappa(x-y)e_i\cdot \nu_{y,1}(x)\dd S(x)-\int_{\partial B_\delta(y)}\psi(y)\partial_{x_j} \krnl_\kappa(x-y)e_i\cdot \nu_{y,\delta}(x)\dd S(x)\nonumber\\
&\longrightarrow -\frac{1}{3}\delta_{ij}[\exp(-\sqrt{\kappa})(\sqrt{\kappa}+1)-1]\psi(y).
\end{align*}

In summary, one gets
\begin{align*}
&-\int_\Rdrei (\partial_j \psi)(x)(\partial_i v)(x)\dd x\nonumber\\
&=\int_\Rdrei\psi(x)\left[\frac{1}{3}\delta_{ij}f(x)\exp(-\sqrt{\kappa})(\sqrt{\kappa}+1)+\int_{\Rdrei\backslash B_1(x)}f(y)\partial_{x_i}\partial_{x_j}\krnl_\kappa(x-y)\dd y\right.\nonumber\\
&\left.+\lim_{\delta\to 0}\int_{\{1\ge|x-y|\ge \delta\}} (f(x)-f(y))\partial_{x_i}\partial_{x_j}\krnl_\kappa(x-y)\dd y\right]\dd x.
\end{align*}

From $\alpha$-H\"older continuity of $f$, we conclude that, independent of $\delta$,
\begin{align*}
 \mathbf{1}_{{\{1\ge|x-y|\ge \delta\}}}(y)\left|[f(x)-f(y)]\partial_{x_i}\partial_{x_j}\krnl_\kappa(x-y)\right|\le C|x-y|^{\alpha-3},
\end{align*}
which is integrable as $\alpha-3+2>-1$. So, using again the dominated convergence theorem, we have, with \cite[Thm. 6.10]{lieb2001},
\begin{align}
(\partial_i\partial_j v)(x)&=\frac{1}{3}\delta_{ij}\exp(-\sqrt{\kappa})(\sqrt{\kappa}+1)\nonumber\\&+\int_{\Rdrei\backslash B_1(x)}f(y)\partial_{x_i}\partial_{x_j}\krnl_\kappa(x-y)\dd y+\int_{B_1(x)}[f(x)-f(y)]\partial_{x_i}\partial_{x_j}\krnl_\kappa(x-y)\dd y.\label{eq:D2v}
\end{align}
Obviously, the first term in \eqref{eq:D2v} is H\"older-continuous. For the second term in \eqref{eq:D2v}, we obtain for all $x,z\in \Rdrei$, $x\neq z$:
\begin{align*}
&\left|\int_{\Rdrei\backslash B_1(x)}f(y)\partial_{x_i}\partial_{x_j}\krnl_\kappa(x-y)\dd y-\int_{\Rdrei\backslash B_1(z)}f(y)\partial_{z_i}\partial_{z_j}\krnl_\kappa(z-y)\dd y \right|\nonumber\\
&=\left|\int_{B_1(0)}[f(z-a)-f(x-a)]\partial_{a_i}\partial_{a_j}\krnl_\kappa(a)\dd a\right|,
\end{align*}
by the transformation $a:=x-y$ in the first and $a:=z-y$ in the second integral. From $\alpha$-H\"older continuity of $f$, we get the estimate
\begin{align*}
\left|\int_{B_1(0)}[f(z-a)-f(x-a)]\partial_{a_i}\partial_{a_j}\krnl_\kappa(a)\dd a\right|&\le C|x-z|^\alpha \int_{\Rdrei\backslash B_1(x)}|\partial_{a_i}\partial_{a_j}\krnl_\kappa(a)\dd a|,
\end{align*}
where the integral on the r.h.s. is finite because $\partial_{a_i}\partial_{a_j}\krnl_\kappa(a)$ behaves as $r^{-1}\exp(-r)$ for $r\to\infty$, which is integrable.

The same integral transformation yields for the third term in \eqref{eq:D2v}:
\begin{align*}
&\left|\int_{B_1(x)}[f(x)-f(y)]\partial_{x_i}\partial_{x_j}\krnl_\kappa(x-y)\dd y-\int_{B_1(z)}[f(z)-f(y)]\partial_{z_i}\partial_{z_j}\krnl_\kappa(z-y)\dd y\right|\nonumber\\
&=\left|\int_{B_1(0)}[f(z)-f(z-a)-f(x)+f(x-a)]\partial_{a_i}\partial_{a_j}\krnl_\kappa(a)\dd a\right|.
\end{align*}
We now proceed as in \cite{lieb2001} and write $B_1(0)=A\cup B$ with
\begin{align*}
A&:=\{a:\,0\le|a|<4|x-z|\},\\
B&:=\{a:\,4|x-z|<|a|<1\},
\end{align*}
where $B=\emptyset$ for $|x-z|\ge \frac{1}{4}$, and calculate, using that $|\partial_{a_i}\partial_{a_j}\krnl_\kappa(a)|\le C|a|^{-3}$:
\begin{align*}
&\left|\int_{A}[f(z)-f(z-a)-f(x)+f(x-a)]\partial_{a_i}\partial_{a_j}\krnl_\kappa(a)\dd a\right|\le \int_A 2C|a|^{\alpha-3}\dd a=\tilde C|x-z|^\alpha.
\end{align*}

It remains to consider the case $|x-z|< \frac{1}{4}$. One has
\begin{align*}
&\left|\int_B[f(z)-f(x)]\partial_{a_i}\partial_{a_j}\krnl_\kappa(a) \dd a\right|=\left|\int_{\partial B}[f(z)-f(x)]\partial_{a_j}\krnl_\kappa(a)e_i\cdot \nu(a)\dd S(a)\right|,
\end{align*}
and by similar arguments as above,
\begin{align*}
&\left|\int_{\partial B}[f(z)-f(x)]\partial_{a_j}\krnl_\kappa(a)e_i\cdot \nu(a)\dd S(a)\right|\nonumber\\
&=\frac{1}{3}\delta_{ij}|f(z)-f(x)||\exp(-\sqrt{\kappa})(\sqrt{\kappa}+1)-\exp(-4\sqrt{\kappa}|x-z|)(4\sqrt{\kappa}|x-z|+1)|.
\end{align*}
Note that the real-valued map $[0,\infty)\ni r\mapsto \exp(-\sqrt{\kappa} r)(\sqrt{\kappa} r+1)$ is monotonically decreasing. This yields
\begin{align*}
&\left|\int_B[f(z)-f(x)]\partial_{a_i}\partial_{a_j}\krnl_\kappa(a) \dd a\right|\le C|z-x|^\alpha.
\end{align*}

By the transformations $b:=x-a-z$ and $b:=-a$, we get
\begin{align}
&\left|\int_B [f(x-a)-f(z-a)]\partial_{a_i}\partial_{a_j}\krnl_\kappa(a)\dd a\right|\nonumber\\&=\left|\int_B f(z+b)\partial_{b_i}\partial_{b_j}\krnl_\kappa(b)\dd b-\int_D f(b+z)\partial_{b_i}\partial_{b_j}\krnl_\kappa(b-x+z)\dd b\right|,\label{eq:diff}
\end{align}
with $D:=\{b:\,4|x-z|<|b-x+z|<1\}$.

Note that
\begin{align*}
\int_B\partial_{b_i}\partial_{b_j}\krnl_\kappa(b)\dd b&=\int_D\partial_{b_i}\partial_{b_j}\krnl_\kappa(b-x+z)\dd b.
\end{align*}
This enables us to rewrite \eqref{eq:diff} as follows:
\begin{align}
&\left|\int_B f(z+b)\partial_{b_i}\partial_{b_j}\krnl_\kappa(b)\dd b-\int_D f(b+z)\partial_{b_i}\partial_{b_j}\krnl_\kappa(b-x+z)\dd b\right|\nonumber\\
&\left|\int_B [f(z+b)-f(z)]\partial_{b_i}\partial_{b_j}\krnl_\kappa(b)\dd b-\int_D [f(z+b)-f(z)]\partial_{b_i}\partial_{b_j}\krnl_\kappa(b-x+z)\dd b\right|.\label{eq:diff2}
\end{align}

We consider \eqref{eq:diff2} separately on the sets $B\cap D$, $B\backslash D$ and $D\backslash B$.

Note that, by the triangular inequality,  $B\cap D\subset \{b:\,3|x-z|<|b|<1+|x-z|\}$ and by Taylor's theorem
\begin{align*}
(\partial_{b_i}\partial_{b_j}\krnl_\kappa)(b)-(\partial_{b_i}\partial_{b_j}\krnl_\kappa)(b-x+z)&=\sum_{k=1}^3 (\partial_k\partial_i\partial_j \krnl_\kappa)(b^*)(x_k-z_k),
\end{align*}
for some $b^*=b-\beta (x-z)$ with $\beta\in(0,1)$. Therefore, one has by the triangular inequality, $|b^*|\ge |b|-\beta|x-z|\ge (1-\frac{\beta}{3})|b|\ge \frac{2}{3}|b|$ on $B\cap D$ and consequently
\begin{align*}
|(\partial_{b_i}\partial_{b_j}\krnl_\kappa)(b)-(\partial_{b_i}\partial_{b_j}\krnl_\kappa)(b-x+z)|&\le C|b^*|^{-4}|x-z|\le \tilde C |b|^{-4}|x-z|.
\end{align*}
This entails us to estimate
\begin{align*}
&\left|\int_{B\cap D} \left[f(z+b)-f(z)\right]\left[\partial_{b_i}\partial_{b_j}\krnl_\kappa(b)-\partial_{b_i}\partial_{b_j}\krnl_\kappa(b-x+z)\right]\dd b\right|\\&\le C|x-z|\int_{3|x-z|}^{1+|x-z|}r^{-4+\alpha+2}\dd r\nonumber\\
&\le \frac{C|x-z|}{1-\alpha}\left[(3|x-z|)^{\alpha-1}-(1+|x-z|)^{\alpha-1}\right]\frac{\tilde C}{1-\alpha}|x-z|^\alpha.
\end{align*}
For the remaining terms, we split up as in \cite{lieb2001}:
\begin{align*}
B\backslash D&\subset E\cup G,\\
D\backslash B&\subset E'\cup G',
\end{align*}
where
\begin{align*}
E&:=\{b:\,4|x-z|<|b|\le 5|x-z|\},\\
G&:=\{b:\,1-|x-z|\le|b|<1\},\\
E'&:=\{b:\,4|x-z|<|b-x+z|\le 5|x-z|\},\\
G'&:=\{b:\,1-|x-z|\le |b-x+z|<1\}.
\end{align*}

Consider at first the real-valued map $[0,\frac{1}{4}]\ni s\mapsto (1-s)^\beta$ for arbitrary $\beta>0$. Obviously, it is continuously differentiable and therefore $\alpha$-H\"older continuous because its domain of definition is compact. Hence, the following holds for all $0\le s\le \frac{1}{4}$:
\begin{align}
1-(1-s)^\beta&=(1-0)^\beta-(1-s)^\beta\le C s^\alpha.\label{eq:interg}
\end{align}

Now, we estimate the integral on $B\backslash D$, where we use again the estimate $|\partial_{a_i}\partial_{a_j}\krnl_\kappa(a)|\le C|a|^{-3}$:

\begin{align*}
&\left|\int_{B\backslash D} [f(z+b)-f(z)]\partial_{b_i}\partial_{b_j}\krnl_\kappa(b)\dd b\right|\le C\left(\int_{4|x-z|}^{5|x-z|}r^{\alpha-3+2}\dd r+\int_{1-|x-z|}^{1}r^{\alpha-3+2}\dd r\right)\nonumber\\
&=\frac{C}{\alpha}\left[(5|x-z|)^\alpha-(4|x-z|)^\alpha+1-(1-|x-z|)^\alpha\right]\le \frac{C}{\alpha}(5^\alpha+\tilde C)|x-z|^\alpha,
\end{align*}
where we have used \eqref{eq:interg} for $\beta:=\alpha$ in the last step.

For the remaining integral on $D\backslash B$, we consider the domains $E'$ and $G'$ separately and note at first that, using the triangular inequality, $E'\subset \{0<|b|\le 6|x-z|\}$. Subsequently, this yields that $|b-x+z|^{-3}<(4|x-z|)^{-3}\le C|b|^{-3}$ on $E'$. Hence, by the estimate $|\partial_{a_i}\partial_{a_j}\krnl_\kappa(a)|\le C|a|^{-3}$, the following holds:
\begin{align*}
&\int_{E'} \left|[f(z+b)-f(z)]\partial_{b_i}\partial_{b_j}\krnl_\kappa(b-x+z)\right|\dd b\le C\int_0^{6|x-z|}r^{\alpha-3+2}dr=\tilde C |x-z|^\alpha.
\end{align*}
On $G'$, one has $|b-x+z|\ge 1-|x-z|>\frac{3}{4}$. Consequently, it holds that
\begin{align*}
&\int_{G'} \left|[f(z+b)-f(z)]\partial_{b_i}\partial_{b_j}\krnl_\kappa(b-x+z)\right|\dd b\le C\left(\frac{3}{4}\right)^{-3}\int_{1-|x-z|}^1r^{\alpha+2}\dd r\nonumber\\
&=\tilde C \left[1-(1-|x-z|)^{3+\alpha}\right]\le \tilde C|x-z|^\alpha,
\end{align*}
where we have used \eqref{eq:interg} for $\beta:=3+\alpha$ in the last step. Together,
\begin{align*}
\left|\int_{D\backslash B} [f(z+b)-f(z)]\partial_{b_i}\partial_{b_j}\krnl_\kappa(b-x+z)\dd b\right|&\le C|x-z|^\alpha,
\end{align*}
and the assertion is proved.
\end{proof}

\bibliographystyle{abbrv}
\bibliography{ref3.bib}
\end{document}